\theoremstyle{plain}
\theoremstyle{definition}
\theoremstyle{remark}
\newtheorem{rem}{Remark}
\newcommand{\UE}{\overline{U}_e}
\newcommand{\UEN}{\overline{U}\hspace{-0.07cm}_e\hspace{-0.08cm}^{\natural}}
\newcommand{\UHN}{\overline{U}\hspace{-0.07cm}_h\hspace{-0.08cm}^{\natural}}
\newcommand{\UJN}{\overline{U}\hspace{-0.07cm}_{h_j}\hspace{-0.2cm}^{\natural}}
\newcommand{\os}{\omega}
\newcommand{\skw}{{\rm skew}}
\def\dd{\displaystyle}
\title{ A geometrically nonlinear Cosserat (micropolar) curvy shell model via Gamma convergence}
\author{ Maryam Mohammadi Saem\thanks{Maryam Mohammadi Saem,  \ \  Lehrstuhl f\"{u}r Nichtlineare Analysis und Modellierung, Fakult\"{a}t f\"{u}r
		Mathematik, Universit\"{a}t Duisburg-Essen,  Thea-Leymann Str. 9, 45127 Essen, Germany, email: maryam.mohammadi-saem@uni-due.de} \quad and \quad Ionel-Dumitrel Ghiba\thanks{ Ionel-Dumitrel Ghiba,  \  {Faculty} of Mathematics,  Alexandru Ioan Cuza University of Ia\c si,  Blvd.
		Carol I, no. 11, 700506 Ia\c si,
		Romania; and  Octav Mayer Institute of Mathematics of the
		Romanian Academy, Ia\c si Branch,  700505 Ia\c si, email:  dumitrel.ghiba@uaic.ro} \quad
	and  \\    Patrizio Neff\,\thanks{Patrizio Neff,  \ \ Head of Lehrstuhl f\"{u}r Nichtlineare Analysis und Modellierung, Fakult\"{a}t f\"{u}r
		Mathematik, Universit\"{a}t Duisburg-Essen,  Thea-Leymann Str. 9, 45127 Essen, Germany, email: patrizio.neff@uni-due.de}
}
\date{\today}
\begin{document}
\maketitle

 \begin{abstract}
 	\noindent
 Using $\Gamma$-convergence arguments, we construct a nonlinear membrane-like Cosserat shell model on a curvy reference configuration starting from a geometrically nonlinear, physically linear three-dimensional isotropic Cosserat model. Even if the theory is of order $O(h)$ in the shell thickness $h$, by comparison to the membrane shell models proposed in classical nonlinear elasticity,  beside the change of metric, the membrane-like Cosserat shell model is still capable to capture the transverse shear deformation and the  {Cosserat}-curvature due to remaining Cosserat effects.  
 %, a fact which is expected to be done in a flexural model obtained via $\Gamma$-convergence.
 We formulate the limit problem  by scaling both unknowns, the deformation and the microrotation tensor, and by expressing the parental three-dimensional Cosserat energy with respect to a fictitious flat configuration. The model obtained via $\Gamma$-convergence is similar to the membrane  {(no  $O(h^3)$ flexural terms, but still depending on the Cosserat-curvature)} Cosserat shell model derived via a derivation approach but these two models do not coincide. Comparisons to other shell models are also included.
 \end{abstract}
 {\textbf{Key words:} dimensional reduction, curved reference configuration, membrane shell model, Gamma-convergence,  nonlinear scaling, microrotations, Cosserat theory, Cosserat shell, micropolar shell, generalized continua, multiplicative split.}
 \\[.65em]
 \noindent {\bf AMS 2010 subject classification: 74A05, 74A60, 74B20, 74G65, 74K20, 74K25, 74Q05}\\
 {\parskip=-0.5mm \tableofcontents}
 
 \section{Introduction}
 If a three-dimensional elastic body is very thin in one direction, it has special load-bearing capacities. Due to the geometry, it is always tempting to try to come up with simplified equations for this situation. The ensuing theory is subsumed under the name shell theory. We speak of a flat shell problem  if the reference configuration is flat, i.e., the undeformed configuration  is given by $\Omega_h=\omega\times\big[-\frac{h}{2},\frac{h}{2}\big]$, with $\omega\subset \mathbb{R}^2$ and $h\ll1$, and of a shell (or curvy shell) if the reference configuration is curvy, in the sense that the undeformed configuration is given by  $\Omega_\xi=\Theta(\Omega_h)$, with $\Theta$ a $C^1$-diffeomorphism $\Theta\col\mathbb{R}^3\rightarrow \mathbb{R}^3$.
  
  There are many different ways to mathematically describe the response of shells and of obtaining two-dimensional field equations. One method is called the \textit{derivation approach}. The idea of this method is reducing the dimension of a given 3 dimensional model to 2 dimensions through physically reasonable constitution assumptions on the kinematics \cite{Koiter69}. The last author has introduced this derivation procedure based on the geometrically nonlinear Cosserat model in his habilitation thesis \cite{neff2004geometricallyhabil,neff2004geometrically}. The other approach is the \textit{intrinsic approach} which from the beginning views the shell as a two-dimensional surface and refers to methods from differential geometry \cite{Altenbach-Erem-11,Altenbach-Erem-Review,green1965general}. The \textit{asymptotic method} seeks, by using the formal expansion of the three-dimensional solution in power series in terms of a small thickness parameter to establish two-dimensional equations. Moreover, the \textit{direct approach} \cite{Naghdi69} assumes that the shell is a two-dimensional medium which has additional extrinsic directors in the concept of a restricted Cosserat surface (\cite{antman2005nonlinear, cohen1966nonlinear1, cohen1966nonlinear, cohen1989mathematical, cosserat1909theorie, ericksen1957exact, green1965general, rubin2013cosserat,birsan2019refined,boyer2017poincare}). Of course, the intrinsic approach is related to the direct approach. More information regarding to this method can be found in \cite{neff2004geometrically, neff2005geometrically, neff2005gamma, neff2007geometrically}.

  One of the most famous shell theories is the \textit{Reissner-Mindlin membrane-bending model} which is an extension of the \textit{Kirchhoff-Love} membrane-bending model \cite{anicic1999formulation} (the Koiter model \cite{anicic2019existence}). The kinematic assumptions in this theory are that straight lines normal to the reference mid-surface remain straight and normal to the mid-surface after deformation. The Reissner-Mindlin theory can be applied for thick plates and it does not require the cross-section to be perpendicular to the axial axes after deformation, i.e. it includes transverse shear. A serious drawback of both these theories is that a geometrically nonlinear, physically linear membrane-bending model is typically not well-posed (\cite{Neff2022}) and needs specific modifications \cite{anicic2018polyconvexity,anicic2019existence} to re-establish  well-posedness.
  
  There is another powerful tool that one can use to perform the dimensional reduction namely $\Gamma$- \textit{convergence}. In this case, a given 3D model is  {dimensionally reduced} via physically reasonable assumptions on the scaling of the energy.

In this regard, one of the first advances in finite elasticity was the derivation of a nonlinear membrane model (energy scaling with $h$) which is given in \cite{le1996membrane}. After that, the idea of $\Gamma$-convergence was developed in \cite{friesecke2003derivation, friesecke2002foppl, friesecke2002theorem, friesecke2006hierarchy}, where different scalings on the applied forces are considered, see also \cite{braun2016existence,schmidt2008linear}.
 
 A notorious property of the $\Gamma$-limit model based on classical elasticity is its de-coupling of the limit into either a membrane-like (scaling with $h$) or bending-like problem (scaling with $h^3$), see e.g. \cite{bartels2022nonlinear,hornung2014derivation}. %The specific choice of scaling for displacement and microrotation can affect the final form of the $\Gamma$-limit. 
 %In \cite{neff2005gamma}, the author used nonlinear scaling for both cases of deformation and microrotation.
 
 In this paper we will use the idea of $\Gamma$-convergence to deduce our two-dimensional curvy shell model from a 3-dimensional geometrically nonlinear Cosserat model (\cite{neff2010reissner}). This work is a challenging extension of the Cosserat membrane $\Gamma$-limit for flat shells, which was previously obtained by Neff and Chelminski in \cite{neff2007geometrically1}{,} to the situation of shells with initial curvature.
 
  The Cosserat model was introduced in 1909 by the Cosserat brothers \cite{cosserat1909theorie,cosserat1991note,cosserat1908theorie}. They imposed a principal of least action, combining the classical deformation $\varphi\col \Omega\subset \R^3\to \R^3$ and an independent triad of orthogonal directors, the microrotation $ {\overline{R}}\col \Omega\subset \R^3\to \SO(3)$. Invariance of the energy under superposed rigid body motions (left-invariance under $\SO(3)$) allowed them to conclude the suitable form of the energy as  {$W=W( {\overline{R}}^T\D \varphi, {\overline{R}^T\partial_{x_1} \overline{R}}, {\overline{R}^T\partial_{x_2} \overline{R}}, {\overline{R}^T\partial_{x_3} \overline{R}})$}. The balance of force equation appears by taking variations w.r.t $\varphi$ and balance of angular momentum follows from taking variations of $ {\overline{R}}\in \SO(3)$. Here, as additional structural assumption we will assume material isotropy, i.e., right-invariance of the energy under $\SO(3)$. In addition we will only consider a physically linear version of the model (quadratic energy in suitable strains)  which allows a complete and definite representation of the energy, see eq. (\ref{energy refeconf}).
  
  	%The Cosserat theory is one of the best known generalized continuum models. In addition to the classical deformation it features an independent microrotation field. Both fields are coupled in a variational problem. ....
  	
 In the geometric description of shells the normal to the midsurface and the tangent plane appear naturally and the Darboux-Frenet-frame can be used. The underlying Cosserat model immediately generalizes this concept in that the additional microrotation field $R$ can replace the Darboux-Frenet frame. The third column of the microrotation matrix $R$ generalizes the normal in a Kirchhoff-Love model and the director in a Reissner-Mindlin model. Note that the Cosserat model allows for global minimizers \cite{neff2004geometricallyhabil}.
 
 Concerning now the thin shell $\Gamma$-limit, we choose the nonlinear scaling and concentrate on a $O(h)$-model, i.e. the membrane response. 
 Since, however, the $3$-D Cosserat model already features curvature terms (derivatives of the microrotations), these terms "survive" the $\Gamma$-limit procedure and scale with $h$, while dedicated bending- {like} terms scaling with $h^3$ do not appear\!\! {\footnote{Observe that the surviving Cosserat curvature is not related to the change of curvature tensor, which measures the change of mean curvature and Gau\ss \ curvature of the surface, see  Acharya \cite{acharya2000nonlinear},   Anicic and Leg\'{e}r \cite{anicic1999formulation} as well as the recent work by Silhavy \cite{vsilhavycurvature} and \cite{GhibaNeffPartIII,GhibaNeffPartIV,GhibaNeffPartV,GhibaNeffPartVI}).}.} \
 % {However, in opposite to classical elasticity, the membrane-like model will be able to incorporate the change of metric, the transverse shear deformation, the bending and the drilling bending, even if it is a $O(h)$ model. The only effect which remains not modelled is the change of curvature which will be possible in a bending-like model (a model $O(h^3)$ model) depending on a measure of the change of curvature (i.e., not ${\rm II}_m-{\rm II}_{y_0}$).} 
 
 The major difficulty  compared to the flat shell $\Gamma$-limit in \cite{neff2007geometrically1} is therefore the incorporation of the curved reference configuration. This problem is solved by introducing a multiplicative decomposition of the appearing fields into elastic and (compatible) permanent parts. The permanent parts encode the geometry of the curved surface given by $\Theta$. In this way, we are able to avoid completely the use of the intrinsic geometry of the curved shell.

 The related Cosserat shell model in \cite{GhibaNeffPartI,GhibaNeffPartII} is obtained by the derivation approach. There, the 2-dimensional model depends on the deformation of the midsurface $m\col \omega\to \R^3$ and the microrotation of the shell $\overline{Q}_{e,s}\col \omega\to \SO(3)$ for $\omega\subset \R^2$, the same as here. The resulting reduced energy contains a membrane part, membrane-bending part and bending-curvature part, while the Cosserat $\Gamma$-limit model obtained in this paper contains only the membrane energy and the curvature energy separately.
 %\footnote{ {In \cite{GhibaNeffPartI}, the membrane-bending part is defined by  the change of metric tensor $\mathcal{G}$,  the transverse shear deformation vector $\mathcal{T}$ and  the change of curvature tensor $ \mathcal{R}-\mathcal{G} \,{\rm L}_{y_0}$. Hence, the name is not appropriate. It should rather be  called \textit{membrane-transverse shear-change of curvature} part of the total energy.
 %Moreover, the bending-curvature tensor is defined by the bending  tensor $ \mathcal{R}$ and the drilling bending $\mathcal{N}$. Hence, the name is also not appropriate. It should rather be   called the \textit{bending-drilling} part of the total energy.}}. 
The membrane part is a combination of the shell energy and transverse shear energy and the curvature part includes the 2-dimensional Cosserat-curvature energy of the shell.

 	\begin{figure}[h!]
 		\begin{center}
 			\includegraphics[scale=1.1]{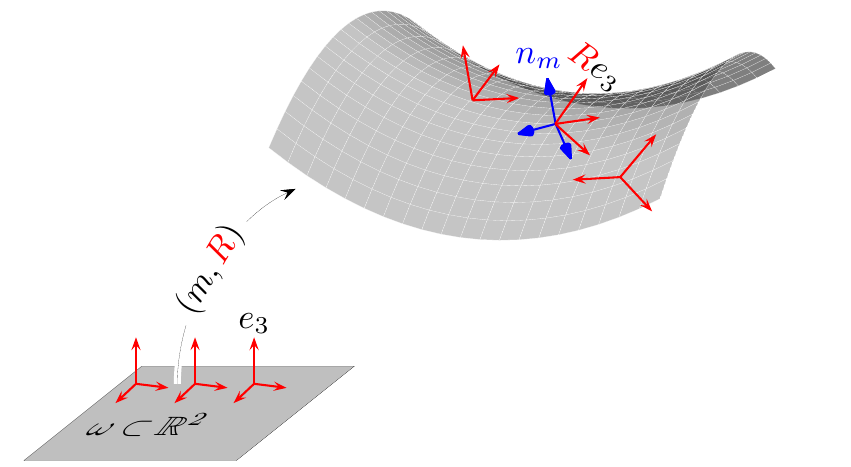}
 			
 			\caption{\footnotesize The mapping $m\col \os\subset \R^2\to\R^3$ describes the deformation of a flat midsurface $\os\subset \R^2$. The Frenet-Darboux frame (in blue, tri\`{e}dre cach\'{e}) is tangent to the midsurface $m$. The independent orthogonal frame mapped by $R\in \SO(3)$ is the tri\`{e}dre mobile (in red, not necessary tangent to the midsurface). Both fields $m$ and $R$ are coupled in the variational problem. This picture describes the situation of a flat Cosserat shell.}
 			\label{Fig1}       % Give a unique label
 		\end{center}
 	\end{figure}

The present paper consists of 6 sections. After some notations in Section \ref{nota}, we start  {by} introducing the three dimensional isotropic nonlinear Cosserat model on the curved reference configuration $\Omega_\xi$  {formulated in terms of the} deformation $\varphi_\xi$ and microrotation $\overline{R}_\xi$. Then we transfer the problem to a variational problem defined on the fictitious flat configuration $\Omega_h$. For this goal, the diffeomorphism $\Theta\col \R^3\to \R^3$ will help us to transfer the deformation from $\Omega_h$ to $\Omega_c$ (the deformed configuration), $\Theta$ encodes the geometry of the curved reference configuration. For applying $\Gamma$-convergence arguments we need to transform our problem from $\Omega_h$ to a domain with fixed thickness $\Omega_1$. This action depends on the type of scaling of the variables, which is introduced in Section \ref{construc}. Next, we propose the admissible sets  {on which the $\Gamma$-convergence will be studied}.  We also obtain the family of functionals which are depending on the thickness $h$. From Section \ref{rescaled} on, we start to discuss the construction of the $\Gamma$-limit for the family of functionals $I_h$. After lengthy calculations, in Subsections \ref{homogen mem}  and \ref{homogen curv} we get the homogenized membrane and curvature energies. The main result of this work is presented in Section \ref{result gamma}, where we prove Theorem (\ref{Theorem homo}) on the $\Gamma$-limit. In Section \ref{sec load} we extend the $\Gamma$-limit theorem to  {the situation when  external loads are present}. Finally, in Section \ref{comparison}, we compare our model with other models{: a Cosserat flat shell model  obtained via $\Gamma$-convergence, a Cosserat shell model obtained via the derivation approach, a 6-parameter shell model, a Cosserat shell model up to $O(h^5)$, the Reissner-Mindlin membrane bending model and Aganovic and Neff's model.}

\section{Notation}\label{nota}
 Let $a,b\in \mathbb{R}^3$. We denote the scalar product on $\mathbb{R}^3$ with $\iprod{a,b}_{\mathbb{R}^3}$ and the associated vector norm with $\norm{a}^2_{\mathbb{R}^3}=\iprod{a,a}_{\mathbb{R}^3}$. The set of real-valued $3\times3$ second order tensors is denoted by $\mathbb{R}^{3\times3}$, where the elements are shown in capital letters. % and by $ {\mathbb{R}^{3\times3}}$ we denote the set of any second order tensor.
  The standard Eucliden scalar product on $\mathbb{R}^{3\times 3}$ is given by $\iprod{X,Y}_{\mathbb{R}^{3\times3}}=\tr (XY^\text{T})$, and the associated norm is $\norm{X}^2=\iprod{X,X}_{\mathbb{R}^{3\times3}}$. If $\id_3$ denotes the identity matrix in $\mathbb{R}^{3\times3}$, then we have $\tr(X)=\iprod{X,\id_3}$. For an arbitrary matrix $X\in \mathbb{R}^{3\times3}$, we define $\text{sym}(X)=\frac{1}{2}(X+X^{\text{T}})$ and $\text{skew}(X)=\frac{1}{2}(X-X^{\text{T}})$ as the symmetric and skew-symmetric parts, respectively and the deviatoric part is defined as $\dev X= X-\frac{1}{n}\tr(X)\id_n$, for all $X\in \R^{n\times n}$. We let ${\rm Sym}(n)$ and ${\rm Sym}\hspace{-0.05cm}^+\hspace{-0.05cm}(n)$ denote the symmetric and positive definite symmetric tensors, respectively. We consider the decomposition $X=\text{sym}(X)+\text{skew}(X)$ and the spaces
 \begin{align*}
 \text{GL}(3)&:=\{X\in \mathbb{R}^{3\times3}\;|\; \det X\neq 0\}\,,\hspace{2.5cm} \;\; \text{GL}^+(3):=\{X\in \mathbb{R}^{3\times3}\;|\; \det X> 0\}\,,\\
  \text{\rm SO}(3)&:=\{X\in \mathbb{R}^{3\times3}\;|\; X^TX=\id_3\,,\;\det X=1\}\,,\,\,\hspace{1cm} \mathfrak{so}(3):=\{A\in \mathbb{R}^{3\times3}\;|\; A^T=-A\}\,,\\
  \mathfrak{sl}(n)&:=\{X\in \R^{n\times n}\,|\, \tr(X)=0\}\,,\,\,\.\hspace{3cm}\text{O}(3):=\{X\in \text{GL}(3)\;|\; X^TX=\id_3\}\,.
  \end{align*}
  The \textit{canonical identification} of $\mathfrak{so}(3)$ and $\R^3$ is denoted by $\axl A\in \R^3$, for $A\in \mathfrak{so}(3)$. We have the following identities
  \begin{align}
  \axl\underbrace{\matr{0&\alpha&\beta\\-\alpha&0&\gamma\\-\beta&-\gamma&0}}_{=A}:=\matr{-\gamma\\ \beta\\-\alpha}\,,\qquad \qquad \qquad \norm{A}^2_{\R^{3\times 3}}=2\norm{\axl A}^2_{\R^3}\,.
  \end{align}
 We use
the \emph{orthogonal Cartan-decomposition  of the Lie-algebra} $\gl(3)$ of all three by three matrices with real components
\begin{align}
\mathfrak{gl}(3)&=\{\mathfrak{sl}(3)\cap {\rm Sym}(3)\}\oplus\mathfrak{so}(3) \oplus\mathbb{R}\!\cdot\! \id,\qquad\qquad
X=\dev \sym X+ \skw X+\frac{1}{3}\tr(X) \id\,\quad \forall \ X\in \mathfrak{gl}(3).
\end{align}
 A matrix having the  three  column vectors $A_1,A_2, A_3\in \R^3$ will be written as 
	$
	(A_1\,|\, A_2\,|\,A_3)$.
%	  We consider the set of third order tensors $\mathcal{I}(3)\cong \mathbb{R}^{3\times3}\times \mathbb{R}^{3\times3}\times \mathbb{R}^{3\times3}$. Also for $\mathfrak{h}\in \mathcal{I}(3)$ we have the following properties: $\norm{\mathfrak{h}}^2:=\sum_{i=1}^{3}\norm{\mathfrak{h}^i}^2$, $\sym(\mathfrak{h}):=(\sym \mathfrak{h}^1,\sym \mathfrak{h}^2,\sym \mathfrak{h}^3)$, $\tr(\mathfrak{h}):= (\tr(\mathfrak{h}^1),\tr(\mathfrak{h}^2),\tr(\mathfrak{h}^3))\in \mathbb{R}^3$ and for any $X\in \mathbb{R}^{3\times 3}$, $X.\mathfrak{h}:=(X\mathfrak{h}^1,X\mathfrak{h}^2,X\mathfrak{h}^3)\in\mathcal{I}(3)$ and $\mathfrak{h}.\,X:=(\mathfrak{h}^1X,\mathfrak{h}^2X,\mathfrak{h}^3X)\in\mathcal{I}(3) $. 

 Let $\Omega\subset \mathbb{R}^3$ be a bounded domain with Lipschitz boundary $\partial\Omega$ and $\Gamma\subset \partial\Omega$ be a smooth subset of the boundary of $\Omega$. In the two dimensional case, we assume that $\omega\subset \mathbb{R}^2$ with Lipschitz boundary $\partial\omega$ and $\gamma$ is also a smooth subset of $\partial\omega$. \\
 Assume that $\varphi\in C^1(\Omega,\mathbb{R}^3)$, then for the vector $x=(x_1,x_2,x_3)\in \mathbb{R}^3$ one can write $\nabla_x \varphi=(\partial_{x_1}\varphi|\partial_{x_2}\varphi|\partial_{x_3}\varphi)$. The standard volume element is $dx\.dy\.dz=dV=d\omega\, dz$. The mapping $m\col \omega\subset \R^2\to \R^3$ is the deformation of the midsurface and $\nabla m:=\nabla_{(x_1,x_2)} m$, is its gradient. We may write $m(x_1,x_2)=(x_1,x_2,0)+v(x_1,x_2)$, where $v\col \R^2\to \R^3$ is the displacement of the midsurface. %For a second order tensor $X$, we define the third order tensor $$\mathfrak{h}=D_xX(x)=\Big(\nabla(X(x).e_1),\nabla(X(x).e_2), \nabla(X(x).e_3)\Big)=(\mathfrak{h}^1,\mathfrak{h}^2,\mathfrak{h}^3)\in\mathcal{I}(3).$$ 
 % So for a second order tensor like $A$ we have $C:A=C^{ijkl}A_{kl}g_i\otimes g_j$. For the second order tensor $X$ we have: $D_x X(x)= (\nabla(X(x).e_1),\nabla(X(x).e_2),\nabla(X(x).e_3))\in  {\mathbb{R}^{3\times3}}\times {\mathbb{R}^{3\times3}}\times {\mathbb{R}^{3\times3}}$.\\
\\For $ {1}\leq p<\infty$, we consider the \textit{Lebesgue spaces}   $
 {\rm L}^p(\Omega)=\{f:\Omega\to \R\;|\;\norm{f}_{{\rm L}^p(\Omega)}<\infty\}$ and their corresponding norms $  \norm{f}_{{\rm L}^p(\Omega)}:=\Big(\int_\Omega|f|^pdx\Big)^{\frac{1}{p}}\,.
$
 For $p\in [1,\infty]$, we define the \textit{Sobolev spaces}
 $
 {\rm W}^{1,p}(\Omega)=\{u\in {\rm L}^p(\Omega)\;|\; Du\in {\rm L}^p(\Omega)\}\,
 $,
  { $
 \norm{u}_{{\rm W}^{1,p}(\Omega)}^p=\norm{u}^p_{{\rm L}^p( \Omega)}+\norm{\D u}^p_{{\rm L}^p( \Omega)}$, where $\D u$ is the weak derivative of $u$. }
 In the case $p=2$, we set ${\rm H}^1(\Omega)={\rm W}^{1,2}(\Omega)$, where
 $
 {\rm H}^1(\Omega)=\{\varphi\in {\rm L}^2(\Omega) \; |\; \nabla \varphi\in {\rm L}^2(\Omega)\}\,$. For the energy function $W$ we define $\D W$  {as the  Fr\'echet derivative  of $W$ and $\D^2 W(F).(H,H)$ denotes the bilinear form of second derivatives.} %, \quad H(\text{curl}, \Omega)=\{v\in {\rm L}^2(\Omega)\;|\; \text{curl}\;v\in {\rm L}^2(\Omega)\}.

 \section{The geometrically nonlinear three dimensional Cosserat model}\label{three model}

 \subsection{The variational problem defined on the thin curved reference configuration}
 Let us consider an elastic material which  in its reference configuration fills the three dimensional \textit{shell-like thin} domain $\Omega_\xi\subset\mathbb{R}^3$, i.e., we  assume that there exists a $C^1$-diffeomorphism $\Theta\col\mathbb{R}^3\rightarrow \mathbb{R}^3$ with $\Theta(x_1,x_2,x_3):=(\xi_1,\xi_2,\xi_3)$ such that $\Theta(\Omega_h)=\Omega_\xi$ and $\omega_\xi=\Theta(\omega\times\{0\})$, where  $\Omega_h\subset \mathbb{R}^3$ with $\Omega_h=\omega\times\big[-\frac{h}{2},\frac{h}{2}\big]$, and $\omega\subset \mathbb{R}^2$  a bounded domain with Lipschitz boundary $\partial\omega$. The scalar $0<h\ll1$ is called \textit{thickness} of the shell, while the domain $\Omega_h$ is called \textit{fictitious flat Cartesian configuration} of the body. We consider the following diffeomorphism $\Theta\col\mathbb{R}^3\rightarrow \mathbb{R}^3$ which is used to describe the curved surface of the shell
	\begin{align}
	\Theta(x_1,x_2,x_3)=y_0(x_1,x_2)+x_3\,n_0(x_1,x_2)\,,
	\end{align}
	where $y_0\col\omega\rightarrow \mathbb{R}^3$ is a $C^2(\omega)$-function  and $n_0=\frac{\partial_{x_1}y_0\times \partial_{x_2}y_0}{\norm{\partial_{x_1}y_0\times \partial_{x_2}y_0}}$ is the unit normal vector on $\omega_\xi$. 
	Remark that
\begin{align}
\nabla_x \Theta(x_3)&\,=\,(\nabla y_0|n_0)+x_3(\nabla n_0|0) \, \  \forall\, x_3\in \left(-\frac{h}{2},\frac{h}{2}\right),
\ \ 
\nabla_x \Theta(0)\,=\,(\nabla y_0|\,n_0),\ \ [\nabla_x \Theta(0)]^{-T}\, e_3\,=n_0,
\end{align}
and  $\det\nabla_x \Theta(0)=\det(\nabla y_0|n_0)=\sqrt{\det[ (\nabla y_0)^T\nabla y_0]}$ represents the surface element.

\begin{figure}[h!]
	\begin{center}
		\includegraphics[scale=1.6]{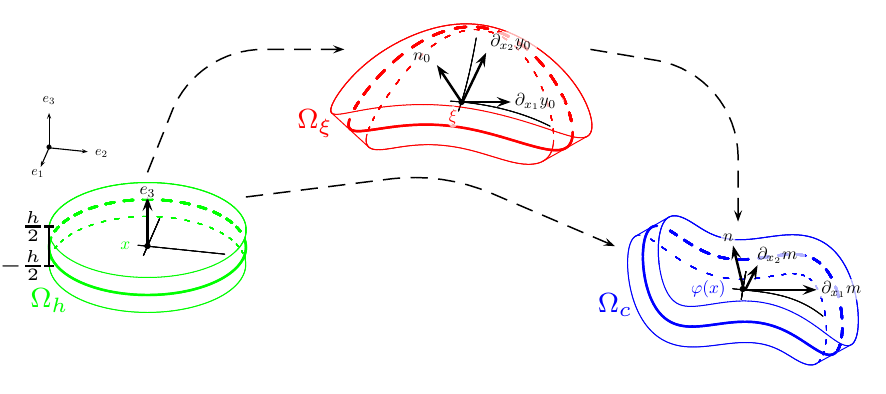}
		  % 	\includegraphics[ scale=0.75]{Fig1.eps}
		  % 		\put(-330,45){\footnotesize{$O$}} \put(-358,40){\footnotesize{$\boldsymbol{e}_1$}}
		  % 		\put(-305,59){\footnotesize{${e}_2$}}
		  % 		\put(-330,85){\footnotesize{${e}_3$}}
		  % 		\put(-290,35){\footnotesize{${x}$}} 
		  % 		%\put(-350,32){\small{$\omega$}} 
		  % 		\put(-310,5){\footnotesize{$\Omega_h$}} 
		  % 		\put(-197,104){\footnotesize{$\Omega_\xi$}} 
		  % 		\put(-185,150){\small{${\xi}$}} 
		  % 		\put(-60,14){\footnotesize{$\Omega_c$}} 
		  % 		\put(-75,48){\footnotesize{$\varphi({x})\!=\!\varphi_\xi({\xi})$}} 
		\put(-320,165){\footnotesize{$\Theta  ,  Q_0={\rm polar}(\nabla \Theta(0))$}} 
		\put(-320,175){\footnotesize{$\nabla\Theta(0)=(\nabla y_0|n_0)$}}
		\put(-275,82){\footnotesize{$\varphi  , \overline{ R}$}}
		%\put(-275,72){\footnotesize{$ \mu_{\rm c}\to \infty:\ \overline{ R}\to R={\rm polar}(\nabla \varphi)$}}
		\put(-65,125){\footnotesize{$\varphi_\xi  ,   \overline{ R}_\xi$}}
		%\put(-110,110){\footnotesize{$ \mu_{\rm c}\to \infty:\ \ \  \overline{ R}_\xi\to { R}_\xi= {\rm polar}(\nabla_\xi \varphi_\xi)$}}
		%\put(-400,50){\footnotesize{$\Omega_h$}}  
		%\put(-5,30){\footnotesize{$\Omega_c$}}  
		%\put(-180,175){\footnotesize{$\Omega_\xi$}}  
		\caption{\footnotesize Kinematics of the 3D-Cosserat model. In each point $\xi\in \Omega_\xi$ of the curvy reference  configuration, there is the deformation $\varphi_\xi\col \Omega_\xi\to \R^3$ and the microrotation $\overline{ R}_\xi\col \Omega_\xi\to \SO(3)$. We introduce a fictitious  flat configuration $\Omega_h$ and refer all fields to that configuration. This introduces a multiplicative split of the total deformation $\varphi\col\Omega_h\to \R^3$ and total rotation $\overline{ R}\col \Omega_h\to \SO(3)$ into ``elastic" parts ($\varphi_\xi\col \Omega_\xi\to \R^3$ and $\overline{ R}_\xi\col \Omega_\xi\to \SO(3)$) and compatible ``plastic" parts (given by $\Theta:\Omega_h\to \Omega_\xi$ and $Q_0:\Omega_h\to {\rm SO}(3)$). The "intermediate" configuration $\Omega_\xi$ is compatible by construction.}
		\label{Fig1}       % Give a unique label
	\end{center}
\end{figure}

In the following we {identify} the  {\it Weingarten map (or shape operator)} {on $y_0(\omega)$} {with its associated matrix}  by 
$
{\rm L}_{y_0}\,=\, {\rm I}_{y_0}^{-1} {\rm II}_{y_0},
$
where ${\rm I}_{y_0}\coloneqq [{\nabla  y_0}]^T\,{\nabla  y_0}\in \mathbb{R}^{2\times 2}$ and  ${\rm II}_{y_0}\coloneqq\,-[{\nabla  y_0}]^T\,{\nabla  n_0}\in \mathbb{R}^{2\times 2}$ are  the matrix representations of the {\it first fundamental form (metric)} and the  {\it  second fundamental form} of the surface  {$y_0(\omega)$}, respectively. 
Then, the {\it Gau{\ss} curvature} ${\rm K}$ of the surface {$y_0(\omega)$} is determined by
$
{\rm K} \,=\,{\rm det}\,{{\rm L}_{y_0}}\, 
$
and the {\it mean curvature} $\,{\rm H}\,$ through
$
2\,{\rm H}\, :={\rm tr}({{\rm L}_{y_0}}) \, .
$ We denote  the principal curvatures of the surface by  ${\kappa_1}$ and ${\kappa_2}$.

	We note that $\det\nabla\Theta(x_3)= 1-2\, {\rm{H}}\,x_3+{\rm{K}}\, x_3^2=(1-\kappa_1\,x_3)(1-\kappa_2\, x_3)>0$. 
	%$\forall \, x_3\in \left[-\nicefrac{h}{2},\nicefrac{h}{2}\right]$ if and only if $1-\kappa_1\,x_3$ and $1-\kappa_2\, x_3$ have the same sign. However,  $1-\kappa_1\,x_3$ cannot be negative, since for $\kappa_1<0$  this will imply that $1<\kappa_1\,x_3$, $\forall \, x_3\in \left[-\nicefrac{h}{2},\nicefrac{h}{2}\right]$, which is not true if $x_3>0$, while for $\kappa_1\geq 0$  this will imply that $1<\kappa_1\,x_3$, for all $x_3\in \left[-\nicefrac{h}{2},\nicefrac{h}{2}\right]$, which is not true if $x_3<0$.
	 Therefore, $1-2\, {\rm{H}}\,x_3+{\rm{K}}\, x_3^2>0$, $\forall \, x_3\in \left[-\nicefrac{h}{2},\nicefrac{h}{2}\right]$ if and only if $1>\kappa_1\,x_3$ and $1>\kappa_2\, x_3$, for all $x_3\in \left[-\nicefrac{h}{2},\nicefrac{h}{2}\right]$. These conditions are equivalent with  $|\kappa_1|\, \frac{h}{2}<1$ and $|\kappa_2|\, \frac{h}{2}<1$, i.e., ~equivalent with  
	\begin{align}\label{ch5in}
	h\,\max \{\sup_{(x_1,x_2)\in {\omega}}|{\kappa_1}|,\sup_{(x_1,x_2)\in {\omega}}|{\kappa_2}|\}<2\,.\end{align}

We assume that after a deformation process given by  the function $\varphi_\xi:\Omega_\xi\to \mathbb{R}^3$, the curvy reference configuration $\Omega_\xi$ is mapped to the deformed configuration $\Omega_c=\varphi_\xi(\Omega_\xi)$. 

 In the Cosserat theory, each point of the reference body is  endowed with three independent orthogonal directors, i.e., with a matrix  $\overline{R}_\xi:\Omega_\xi\rightarrow \text{SO(3)}$ called the \textit{microrotation} tensor. Let us remark that while the tensor $ {\rm polar}(\nabla_\xi\varphi_\xi)\in \text{SO(3)}$ of the polar decomposition of $F_\xi:= \nabla_\xi\varphi_\xi={\rm polar}(\nabla_\xi\varphi_\xi)\sqrt{(\nabla_\xi\varphi_\xi)^T\nabla_\xi\varphi_\xi}$ is not independent of $\varphi_\xi$, the tensor $\overline{R}_\xi$ in the Cosserat theory is independent of $\nabla\varphi_\xi$. In other words, in general, $\overline{R}_\xi\neq {\rm polar}(\nabla_\xi\varphi_\xi)$.

In a geometrical nonlinear and physically linear Cosserat elastic 3D model, the deformation $\varphi_\xi$ and the microrotation $\overline{R}_\xi$ are the solutions of the following \textit{nonlinear minimization problem} on $\Omega_\xi$:
 \begin{align}\label{energy model}
 I(\varphi_\xi,F_\xi, \overline{R}_\xi, \alpha_\xi)=\int_{\Omega_\xi} \Big[W_{\rm mp}(\overline{U}_\xi)+W_{\rm curv}(\alpha_\xi)\Big]\; dV_\xi -\Pi(\varphi_\xi,\overline{R}_\xi) \mapsto \min. \qquad\text{w.r.t}\quad (\varphi_\xi,\overline{R}_\xi)\,,\
 \end{align}
 where
 \begin{align}\label{energy refeconf}
 F_\xi:\,=\,&\nabla_\xi\varphi_\xi\in\mathbb{R}^{3\times 3}\, \qquad \qquad \qquad \qquad \qquad \textrm{(the deformation gradient)},  \notag\\
 \overline U _\xi:\,=\,&\dd\overline{R}^T_\xi F_\xi\in\mathbb{R}^{3\times 3} \ \ \  \ \, \, \  \qquad \qquad \qquad \qquad \textrm{(the non-symmetric Biot-type stretch tensor)},  \notag\\
 \alpha_\xi:\,=\,&\overline{R}_\xi^T\, \Curl_\xi \,\overline{R}_\xi\in\mathbb{R}^{3\times 3}\qquad \qquad\qquad  \quad  \textrm{(the second order  dislocation density tensor \cite{Neff_curl08})}\, ,  \\
  W_{\rm{mp}}(\overline U _\xi):\,=\,&\mu\,\lVert \dev\,\text{sym}(\overline U _\xi-\id_3)\rVert^2+\mu_{\rm c}\,\lVert \text{skew}(\overline U _\xi-\id_3)\rVert^2+
 \frac{\kappa}{2}\,[{\rm tr}(\text{sym}(\overline U _\xi-\id_3))]^2\ \ \, \textrm{(physically linear)}\, ,  \notag\\
  W_{\rm{curv}}( \alpha_\xi):\,=\,&\mu\,{L}_{\rm c}^2\left( a_1\,\lVert \dev\,\text{sym}\, \alpha_\xi\rVert^2+a_2\,\lVert \text{skew}\, \alpha_\xi\rVert^2+  a_3\,
 [{\rm tr}(\alpha_\xi)]^2\right)\,\,\qquad \qquad\quad \textrm{(quadratic curvature energy)}, \notag
 \end{align}
 and $dV(\xi)$ denotes the  volume element in the $\Omega_\xi$-configuration.  The total stored energy can be seen by $W=W_{\rm mp}+W_{\rm curv}$, with $W_{\rm mp}$ as strain energy and $W_{\rm curv}$ as curvature energy. Clearly, $W$ depends on the deformation gradient $F_\xi=\nabla_\xi\varphi_\xi$ and the microrotation $\overline{R}_\xi$. The parameters $\mu\,$ and $\lambda$ are the \textit{Lam\'e constants}
 of classical isotropic elasticity, $\kappa=\displaystyle\frac{2\,\mu\,+3\,\lambda}{3}$ is the \textit{infinitesimal bulk modulus}, $\mu_{\rm c}> 0$ is the \textit{Cosserat couple modulus} and $L_c>0$ is the \textit{internal length} and responsible for \textit{size effects} in the sense that smaller samples are relatively stiffer than larger samples.  If not stated otherwise, we assume that $\mu\,>0$, $\kappa>0$, $\mu_{\rm c}> 0$. We also assume that $a_1>0, a_2>0$ and $a_3>0$, which assures the \textit{coercivity}  and \textit{convexity of the curvature} energy \cite{neff2007geometrically1}.

 The \textit{external loading potential}  $\Pi(\varphi_\xi,\overline{R}_\xi)$ is given by
 \begin{align*}
 \Pi(\varphi_\xi,\overline{R}_\xi)=\Pi_f(\varphi_\xi)+\Pi_c(\overline{R}_\xi)\,,
 \end{align*}
 where
 \begin{align*}
 \Pi_f(\varphi_\xi)&:=\int_{\Omega_\xi}\iprod{f,u_\xi}\, dV_\xi =\text{potential of external applied body forces}\; f\,,\\
 \Pi_c(\overline{R}_\xi)&:=\int_{\Gamma_\xi}\iprod{c,\overline{R}_\xi}\,dS_\xi =\text{potential of external applied boundary couple forces}\; c\,,
 \end{align*}
 with $u_\xi=\varphi_\xi-\xi$  the displacement vector. We will assume that the external loads satisfy in regularity condition:
 \begin{align}\label{regulartiycon}
 f\in {\rm L}^2(\Omega_\xi,\R^3)\,,\qquad\quad c\in {\rm L}^2(\Gamma_{\xi},\R^3)\,,\qquad\quad \overline{ R}_\xi\in {\rm L}^2(\Omega_\xi,\R^3)\,.
 \end{align}
  For simplicity, we consider only Dirichlet-type boundary conditions on $\Gamma_\xi=\gamma_\xi\times \big[-\frac{h}{2},\frac{h}{2}\big]$,  $\gamma_\xi \subset \partial \omega_\xi$,  i.e., we assume that $\varphi_\xi=\varphi^d_\xi$ on $\Gamma_\xi$,
 where $\varphi^d_\xi$ is a given function on $\Gamma_\xi$. 
 
 In \cite{neff2004geometrically} existence of minimizers is shown for positive Cosserat couple modulus $\mu_c>0$. The case $\mu_c=0$ can be handled as well with a slight modification of the curvature energy.
 The form of the curvature energy  $W_{\rm curv}$ is not that originally considered in \cite{Neff_plate04_cmt}.  Indeed, Neff \cite{Neff_plate04_cmt} used a curvature energy expressed in terms of the \textit{third order curvature tensor} $\mathfrak{A}_\xi=(\overline{R}_\xi^T\nabla (\overline{R}_\xi. e_1)\,|\,\overline{R}_\xi^T\nabla (\overline{R}_\xi. e_2)\,|\,\overline{R}_\xi^T\nabla (\overline{R}_\xi. e_3))$. The new form of the energy based on the \textit{second order dislocation density tensor} $\alpha_\xi$ simplifies considerably
 the representation by allowing to use the orthogonal decomposition
 \begin{align}\overline{R}_\xi^T\, \Curl_\xi \,\overline{R}_\xi=\alpha_\xi=\dev\, \sym \,\alpha_\xi+\skw\, \alpha_\xi+ \frac{1}{3}\,\tr(\alpha_\xi)\id_3.\end{align} Moreover, it yields an equivalent control of spatial derivatives of rotations \cite{Neff_curl08} and allows us to write the curvature energy   in a fictitious Cartesian configuration in terms of the so-called \textit{wryness tensor}  \cite{Neff_curl08,Pietraszkiewicz04}
 \begin{align}
 \Gamma_\xi&:= \Big(\mathrm{axl}(\overline{R}_\xi^T\,\partial_{\xi_1} \overline{R}_\xi)\,|\, \mathrm{axl}(\overline{R}_\xi^T\,\partial_{\xi_2} \overline{R}_\xi)\,|\,\mathrm{axl}(\overline{R}_\xi^T\,\partial_{\xi_3} \overline{R}_\xi)\,\Big)\in \mathbb{R}^{3\times 3},
 \end{align}
 since (see \cite{Neff_curl08})  the following close relationship between the \textit{wryness tensor}
 and the \textit{dislocation density tensor} holds
 \begin{align}\label{Nye1}
 \alpha_\xi\,=\,-\Gamma_\xi^T+\tr(\Gamma_\xi)\, \id_3\,, \qquad\textrm{or equivalently},\qquad \Gamma_\xi\,=\,-\alpha_\xi^T+\frac{1}{2}\tr(\alpha_
 \xi)\, \id_3\,.
 \end{align}
 For infinitesimal strains this formula is well-known under
 the name Nye's formula, and $-\Gamma$ is also called Nye's curvature tensor \cite{neff2008curl}. Our choice of the \textit{second order dislocation density tensor} $\alpha_\xi$  has some further implications, e.g., the coupling between the membrane part, the membrane-bending part, the bending-curvature part and the curvature part of the energy of the shell model is transparent and will coincide with shell-bending curvature tensors elsewhere considered \cite{Eremeyev06}.
 
 Within our assumptions on the constitutive coefficients, together with  the orthogonal Cartan-decomposition  of the Lie-algebra
 $
 \mathfrak{gl}(3)$ and with the definition
 \begin{align}\label{e78}
 {W}_{\mathrm{mp}}( X)
 \coloneqq &\, {W}_{\mathrm{mp}}^{\infty}( \sym \,X) +  \mu_{\rm c} \lVert  \mathrm{skew}   \,X\rVert ^2 \quad \ \forall \, X\in\mathbb{R}^{3\times 3},\\
 {W}_{\mathrm{mp}}^{\infty}( S)=&\,\mu\,\norm{S}^2 + \frac{\lambda}{2}[\tr(S)]^2\qquad \qquad \, \forall \, S\,\in{\rm Sym}(3),\notag
 \end{align}
 it follows that there exist positive constants  $c_1^+, c_2^+, C_1^+$ and $C_2^+$  such that for all $X\in \mathbb{R}^{3\times 3}$ the following inequalities hold
 \begin{align}\label{pozitivdef}
 C_1^+ \lVert S\rVert ^2&\geq\, {W}_{\mathrm{mp}}^{\infty}( S) \geq\, c_1^+ \lVert  S\rVert ^2 \qquad \qquad \qquad \qquad \qquad \qquad\ \forall \, S\,\in{\rm Sym}(3),\notag\\
\nonumber C_1^+ \lVert \sym\,X\rVert ^2+\mu_{\rm c}\,\lVert \skew\,X\rVert ^2&\geq\, W_{\mathrm{mp}}(  X) \geq\, c_1^+ \lVert  \sym\,X\rVert ^2+\mu_{\rm c}\,\lVert \skew\,X\rVert ^2 \,\,\,\quad \quad\forall \, X\in\mathbb{R}^{3\times 3},\\
 C_2^+ \lVert X \rVert^2  &\geq\, W_{\mathrm{curv}}(  X )
 \geq\,  c_2^+ \lVert X \rVert^2\,\,\qquad \qquad \qquad \qquad \quad \qquad  \forall \, X\in\mathbb{R}^{3\times 3}.
 \end{align}
 Here,  $c_1^+$ and $C_1^+$ denote respectively the  smallest and the largest eigenvalues of the quadratic form ${W}_{\mathrm{mp}}^{\infty}( X)$. Hence, they are independent of $\mu_{\rm c}$. Both ${W}_{\mathrm{mp}}$ and $W_{\mathrm{curv}}$ are   quadratic, convex and coercive functions of $\overline U _\xi$ and $\alpha_\xi$, respectively.
 
 The regularity condition of the external loads allows us to conclude that
 \begin{align}
 |\Pi_f(\varphi_\xi)|=|\int_{\Omega_\xi}\iprod{f,u_\xi}\,dV_\xi|\leq \norm{f}_{{\rm L}^2(\Omega_\xi)}\norm{u_\xi}_{{\rm L}^2(\Omega_\xi)}\,,
 \end{align}
 which implies that
 \begin{align}
 |\Pi_f(\varphi_\xi)|=|\int_{\Omega_\xi}\iprod{f,u_\xi}dV_\xi|\leq \norm{f}_{{\rm L}^2(\Omega_\xi)}\norm{u_\xi}_{{ {{\rm H}^{1}}}(\Omega_\xi)}\,.
 \end{align}
 Similarly we have
 \begin{align}
 |\Pi_c(\overline{ R}_\xi)|=|\int_{\Gamma_\xi}\iprod{c,\overline{ R}_\xi}dS_\xi|\leq \norm{c}_{{\rm L}^2(\Gamma_\xi)}\norm{\overline{ R}_\xi}_{{\rm L}^2(\Gamma_\xi)}\,.
 \end{align}
 Note that $\norm{\overline{ R}_\xi}^2=3$. By using the fact that $\norm{\overline{ R}_\xi}_{{\rm L}^2(\Gamma_\xi)}^2=(3\,\text{area}\,\Gamma_\xi)$, we get
 \begin{align}
 |\Pi(\varphi_\xi,\overline{ R}_\xi)|\leq \norm{f}_{{\rm L}^2(\Omega_\xi)}\norm{u_\xi}_{{ {{\rm H}^{1}}}(\Omega_\xi)}+\norm{c}_{{\rm L}^2(\Gamma_\xi)}(3\,\text{area}\,\Gamma_\xi)^{\frac{1}{2}}\,.
 \end{align}
 This boundedness  will be later used in the subject of $\Gamma$-convergence.
 \subsection{Transformation of the problem from $\Omega_\xi $ to the fictitious flat configuration $\Omega_h$}
 The first step in our  shell model is to transform the problem to a variational problem defined on the fictitious flat configuration $\Omega_h=\omega\times\big[-\frac{h}{2},\frac{h}{2}\big]$. This process is going to be done with the help of the diffeomorphism $\Theta$. To this aim, we define the mapping 
 \begin{align*}
 \varphi\col \Omega_h\rightarrow \Omega_c\,,\quad\quad \varphi(x_1,x_2,x_3)=\varphi_\xi(\Theta(x_1,x_2,x_3))\,.
 \end{align*}
 The function $\varphi$ maps $\Omega_h$ (fictitious flat Cartesian configuration) into $\Omega_c$ (deformed current configuration).  Moreover, we consider  the \textit{elastic microrotation}  $\overline{Q}_e\col\Omega_h\rightarrow \text{SO(3)}$ similarly defined by
 \begin{align}\label{elastic rot}
 \overline{Q}_e(x_1,x_2,x_3):=\overline{R}_\xi(\Theta(x_1,x_2,x_3))\,,
 \end{align}
 and the \textit{elastic Biot-type stretch tensor}  $\UE \col\Omega_h\rightarrow\mathbb{R}^{3\times 3}$ is then given by
 \begin{align}\label{elastic tens}
 \UE(x_1,x_2,x_3):=\overline{U}_\xi(\Theta(x_1,x_2,x_3))\,.
 \end{align}
 We also have the polar decomposition $\nabla_x\Theta=Q_0\,U_0$, where
 \begin{align}
 Q_0=\text{polar}(\nabla_x\Theta)=\text{polar}([\nabla_x\Theta]^{-T})\in \text{SO(3)}\quad\text{and}\quad U_0\in \text{Sym}^+(3)\,.
 \end{align}
 Now by using (\ref{elastic rot}), we define the {\it total microrotation} tensor
 \begin{align}
 \overline{R}\col\Omega_h\rightarrow \text{SO(3)},\qquad \overline{R}(x_1,x_2,x_3)=\overline{Q}_e(x_1,x_2,x_3)\,Q_0(x_1,x_2,x_3)\,.
 \end{align}
 By applying the chain rule for $\varphi$ one  obtains
 \begin{align}
 \nabla_x\varphi(x_1,x_2,x_3)=\nabla_\xi\varphi_\xi(\Theta(x_1,x_2,x_3))\,\nabla_x\Theta(x_1,x_2,x_3)\,,
 \end{align}
 or equivalently the {\it  multiplicative decomposition}
 \begin{align}
 F_\xi(\Theta(x_1,x_2,x_3))=F(x_1,x_2,x_3)\,[\nabla_x\Theta(x_1,x_2,x_3)]^{-1}\,.
 \end{align}
 Finally the\textit{ elastic non-symmetric stretch tensor} expressed on $\Omega_h$ can now be expressed as
 \begin{align}
 \UE=\overline{Q}_e^TF[\nabla_x\Theta]^{-1}=Q_0\overline{R}^TF[\nabla_x\Theta]^{-1}\,.
 \end{align}
 Note that 
 $
 \partial_{x_k}\overline{Q}_e=\sum_{i=1}^3 \partial_{\xi_i}\overline{R}_\xi\,\partial_{x_k}\xi_i,$  $\partial_{\xi_k}\overline{R}_\xi=\sum_{i=1}^3 \partial_{x_i}\overline{Q}_e\,\partial_{\xi_k}x_i
 $
 and
 \begin{align}
 \overline{R}_\xi^T\,\partial_{\xi_k} \overline{R}_\xi\,=\,&\sum_{i=1}^3(\overline{Q}_e^T \partial_{x_i}\overline{Q}_e)\,\partial_{\xi_k}x_i=\sum_{i=1}^3\
 \big(\overline{Q}_e^T\,\partial_{x_i} \overline{Q}_e\big)([\nabla_x \Theta]^{-1})_{ik}\,,\\
 \mathrm{axl}\big(\overline{R}_\xi^T\,\partial_{\xi_k} \overline{R}_\xi\big)\,=\,&\sum_{i=1}^3\
 \mathrm{axl}\big(\overline{Q}_e^T\,\partial_{x_i} \overline{Q}_e\big)([\nabla_x \Theta]^{-1})_{ik}.\notag
 \end{align}
 Thus, we have from the chain rule
 \begin{align}\label{crg}
 \Gamma_\xi\,=\,&
 \Big( \sum_{i=1}^3{\rm axl}\
 \Big(\overline{Q}_e^T\,\partial_{x_i} \overline{Q}_e\Big)([\nabla_x \Theta]^{-1})_{i1}\,\Big|\,  \sum_{i=1}^3{\rm axl}\
 \big(\overline{Q}_e^T\,\partial_{x_i} \overline{Q}_e\big)([\nabla_x \Theta]^{-1})_{i2}\,\Big|\, \sum_{i=1}^3{\rm axl}\
 \big(\overline{Q}_e^T\,\partial_{x_i} \overline{Q}_e\big)([\nabla_x \Theta]^{-1})_{i3}\Big)\vspace{1.5mm}\notag\\
 \,=\,&\Big(\mathrm{axl}(\overline{Q}_e^T\,\partial_{x_1} \overline{Q}_e)\,|\, \mathrm{axl}(\overline{Q}_e^T\,\partial_{x_2} \overline{Q}_e)\,|\,\mathrm{axl}(\overline{Q}_e^T\,\partial_{x_3} \overline{Q}_e)\,\Big)[\nabla_x \Theta]^{-1}.
 \end{align}
 We recall again  Nye's formula
 \begin{align}
 \alpha_\xi=-\Gamma_{\xi}^T+\tr(\Gamma_{\xi})\.\id_3\,,\qquad\text{or}\qquad \Gamma_{\xi}=-\alpha_\xi^T+\frac{1}{2}\tr(\alpha_\xi)\.\id_3\,.
 \end{align}
 Define 
 \begin{align}
 \Gamma\hspace{-0,05cm}_e:=\Big(\mathrm{axl}(\overline{Q}_e^T\,\partial_{x_1} \overline{Q}_e)\,|\, \mathrm{axl}(\overline{Q}_e^T\,\partial_{x_2} \overline{Q}_e)\,|\,\mathrm{axl}(\overline{Q}_e^T\,\partial_{x_3} \overline{Q}_e)\,\Big), \qquad\alpha_e:=\overline{Q}_e^T\Curl_x\,\overline{Q}_e.
 \end{align}
 Using   Nye's formula for $\alpha_e$ and $\Gamma\hspace{-0,05cm}_e$, 
 we deduce (see \cite{GhibaNeffPartI})
 \begin{align}
 \alpha_\xi\,=\,&\,[\nabla_x \Theta]^{-T}
 \alpha_e-\frac{1}{2}\tr(\alpha_
 e)\, [\nabla_x \Theta]^{-T}-\tr(\, [\nabla_x \Theta]^{-T}\alpha_e)\, \id_3+\frac{1}{2}\tr(\alpha_
 e)\, \tr([\nabla_x \Theta]^{-1})\, \id_3\vspace{1.5mm}\notag\\
 \,=\,&\,[\nabla_x \Theta]^{-T}
 \alpha_e-\tr(\alpha_e^T\,[\nabla_x \Theta]^{-1})\, \id_3-\frac{1}{2}\tr(\alpha_
 e)\, \Big([\nabla_x \Theta]^{-T}-\, \tr([\nabla_x \Theta]^{-1})\, \id_3\Big).
 \end{align}
 However, we will not use this formula to rewrite the curvature energy in the fictitious Cartesian configuration $\Omega_h$, since it is easier to use (from \eqref{Nye1})
 \begin{align}
 \sym \,\alpha_\xi\,=\,&-\sym\,\Gamma_\xi+ \tr(\Gamma_\xi)\, \id_3\,=\,-\sym(\Gamma\hspace{-0,05cm}_e\,[\nabla_x \Theta]^{-1})+ \tr(\Gamma\hspace{-0,05cm}_e\,[\nabla_x \Theta]^{-1})\, \id_3, \vspace{1.5mm}\notag\\
 {\rm dev}\,\sym \,\alpha_\xi\,=\,&-{\rm dev}\,\sym\, \Gamma_\xi\,=\,-{\rm dev}\,\sym (\Gamma\hspace{-0,05cm}_e\,[\nabla_x \Theta]^{-1}),\vspace{1.5mm}\\
 {\skw}
 \,\alpha_\xi\,=\,&-\skw\,\Gamma_\xi\,=\,-\skw(\Gamma\hspace{-0,05cm}_e\,[\nabla_x \Theta]^{-1}),\vspace{1.5mm}\notag\\
 \tr(\alpha_\xi)\,=\,&-\tr(\Gamma_\xi)+ 3\,\tr(\Gamma_\xi)\,=\,2\,\tr(\Gamma_\xi)\,=\,2\,\tr(\Gamma\hspace{-0,05cm}_e\,[\nabla_x \Theta]^{-1}),\notag
 \end{align}
 for expressing the curvature energy in terms of $\Gamma\hspace{-0,05cm}_e\,[\nabla_x \Theta]^{-1}$ as
 \begin{align}\label{curalpha}
 W_{\rm{curv}}( \alpha_\xi)\,=\,&\mu\,{L}_{\rm c}^2\left( a_1\,\lVert \dev\,\text{sym} (\Gamma\hspace{-0,05cm}_e\,[\nabla_x \Theta]^{-1})\rVert^2+a_2\,\lVert \text{skew}(\Gamma\hspace{-0,05cm}_e\,[\nabla_x \Theta]^{-1})\rVert^2+4\,a_3\,
 [{\rm tr}(\Gamma\hspace{-0,05cm}_e\,[\nabla_x \Theta]^{-1})]^2\right).
 \end{align}
 Note that using \
 \begin{align}\label{curvfuraxl}
 \overline{Q}_e^T\,\partial_{x_i} \overline{Q}_e\,=\,Q_0\,\overline{R}^T\,\partial_{x_i} (\overline{R}\,Q_0^T)\,=\,Q_0 (\overline{R}^T\,\partial_{x_i} \overline{R})\,Q_0^T-Q_0(Q_0^T\partial_{x_i} Q_0)\,Q_0^T,\quad i\,=\,1,2,3\,,
 \end{align}
 and the invariance (\cite{GhibaNeffPartI}, relation (3.12))
 \begin{align}
 \mathrm{axl}(Q\, A\, Q^T)\,=\,Q\,\mathrm{axl}( A)\qquad  \forall\, Q\in {\rm SO}(3) \quad \text{and}\quad \forall\, A\in \mathfrak{so}(3),
 \end{align}
 we obtain  the following form of the {wryness tensor}  defined on $\Omega_h$
 \begin{align}\label{qiese}
 \Gamma(x_1,x_2,x_3) :\,=\,&\,\Gamma_\xi(\Theta(x_1,x_2,x_3))\,=\,\Gamma\hspace{-0,05cm}_e\,[\nabla_x \Theta]^{-1}\notag\\\,=\,&\,Q_0\Big[ \Big(\mathrm{axl}(\overline{R}^T\,\partial_{x_1} \overline{R})\,|\, \mathrm{axl}(\overline{R}^T\,\partial_{x_2} \overline{R})\,|\,\mathrm{axl}(\overline{R}^T\,\partial_{x_3} \overline{R})\,\Big)\\&\ \ \quad - \Big(\mathrm{axl}(Q_0^T\,\partial_{x_1} Q_0)\,|\, \mathrm{axl}(Q_0^T\,\partial_{x_2} Q_0)\,|\,\mathrm{axl}(Q_0^T\,\partial_{x_3} Q_0)\,\Big)\Big] \,[\nabla_x \Theta]^{-1}.\notag
 \end{align}
 Now the minimization problem on the curved reference configuration $\Omega_\xi$ is transformed to the fictitious flat Cartesian configuration $\Omega_h$ as follows
\begin{align}\label{energy model.fictituos}
  I=\int_{\Omega_h} \Big[W_{\rm mp}(\UE)+\widetilde{W}_{\rm curv}(\Gamma)\Big]\det(\nabla_x\Theta)\;dV-\widetilde{\Pi}(\varphi,\overline{Q}_e)\ \mapsto \min. \qquad\text{w.r.t}\quad (\varphi,\overline{Q}_e)\,,
\end{align}
where
\begin{align}
\nonumber W_{\rm mp}(\UE)&=\;\mu\,\norm{\sym (\UE-\id_3)}^2+\mu_c\,\norm{\skew (\UE-\id_3)}^2 + \frac{\lambda}{2}[\tr(\sym(\UE-\id_3))]^2\\
\nonumber  &= \mu\,\norm{\dev\sym(\UE - \id_3)}^2+\mu_c\,\norm{\skew(\UE - \id_3)}^2+\frac{\kappa}{2}[\tr(\sym(\UE-\id_3))]^2\,,\\
 \widetilde{W}_{\rm curv}(\Gamma)&\,=\,\mu\,{L}_{\rm c}^2 \left( a_1\,\lVert \dev \,\text{sym} \,\Gamma\rVert^2+a_2\,\lVert \text{skew} \,\Gamma\rVert^2+4\,a_3\,
 [{\rm tr}(\Gamma)]^2\right)\\
 \nonumber&=\,\mu\,{L}_{\rm c}^2 \left( b_1\,\lVert\sym \,\Gamma\rVert^2+b_2\,\lVert \skew \,\Gamma\rVert^2+\,b_3\,
 [{\rm tr}(\Gamma)]^2\right)\,,
 \end{align}
where $b_1=a_1, b_2=a_2$, $b_3=\frac{12a_3-a_1}{3}$ and $\widetilde{\Pi}(\varphi,\overline{Q}_e)=\widetilde{\Pi}_f(\varphi)+\widetilde{\Pi}_c(\overline{Q}_e)$, with the following forms
 \begin{align}
\widetilde {\Pi}_f(\varphi)&:=\Pi_f(\varphi_\xi)=\int_{\Omega_\xi}\iprod{f,u_\xi}\,dV_\xi =\int_{\Omega_h}\iprod{\widetilde{f},\widetilde{u}}\,dV\,,\notag\\
 \widetilde{\Pi}_c(\overline{Q}_e)&:=\Pi_c(\overline{R}_\xi)=\int_{\Gamma_\xi}\iprod{c,\overline{R}_\xi} \,dS_\xi=\int_{\Gamma_h}\iprod{\widetilde{c},\overline{Q}_e}\, dS\,,
 \end{align}
with $\widetilde{u}(x_i)=\varphi(x_i)-\Theta(x_i)$  the displacement vector, $\overline{R}=\overline{Q}_e\,Q_0$ the total microrotation,  the vector fields $ \widetilde f $ and  $ \widetilde c $  can be determined in terms of $   f $ and $   c $, respectively, for instance (see \cite[Theorem 1.3.-1 ]{Ciarlet98b}) 
\begin{align}
\widetilde{f}(x)&=f(\Theta(x))\det(\nabla_x\Theta),\qquad\qquad \  \widetilde{c}(x)=c\.(\Theta(x))\det(\nabla_x\Theta).
\end{align} 
 Note that regarding to the regularity condition (\ref{regulartiycon}), the following regularity conditions will hold as well
 \begin{align}\label{regu2}
 \widetilde{f}\in {\rm L}^2(\Omega_h,\R^3)\,,\qquad \qquad\widetilde{c}\in {\rm L}^2(\Gamma_h,\R^3)\,,\qquad\qquad  \overline{Q}_e\in {\rm L}^2(\Gamma_h,\R^3)\,.
 \end{align}
The Dirichlet-type boundary conditions (in the sense of the traces) on $\Gamma_\xi=\gamma_\xi\times \big[-\frac{h}{2},\frac{h}{2}\big]$, $\gamma_\xi\subset \partial \omega_\xi$, read on the boundary $\Gamma_h=\gamma\times   \big[-\frac{h}{2},\frac{h}{2}\big]$, $\gamma=\Theta^{-1}(\gamma_\xi)\subset \partial \omega$, as $\varphi=\varphi^h_d$ on $\Gamma_h$, where $\varphi^h_d=\Theta^{-1}(\varphi^h_\xi)$.

\section{Construction of the family of functionals $I_{h_j}$}\label{construc}
 
 \subsection{Nonlinear scaling for the deformation gradient and the microrotation}
 In order to apply  the methods of $\Gamma$-convergence,  the first step is to transform our problem further from $\Omega_h$ to a \textit{domain} with fixed thickness  $\Omega_1=\omega\times[-\frac{1}{2},\frac{1}{2}]\subset\mathbb{R}^3,\;\omega\subset \mathbb{R}^2$. For this goal, scaling of the variables (dependent/independent) would be the first step. However, it is important to know which kind of scaling is suitable for our variables. In this paper we  introduce only the \textit{nonlinear scaling}, although in linear models, a concept of \textit{linear} scaling is used as well (\cite{neff2007geometrically1, neff2010reissner}). For a vector field $z\col\Omega_h\rightarrow \mathbb{R}^3$ we consider the  \textit{nonlinear} scaling   $z^\natural\col\Omega_1\rightarrow \mathbb{R}^3$, where only the independent variables will be scaled
 \begin{align}\label{nonlinear sca}
 \nonumber x_1=\eta_1\.,\quad\qquad x_2=&\;\eta_2\., \qquad \quad x_3 = h\.\eta_3\.,\\
 z^\natural\Big(x_1,x_2,\frac{1}{h}x_3\Big):=& \;z(x_1,x_2,x_3)\,, \qquad \text{nonlinear scaling}\,.
 \end{align}
 Consequently, the gradient of $z(x)=(z_1(x),z_2(x),z_3(x))$ with respect to $x=(x_1,x_2,x_3)$ can be expressed in terms of the derivative of $z^\natural$ with respect to $\eta=(\eta_1,\eta_2,\eta_3)$
 \begin{align}
 \nabla_x z(x_1,x_2,x_3) =&\; \Big(\partial_{\eta_1}z^\natural(\eta_1,\eta_2,\eta_3)\;|\;\partial_{\eta_2}z^\natural(\eta_1,\eta_2,\eta_3)\;|\;\frac{1}{h}\partial_{\eta_3}z^\natural(\eta_1,\eta_2,\eta_3) \Big):= \nabla_\eta^h z^\natural(\eta)\,.
 \end{align}
 For more details about scaling of the variable we refer to \cite{neff2010reissner}. In all our computations the mark $\cdot^\natural$ indicates the nonlinear scaling.

 In a first step we will apply the nonlinear scaling to the deformation. For $\Omega_1=\omega\times\Big[-\displaystyle\frac{1}{2},\frac{1}{2}\Big]\subset\mathbb{R}^3$, $\omega\subset \mathbb{R}^2$, we define the scaling transformations
 \begin{align}
 \nonumber\zeta\col&\; \eta\in\Omega_1\mapsto \mathbb{R}^3\,, \qquad \zeta(\eta_1,\eta_2,\eta_3):=(\eta_1,\eta_2,h\,\eta_3)\,,\\
 \zeta^{-1}\col&\;x\in\Omega_h\mapsto\mathbb{R}^3\,, \qquad\zeta^{-1}(x_1,x_2,x_3):=(x_1,x_2,\frac{x_3}{h})\,,
 \end{align}
 with $\zeta(\Omega_1)=\Omega_h$. By using the relation (\ref{nonlinear sca}) and above transformations we obtain the formula for the transformed deformation $\varphi$ as 
 \begin{align}
 \nonumber\varphi(x_1,x_2,x_3)&=\varphi^\natural(\zeta^{-1}(x_1,x_2,x_3))\, \quad \forall x\in \Omega_h\.; \quad \varphi^\natural(\eta)=\varphi(\zeta(\eta))\quad \forall \eta\in \Omega_1\,,\\
 \nabla_x\varphi(x_1,x_2,x_3)&=\begin{pmatrix}\vspace{0.2cm}
 \partial_{\eta_1}\varphi_{1}^\natural(\eta)& \partial_{\eta_2}\varphi_{1}^\natural(\eta)&\displaystyle\frac{1}{h}\partial_{\eta_3}\varphi_{1}^\natural(\eta)\\\vspace{0.2cm}
 \partial_{\eta_1}\varphi_{2}^\natural(\eta)& \partial_{\eta_2}\varphi_{2}^\natural(\eta)&\displaystyle\frac{1}{h}\partial_{\eta_3}\varphi_{2}^\natural(\eta)\\\vspace{0.2cm}
 \partial_{\eta_1}\varphi_{3}^\natural(\eta)& \partial_{\eta_2}\varphi_{3}^\natural(\eta)&\displaystyle\frac{1}{h}\partial_{\eta_3}\varphi_{3}^\natural(\eta)\\
 \end{pmatrix}=\nabla_\eta^h\varphi^\natural(\eta)=F^\natural_h\..
 \end{align}
 Now we will do the same process for the microrotation tensor $\overline{Q}_e^\natural\col\Omega_1\rightarrow \text{SO(3)}$ 
 \begin{align}
 \overline{Q}_e(x_1,x_2,x_3)&=\overline{Q}_e^\natural(\zeta^{-1}(x_1,x_2,x_3))\, \quad \forall x\in \Omega_h\,;\; \;\overline{Q}_e^\natural(\eta)=\overline{Q}_e(\zeta(\eta))\,,\; \quad\forall \eta\in \Omega_1\,,\notag
% \nabla_x(\overline{Q}_e(x_1,x_2,x_3).e_i)&=\Big(\partial_{\eta_1}(\overline{Q}_e^\natural(\eta).e_i)\big|\partial_{\eta_2}(\overline{Q}_e^\natural(\eta).e_i)\big|\frac{1}{h}\partial_{\eta_3}(\overline{Q}_e^\natural(\eta).e_i)\Big)=\nabla_\eta^h(\overline{Q}_e^\natural(\eta).e_i)\,,\\
% D_\eta^h\overline{Q}_{e,\eta}^\natural(\eta)&:=\Big(\nabla_\eta^h(\overline{Q}_e^\natural(\eta).e_1),\nabla_\eta^h(\overline{Q}_e^\natural(\eta).e_2),\nabla_\eta^h(\overline{Q}_e^\natural(\eta).e_3)\Big)\in\mathcal{I}(3)\,,\notag
 \end{align}
 as well as for $\nabla_x \Theta(x)$, the  matrices of its  polar decomposition $\nabla_x\Theta(x)=Q_0(x)U_0(x)$, in the sense that
 \begin{align}
 (\nabla_x\Theta)^\natural(\eta) =(\nabla_x \Theta)(\zeta(\eta)), \qquad Q_0^\natural(\eta)=Q_0(\zeta(\eta)), \qquad U_0^\natural(\eta)= U_0(\zeta(\eta))\,.
 \end{align} 
   We also define  $\overline{R}^\natural\col\Omega_1\rightarrow \text{SO(3)}$ 
 \begin{align}
 \overline{R}(x_1,x_2,x_3)&=\overline{R}^\natural(\zeta^{-1}(x_1,x_2,x_3))\, \quad \forall x\in \Omega_h\,;\; \qquad\overline{R}^\natural(\eta)=\overline{R}(\zeta(\eta))\,,\; \quad\forall \eta\in \Omega_1\,.\notag
 \end{align}
 With this, the non-symmetric stretch tensor expressed in a point of $\Omega_1$ is given by
  \begin{align}
 \UEN =\overline{Q}_e^{\natural,T}F^\natural_h [(\nabla_x\Theta)^\natural ]^{-1}=\overline{Q}_e^{\natural,T}\nabla_\eta^h\varphi^\natural(\eta)[(\nabla_x\Theta)^\natural ]^{-1}\,.
 \end{align}
 Since for $\eta_3=0$ their values expressed in terms of $(\eta_1, \eta_2,0)$ and $(x_1,x_2,0)$ coincide, we will omit the sign $\cdot ^\natural$ and we will understand from the context the variables into discussion, i.e.,  
 \begin{align}
 (\nabla_x \Theta)(0)&:=(\nabla y_0\,|n_0)=(\nabla_x \Theta)^\natural(\eta_1,\eta_2,0)\equiv (\nabla_x \Theta)(x_1,x_2,0), \notag\\ Q_0(0)&:=Q_0^\natural(\eta_1,\eta_2,0)\equiv Q_0(x_1,x_2,0), \qquad\qquad\qquad U_0(0):=U_0^\natural(\eta_1,\eta_2,0)\equiv U_0(x_1,x_2,0).\notag
 \end{align}
 Therefore, we have \begin{align}
 \overline{Q}_{e}^\natural (\eta)=\overline{R}^{\natural}(\eta)(Q_0^\natural(\eta))^T,\qquad  \UEN(\eta)=\overline{Q}_{e}^{\natural,T}(\eta)F_h^\natural(\eta)[(\nabla_x\Theta)^\natural ]^{-1}=Q_0^\natural(\eta)\overline{R}^{\natural,T}(\eta)F^\natural_h(\eta)[(\nabla_x\Theta)^\natural ]^{-1}\,,
\end{align}and
\begin{align}
\Gamma^\natural_{h}& =\Big(\mathrm{axl}(\overline{Q}_{e,h}^{\natural,T}\,\partial_{\eta_1} \overline{Q}_{e,h}^\natural)\,|\, \mathrm{axl}(\overline{Q}_{e,h}^{\natural,T}\,\partial_{\eta_2} \overline{Q}_{e,h}^\natural)\,|\,\frac{1}{h}\mathrm{axl}(\overline{Q}_{e,h}^{\natural,T}\,\partial_{\eta_3} \overline{Q}_{e,h}^\natural)\,\Big)[(\nabla_x\Theta)^\natural ]^{-1}
.
\end{align}

 %Hence, we can define
 %\begin{align*}
 %\overline{U}_{e,h}^\natural-\id_3&:= \overline{Q}_{e}^{\natural,T}F_s^\natural[\nabla_x\Theta]^{-1}-\id_3=\overline{Q}_{e}^{\natural,T}\nabla_\eta^h\varphi^\natural[\nabla_x\Theta]^{-1}-\id_3\\
 %%&=\overline{Q}_{e}^{\natural,T}(\nabla m|0)[\nabla_x\Theta]^{-1}+\overline{Q}_{e}^{\natural,T}c^*[\nabla_x\Theta]^{-1}-(\nabla\Theta|0)[\nabla_x\Theta]^{-1}-n_0[\nabla_x\Theta]^{-1}\\
 %%&=(\overline{Q}_{e}^{\natural,T}\nabla m-\nabla\Theta|0)[\nabla_x\Theta]^{-1}+(\overline{Q}_{e}^{\natural,T}c^*-n_0)[\nabla_x\Theta]^{-1}.
 %\end{align*}
 \subsection{Transformation of the problem from $\Omega_h$ to a fixed domain $\Omega_1$}
 The next step, in order to apply the $\Gamma$-convergence technique, is to transform the minimization problem onto the \textit{fixed domain} $\Omega_1$, which is independent from the thickness $h$.
According to the results from the previous subsection, we have the following minimization problem on $\Omega_1$
 \begin{align}\label{energy model.fictituos.ond omega1}
 \nonumber &I^\natural_h(\varphi^\natural,\nabla_\eta^h\varphi^\natural,\overline{Q}_e^\natural,\Gamma^\natural_{h})
  =\int_{\Omega_1} \Big({W}_{\rm mp}(\UHN)+{\widetilde{W}}_{\rm curv}(\Gamma^\natural_{h})\Big)\det(\nabla_\eta\zeta(\eta))\det((\nabla_x\Theta)^\natural )\;dV_\eta-{\Pi}^\natural_h(\varphi^\natural,\overline{Q}_e^\natural)\\
 & =\underbrace{\int_{\Omega_1}\; h \;\Big[\Big({{W}}_{\rm mp}(\UHN)+{\widetilde{W}}_{\rm curv}(\Gamma^\natural_{h})\Big)\det((\nabla_x\Theta)^\natural )\Big]\;dV_\eta}_{:=J^\natural_h(\varphi^\natural,\nabla_\eta^h\varphi^\natural,\overline{Q}_e^\natural,\Gamma^\natural_{h})}-{\Pi}^\natural_h(\varphi^\natural,\overline{Q}_e^\natural)
 \mapsto \min\;\text{w.r.t}\; (\varphi^\natural,\overline{Q}_e^\natural)\,,
 %\nonumber&=\int_{\Omega_1}\; \,\textcolor{orange}{\det(\nabla y_0|n_0)} \;\Big[\Big(\overline{{W}}_{\rm mp}(\UHN)+\overline{W}_{\rm curv}(\mathcal{A}^\natural)\Big)\textcolor{orange}{(h-2h^2\eta_3\.H+h^3\eta_3^2\.K)}\Big]\;dV_\eta-{\Pi}^\natural(\varphi^\natural,\overline{Q}_e^\natural)
 %\mapsto \min\;\text{w.r.t}\; (\varphi^\natural,\overline{Q}_e^\natural)
 \end{align}
 where 
 \begin{align}\label{boun co}
 \nonumber {{W}}_{\rm mp}(\UHN)&=\;\mu\,\norm{\sym (\UHN-\id_3)}^2+\mu_c\,\norm{\skew (\UHN-\id_3)}^2 + \frac{\lambda}{2}[\tr(\sym(\UHN-\id_3))]^2\,,\\
 \widetilde{W}_{\rm curv}(\Gamma^\natural_{h})&\,=\,\mu\, {L}_{\rm c}^2 \left( a_1\,\lVert \dev \,\text{sym} \,\Gamma^\natural_{h}\rVert^2+a_2\,\lVert \text{skew} \,\Gamma^\natural_{h}\rVert^2+\,a_3\,
 [{\rm tr}(\Gamma^\natural_{h})]^2\right)\,,
 \end{align}
 with  ${\Pi}^\natural_h(\varphi^\natural,\overline{Q}_e^\natural)={\Pi}_f^\natural(\varphi^\natural)+{\Pi}_{c}^\natural(\overline{Q}_e^\natural)$, 
  \begin{align}\label{load}
 {\Pi}_f^\natural(\varphi^\natural)&:=\widetilde{\Pi}_f(\varphi)=\int_{\Omega_h}\iprod{\widetilde{f},\widetilde{u}}\,dV=\int_{\Omega_1} \iprod{\widetilde{f}^\natural,\widetilde{u}^\natural}\,\det(\nabla_\eta\zeta(\eta))\,dV_\eta=h\,\int_{\Omega_1} \iprod{\widetilde{f}^\natural,\widetilde{u}^\natural}\,dV_\eta\,,\notag\\
{\Pi}_c^\natural(\overline{Q}_e^\natural)&:=\widetilde{\Pi}_c(\overline{Q}_e)=\int_{\Gamma_h}\iprod{\widetilde{c},\overline{Q}_e} dS=\int_{\Gamma_1} \iprod{\widetilde{c}^\natural,\overline{Q}_e^\natural}\,\det(\nabla_\eta\zeta(\eta))\,dS_\eta=h\,\int_{\Gamma_1} \iprod{\widetilde{c}^\natural,\overline{Q}_e^\natural}\,dS_\eta\,\,,
 \end{align}
 with $\widetilde{f}^\natural(\eta)=\widetilde{f}(\zeta(\eta))$, $\widetilde{u}^\natural(\eta)=\widetilde{u}(\zeta(\eta))$,
 $\widetilde{c}^\natural(\eta)=\widetilde{c}(\zeta(\eta))$ and $\overline{Q}_e^\natural(\eta)=\overline{Q}_e(\zeta(\eta))$. Here we recall that regarding to the regularity condition (\ref{regu2}), it holds
\begin{align}
\widetilde{f}^\natural\in {\rm L}^2(\Omega_1,\R^3)\,,\qquad\quad\widetilde{c}^\natural\in {\rm L}^2(\Gamma_1,\R^3)\,,\qquad\quad \overline{ Q}^\natural\in {\rm L}^2(\Gamma_1,\R^3)\,.
\end{align}
Therefore, we may write
\begin{align}
\nonumber|\Pi_f^\natural(\varphi^\natural)|&=|h\int_{\Omega_1}\iprod{\widetilde{f}^\natural,\widetilde{u}^\natural}dV_\eta|\leq h\norm{\widetilde{f}^\natural}_{{\rm L}^2(\Omega_1)}\norm{\widetilde{u}^\natural}_{{\rm L}^2(\Omega_1)}\,,\\
|\Pi_c^\natural(\overline{ Q}_e^\natural)|&=|h\int_{\Gamma_1}\iprod{\widetilde{c}^\natural,\overline{ Q}_e^\natural}dS_\eta|\leq h\norm{\widetilde{c}^\natural}_{{\rm L}^2(\Gamma_1)}\norm{\overline{ Q}_e^\natural}_{{\rm L}^2(\Gamma_1)}\,,
\end{align}
and consequently
\begin{align}
|\Pi_h^\natural(\varphi^\natural,\overline{ Q}_e^\natural)|\leq h\Big[\norm{\widetilde{f}^\natural}_{{\rm L}^2(\Omega_1)}\norm{\widetilde{u}^\natural}_{{\rm L}^2(\Omega_1)}+\norm{\widetilde{c}^\natural}_{{\rm L}^2(\Gamma_1)}\norm{\overline{ Q}_e^\natural}_{{\rm L}^2(\Gamma_1)}\Big]\,.
\end{align}

\begin{figure}[h!]
	\begin{center}
		\includegraphics[scale=1.6]{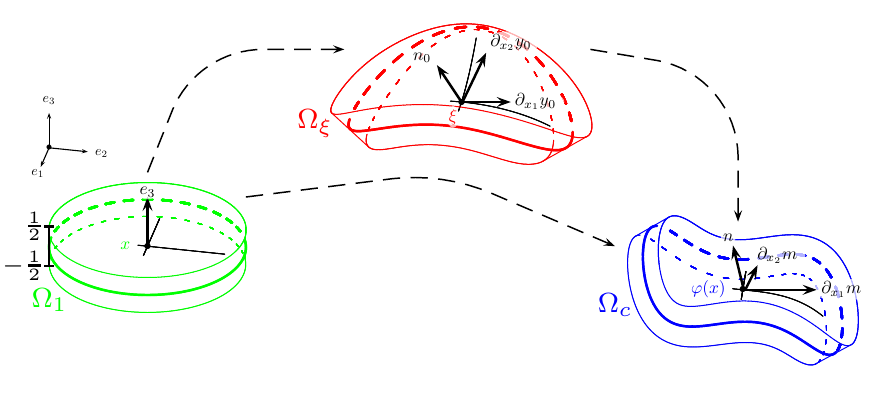}
		% % % 	\includegraphics[ scale=0.75]{Fig1.eps}
		% % % 		\put(-330,45){\footnotesize{$O$}} \put(-358,40){\footnotesize{$\boldsymbol{e}_1$}}
		% % % 		\put(-305,59){\footnotesize{${e}_2$}}
		% % % 		\put(-330,85){\footnotesize{${e}_3$}}
		% % % 		\put(-290,35){\footnotesize{${x}$}} 
		% % % 		%\put(-350,32){\small{$\omega$}} 
		% % % 		\put(-310,5){\footnotesize{$\Omega_h$}} 
		% % % 		\put(-197,104){\footnotesize{$\Omega_\xi$}} 
		% % % 		\put(-185,150){\small{${\xi}$}} 
		% % % 		\put(-60,14){\footnotesize{$\Omega_c$}} 
		% % % 		\put(-75,48){\footnotesize{$\varphi({x})\!=\!\varphi_\xi({\xi})$}} 
		\put(-320,165){\footnotesize{$\Theta  ,  Q_0={\rm polar}(\nabla \Theta(0))$}}
		\put(-320,175){\footnotesize{$\nabla\Theta(0)=(\nabla y_0|n_0)$}} 
		\put(-275,82){\footnotesize{$\varphi^\natural  , \overline{ R}^\natural$}}
		%\put(-275,72){\footnotesize{$ \mu_{\rm c}\to \infty:\ \overline{ R}\to R={\rm polar}(\nabla \varphi)$}}
		\put(-65,125){\footnotesize{$\varphi_\xi  ,   \overline{ R}_\xi$}}
		%\put(-110,110){\footnotesize{$ \mu_{\rm c}\to \infty:\ \ \  \overline{ R}_\xi\to { R}_\xi= {\rm polar}(\nabla_\xi \varphi_\xi)$}}
		%\put(-400,50){\footnotesize{$\Omega_1$}} 
%		\put(-5,30){\footnotesize{$\Omega_c$}}  
%		\put(-180,175){\footnotesize{$\Omega_\xi$}}  
		\caption{\footnotesize The complete picture of the involved domains. $\Omega_1$ is the fictitious flat domain with unit thickness, $\Omega_\xi$ denotes the curved reference configuration, $\Omega_c$ is the current deformed configuration. Again, the reference configuration $\Omega_\xi$ takes on the role of a compatible intermediate configuration in the multiplicative decomposition. }
		\label{Fig2}       % Give a unique label
	\end{center}
\end{figure}
The Dirichlet-type boundary conditions (in the sense of the trace) on $\Gamma_h=\gamma\times   \big[-\frac{h}{2},\frac{h}{2}\big]$, $\gamma=\Theta^{-1}(\gamma_\xi)\subset \partial \omega$, read on the boundary $\Gamma_1=\gamma\times   \big[-\frac{1}{2},\frac{1}{2}\big]$ as $\varphi^\natural=\varphi^\natural_d$ on $\Gamma_1$, where $\varphi^\natural_d=\Theta^{-1}(\varphi^h_d)$.  

\section{Equi-coercivity and compactness of the family of  energy functionals}\label{equi-comp1}

\subsection{The set of admissible solutions}
Due to the scaling, we have obtained a family of functionals 
\begin{align}
J^\natural_h(\varphi^\natural,\nabla_\eta^h\varphi^\natural,\overline{Q}_e^\natural,\Gamma_h^\natural)=\int_{\Omega_1}\; h \;\Big[\Big({{W}}_{\rm mp}(\UHN)+\widetilde{W}_{\rm curv}(\Gamma^\natural_{h})\Big)\det((\nabla_x\Theta)^\natural )\Big]\;dV_\eta\,,
\end{align} depending on the thickness $h$. The next step is to  prepare a suitable space $X$ on  which the existence of $\Gamma$-convergence will be studied. As already mentioned, for applying the $\Gamma$-limit  techniques   we need  {to work with a separable and metrizable space  $X$}. Since working in ${\rm H}^{1}(\Omega_1,\mathbb{R}^3)\times { {{\rm H}^{1}}}(\Omega_1,\text{SO(3)})$ means to consider the weak  {topology}, which does not give rise to a metric space, we introduce the following spaces:
\begin{align}\label{sets}
\nonumber X&:=\{(\varphi^\natural,\overline{Q}_e^\natural)\in {\rm L}^2(\Omega_1,\mathbb{R}^3)\times {\rm L}^{2}(\Omega_1, \text{SO(3)})\}\,,\\
\nonumber X'&:=\{(\varphi^\natural,\overline{Q}_e^\natural)\in {\rm H}^{1}(\Omega_1,\mathbb{R}^3)\times { {{\rm H}^{1}}}(\Omega_1, \text{SO(3)})\}\,,\\
X_\omega&:=
\{(\varphi,\overline{Q}_e)\in {\rm L}^2(\omega,\mathbb{R}^3)\times {\rm L}^{2}(\omega,\text{SO(3)})\}\,,\\
X'_\omega&:=\{(\varphi,\overline{Q}_e)\in {\rm H}^{1}(\omega,\mathbb{R}^3)\times { {{\rm H}^{1}}}(\omega,\text{SO(3)})\}\,.\notag
\end{align}
We also consider the following admissible sets
\begin{align}\label{admi set}
\nonumber\mathcal{S}':=&\;\{(\varphi,\overline{Q}_e)\in {\rm H}^{1}(\Omega_1,\mathbb{R}^3)\times { {{\rm H}^{1}}}(\Omega_1,\text{SO(3)})\,\big|\,\, \varphi|_{\Gamma_1}(\eta)=\varphi^\natural_d(\eta)\}\,,\\
\mathcal{S}'_\omega:=&\;\{(\varphi,\overline{Q}_e)\in {\rm H}^{1}(\omega,\mathbb{R}^3)\times { {{\rm H}^{1}}}(\omega,\text{SO(3)})\,\big|\,\, \varphi|_{\partial \omega}(\eta_1,\eta_2)=\varphi^\natural_d(\eta_1,\eta_2,0)\}\,.
\end{align}
By the  {embedding} theorem (\cite{ciarlet1987mathematical}, Theorem 6.1-3), the  {embedding} $X'\subset X$ is true and clearly\footnote{Since $\infty>\int_{\omega}|\varphi|^2 \,dx\,dy=\int_{\omega}\int_{-1/2}^{1/2}|\varphi|^2 \,dz\, dx\,dy=\int_{\Omega_1}|\varphi|^2\, dV$, which means any element from $X_\omega$, belongs to $X$ as well.} $X_\omega\subset X$, $X'_\omega\subset X'$.

The functionals in our analysis are obtained by extending the functionals $J_h$ (respectively $I_h$) to the entire space $X$ and to take their averages over the thickness,  through
\begin{align}\label{energyinnewarea 1}
\mathcal{I}_h^\natural(\varphi^\natural,  \nabla^h_\eta \varphi^\natural,\overline{Q}_e^\natural,\Gamma_h^\natural) &=
\begin{cases}
\displaystyle\frac{1}{h}\,I_h^\natural(\varphi^\natural,  \nabla^h_\eta \varphi^\natural, \overline{Q}_e^\natural,\Gamma_h^\natural)\quad &\text{if}\;\; (\varphi^\natural,\overline{Q}_e^\natural)\in \mathcal{S}',
\\
+\infty\qquad &\text{else in}\; X.
\end{cases}\\
&=
\begin{cases}
\displaystyle\frac{1}{h}\,J_h^\natural(\varphi^\natural,  \nabla^h_\eta \varphi^\natural, \overline{Q}_e^\natural,\Gamma_h^\natural)-\frac{1}{h} {\Pi}^\natural_h(\varphi^\natural,\overline{Q}_e^\natural)\quad &\text{if}\;\; (\varphi^\natural,\overline{Q}_e^\natural)\in \mathcal{S}',
\\
+\infty\qquad &\text{else in}\; X.\notag
\end{cases}
\end{align}

The main aim of the current paper is to find the $\Gamma$-limit of the family of functional $\mathcal{I}_h^\natural(\varphi^\natural,  \nabla^h_\eta \varphi^\natural,\overline{Q}_e^\natural,\Gamma_h^\natural)$, i.e., to obtain an energy functional expressed only in terms of the weak limit of a  subsequence of  $(\varphi_{h_j}^\natural,\overline{Q}_{e,h_j}^\natural)\in X$, when $h_j$ goes to zero. In other words, as we will see, to construct an energy function depending only on quantities defined on the midsurface of the shell-like domain, see Figure \ref{Fig3} .

\begin{figure}[h!]
	\hspace*{-2cm}
	\begin{center}
		\includegraphics[scale=1.6]{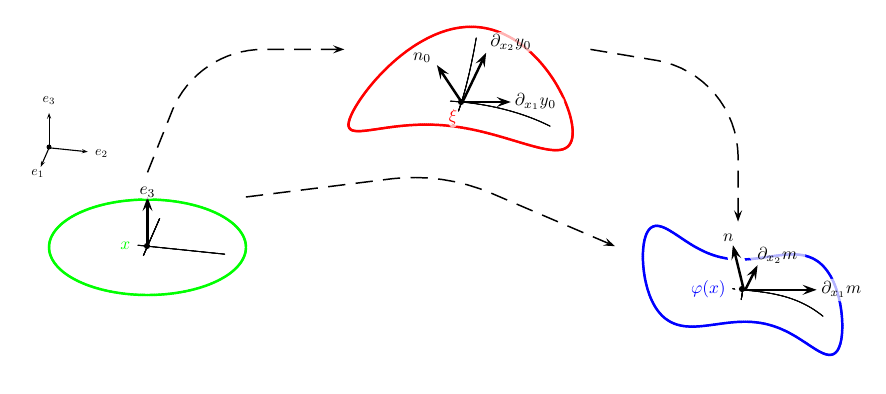}
		% % % 	\includegraphics[ scale=0.75]{Fig1.eps}
		% % % 		\put(-330,45){\footnotesize{$O$}} \put(-358,40){\footnotesize{$\boldsymbol{e}_1$}}
		% % % 		\put(-305,59){\footnotesize{${e}_2$}}
		% % % 		\put(-330,85){\footnotesize{${e}_3$}}
		% % % 		\put(-290,35){\footnotesize{${x}$}} 
		% % % 		%\put(-350,32){\small{$\omega$}} 
		% % % 		\put(-310,5){\footnotesize{$\Omega_h$}} 
		% % % 		\put(-197,104){\footnotesize{$\Omega_\xi$}} 
		% % % 		\put(-185,150){\small{${\xi}$}} 
		% % % 		\put(-60,14){\footnotesize{$\Omega_c$}} 
		% % % 		\put(-75,48){\footnotesize{$\varphi({x})\!=\!\varphi_\xi({\xi})$}} 
		\put(-320,165){\footnotesize{$\Theta  ,  Q_0={\rm polar}(\nabla \Theta(0))$}} 
		\put(-320,175){\footnotesize{$\nabla\Theta(0)=(\nabla y_0|n_0)$}}
		\put(-320,140){\footnotesize{$y_0$}} 
		\put(-285,82){\footnotesize{$\varphi  , \overline{ R}$}}
		%\put(-285,72){\footnotesize{$\mu_{\rm c}\to \infty:\ \overline{ R}\to R_\infty={\rm polar}((\nabla m\,|\,n))$}}
		\put(-250,102){\footnotesize{$m$}}
		\put(-65,125){\footnotesize{$\varphi_\xi,\overline{ Q}_e $}}
		%\put(-110,110){\footnotesize{$\mu_{\rm c}\to \infty:\ \ \  \overline{ Q}_e\to Q_\infty={\rm polar}((\nabla m\,|\,n)[\nabla \Theta]^{-1})$}}
		\put(-390,50){\footnotesize{$\omega$}}  
		\put(-15,30){\footnotesize{$\omega_c$}}  
		\put(-180,175){\footnotesize{$\omega_\xi=y_0(\omega)$}}  
		\caption{\footnotesize Kinematics of the dimensionally reduced Cosserat shell model. All fields are referred to two-dimensional surfaces. The geometry of the curved surface $\omega_\xi$ is fully encoded by the map $\Theta$. Instead of the elastic deformation starting from $\omega_\xi$, the total deformation $m$ from the fictitious flat midsurface $\omega$ is considered, likewise for the total rotation $\overline{ R}$.}
		\label{Fig3}       % Give a unique label
	\end{center}
\end{figure}
As a first step we consider 
the functionals
\begin{align}\label{energyJ}
\mathcal{J}_h^\natural(\varphi^\natural,  \nabla^h_\eta \varphi^\natural,\overline{Q}_e^\natural,\Gamma_h^\natural) 
&=
\begin{cases}
\displaystyle\frac{1}{h}\,J_h^\natural(\varphi^\natural,  \nabla^h_\eta \varphi^\natural, \overline{Q}_e^\natural,\Gamma_h^\natural)\quad &\text{if}\;\; (\varphi^\natural,\overline{Q}_e^\natural)\in \mathcal{S}',
\\
+\infty\qquad &\text{else in}\; X.
\end{cases}
\end{align}
%For $\varphi\in {\rm L}^2(\Omega, \mathbb{R}^3)$ we define the averaging operator over the thickness (transverse) variable $\eta_3$ by
%\begin{align}
%Av:{\rm L}^2(\Omega, \mathbb{R}^3)\to {\rm L}^2(\omega, \mathbb{R}^3),\qquad 	Av.\varphi(x_1,x_2)=\displaystyle\int_{-1/2}^{1/2}\varphi(\eta_1,\eta_2,\eta_3) d\,\eta_3.
%\end{align}
%\begin{lemma}
%	The averaging operator $Av$ has the following properties:
%	\begin{itemize}
%		\item for suitable regular functions $\varphi$, it commutes with differentiation with respect to the planar variables $\eta_1,\eta_2$, i.e., $Av.[\nabla_{(\eta_1,\eta_2)}\varphi(\eta_1,\eta_2,\eta_3)]=\nabla_{(\eta_1,\eta_2)}[Av. \varphi(\eta_1,\eta_2,\eta_3)]$;
%		\item for convex functions $f:\mathbb{R}^{3\times 2}\to \mathbb{R}$ the Jensen's inequality leads to
%		\begin{align}
%		\int_{\omega} f(\nabla_{(\eta_1,\eta_2)}[Av. \varphi])\,d\omega\leq	\int_{\Omega_1} f(\nabla_{(\eta_1,\eta_2)}[\varphi])\,d\Omega. 
%		\end{align}
%	\end{itemize}
%	\end{lemma}
%In the rest of the paper, we identify the thickness average deformation $Av.\varphi$ with the deformation $m:\omega\subset\mathbb{R}^2\to \mathbb{R}^3$ of the midsurface.

\subsection{Equi-coercivity and compactness of  the family $\mathcal{J}_h^\natural$ }\label{equi-comp}

\begin{theorem}
	Assume  that the initial configuration  is defined by  a continuous injective mapping $\,y_0:\omega\subset\mathbb{R}^2\rightarrow\mathbb{R}^3$  which admits an extension to $\overline{\omega}$ into  $C^2(\overline{\omega};\mathbb{R}^3)$ such that $\det[\nabla_x\Theta(0)] \geq\, a_0 >0$ on $\overline{\omega}$, where $a_0$ is a positive constant, and  assume that the boundary data satisfies the conditions
	\begin{equation}\label{25}
	\varphi^\natural_d=\varphi_d\big|_{\Gamma_1} \text{(in the sense of traces) for} \ \varphi_d\in {\rm H}^1(\Omega_1,\mathbb{R}^3).
	\end{equation}
	Consider a sequence $(\varphi_{h_j}^\natural,\overline{Q}_{e,h_j}^\natural)\in X$, such that the energy functionals $\mathcal{J}^\natural_{h_j}(\varphi_{h_j}^\natural,\overline{Q}_{e,h_j}^\natural)$ are bounded as $h_j\to 0$. Let the constitutive parameters satisfy 
	\begin{align}
	\mu\,>0, \quad\quad\quad \kappa>0, \quad\quad\quad \mu_{\rm c}> 0,\quad\quad\quad a_1>0,\quad\quad\quad a_2>0,\quad\quad\quad a_3> 0.
	\end{align} 
%	and
%	\begin{align}\label{ch5in}
%	h_j\,\max \{\sup_{(x_1,x_2)\in {\omega}}|{\kappa_1}|,\sup_{(x_1,x_2)\in {\omega}}|{\kappa_2}|\}<2\qquad \forall \, j\in \mathbb{N},\end{align}
	Then  the sequence $(\varphi_{h_j}^\natural,\overline{Q}_{e,h_j}^\natural)$ admits a subsequence which is weakly convergent to $(\varphi_{0}^\natural,\overline{Q}_{e,0}^\natural)\in X_\omega$.
\end{theorem}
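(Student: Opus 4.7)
The plan is to convert the hypothesis $\mathcal{J}^\natural_{h_j}\leq C$ into uniform $H^1(\Omega_1)$-bounds for both $\varphi_{h_j}^\natural$ and $\overline{Q}_{e,h_j}^\natural$, extract weakly convergent subsequences, and then exploit the nonlinear scaling to show the weak limits depend only on $(\eta_1,\eta_2)$. The first step is to observe that $\det[(\nabla_x\Theta)^\natural(\eta)]\geq a_0/2>0$ uniformly in $\eta\in\Omega_1$ once $h_j$ is small enough, since $(\nabla_x\Theta)^\natural(\eta_1,\eta_2,\eta_3)=\nabla_x\Theta(\eta_1,\eta_2,h_j\eta_3)\to\nabla_x\Theta(0)$ uniformly on $\overline{\omega}$ and $\det\nabla_x\Theta(0)\geq a_0$ by assumption; the assumption \eqref{ch5in} controls precisely this regime. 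Combined with the coercivity inequalities \eqref{pozitivdef} and the strict positivity of all constitutive parameters, the bound on $\mathcal{J}^\natural_{h_j}$ translates into uniform $L^2(\Omega_1)$-bounds on $\UHN-\id_3$ and on $\Gamma^\natural_{h_j}$.

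Next I would transfer these bounds to the scaled gradients. Solving the multiplicative relation $\UHN = Q_0^\natural\,\overline{R}^{\natural,T}\,\nabla_\eta^{h_j}\varphi^\natural\,[(\nabla_x\Theta)^\natural]^{-1}$ for $\nabla_\eta^{h_j}\varphi^\natural$ and using $\|\overline{R}^\natural\|=\|Q_0^\natural\|=\sqrt{3}$ pointwise together with uniform bounds on $(\nabla_x\Theta)^\natural$ and its inverse, I obtain $\|\nabla_\eta^{h_j}\varphi_{h_j}^\natural\|_{L^2(\Omega_1)}\leq C$. This is equivalent to $\|\partial_{\eta_\alpha}\varphi_{h_j}^\natural\|_{L^2}\leq C$ for $\alpha\in\{1,2\}$ and $\|\partial_{\eta_3}\varphi_{h_j}^\natural\|_{L^2}\leq C\,h_j\to 0$. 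Analogously, from the bound on $\Gamma^\natural_{h_j}$, the identity $\|A\|^2=2\|\axl A\|^2$ for $A\in\mathfrak{so}(3)$, and the orthogonality of $\overline{Q}_e^\natural$, I obtain $\|\partial_{\eta_\alpha}\overline{Q}_{e,h_j}^\natural\|_{L^2}\leq C$ for $\alpha=1,2$ and $\|\partial_{\eta_3}\overline{Q}_{e,h_j}^\natural\|_{L^2}\leq C\,h_j\to 0$. Since $\|\overline{Q}_{e,h_j}^\natural\|_{L^\infty}=\sqrt{3}$, this is already a uniform $H^1(\Omega_1)$-bound. For $\varphi_{h_j}^\natural$, combining the gradient bound with the Dirichlet trace condition \eqref{25} via Poincar\'e's inequality on $\Omega_1$ relative to $\Gamma_1$ (noting that $\gamma\subset\partial\omega$ is nontrivial so $\Gamma_1$ has positive surface measure) promotes it to a uniform $H^1(\Omega_1)$-bound.

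Finally, weak $H^1$-compactness produces subsequences $\varphi_{h_j}^\natural\rightharpoonup\varphi_0^\natural$ and $\overline{Q}_{e,h_j}^\natural\rightharpoonup\overline{Q}_{e,0}^\natural$ in $H^1(\Omega_1)$, which by Rellich--Kondrachov converge strongly in $L^2(\Omega_1)$ and, along a further subsequence, pointwise a.e. The strong $L^2$-convergences $\partial_{\eta_3}\varphi_{h_j}^\natural\to 0$ and $\partial_{\eta_3}\overline{Q}_{e,h_j}^\natural\to 0$ force $\partial_{\eta_3}\varphi_0^\natural=0$ and $\partial_{\eta_3}\overline{Q}_{e,0}^\natural=0$, so both limits are independent of $\eta_3$ and descend to functions on $\omega$. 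The pointwise a.e. convergence preserves the constraints $\overline{Q}_{e,0}^{\natural,T}\overline{Q}_{e,0}^\natural=\id_3$ and $\det\overline{Q}_{e,0}^\natural=1$, which places $(\varphi_0^\natural,\overline{Q}_{e,0}^\natural)\in X_\omega$.

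The principal technical obstacle is the transfer from the $L^2$-bound on $\UHN-\id_3$ to the $L^2$-bound on $\nabla_\eta^{h_j}\varphi_{h_j}^\natural$: it requires the uniform invertibility of $(\nabla_x\Theta)^\natural$ as $h_j\to 0$ (ultimately traceable to the curvature smallness condition \eqref{ch5in}) and careful handling of the identity $\id_3$ in $\UHN-\id_3$ via the triangle inequality $\|\UHN\|\leq\|\UHN-\id_3\|+\sqrt{3}$, after which the multiplicative split cleanly delivers the scaled-gradient estimate.
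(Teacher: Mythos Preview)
Your proposal is correct and follows essentially the same route as the paper: coercivity of $W_{\rm mp}$ and $\widetilde{W}_{\rm curv}$ combined with the uniform lower bound on $\det[(\nabla_x\Theta)^\natural]$ yields $L^2$-control of $\UHN-\id_3$ and $\Gamma^\natural_{h_j}$, which is then unwound via the orthogonality of $\overline{Q}_e^\natural$ and the uniform bounds on $(\nabla_x\Theta)^\natural$ and its inverse to produce the scaled-gradient estimates, Poincar\'e delivers the $H^1$-bound for $\varphi_{h_j}^\natural$, and the $O(h_j)$ decay of the $\eta_3$-derivatives forces the weak limits to be $\eta_3$-independent. Your explicit mention of Rellich--Kondrachov and pointwise a.e.\ convergence to preserve the ${\rm SO}(3)$ constraint on $\overline{Q}_{e,0}^\natural$ is a welcome detail that the paper leaves implicit.
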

\begin{proof} Consider the sequence $(\varphi_{h_j}^\natural,\overline{Q}_{e,h_j}^\natural)\in X$, such that the energy functionals $\mathcal{J}^\natural_{h_j}(\varphi_{h_j}^\natural,\overline{Q}_{e,h_j}^\natural)$ are bounded as $h_j\to 0$. Obviously this implies that $(\varphi_{h_j}^\natural,\overline{Q}_{e,h_j}^\natural)\in \mathcal{S}'$ for all $h_j$. 
 We have
		\begin{align}\label{newequi1}
		 2\Big(\norm{\UJN-\id_3}^2+\norm{\id_3}^2\Big)&\geq (\norm{\UJN-\id_3}+\norm{\id_3})^2\geq \norm{\UJN}^2= \norm{\overline{Q}_e^{\natural,T}\nabla_\eta^{h_j}\varphi_{h_j}^\natural[(\nabla_{x}\Theta)^\natural(\eta)]^{-1}}^2\\
		 \nonumber&=\iprod{\overline{Q}_e^{\natural,T}\nabla_\eta^{h_j}\varphi_{h_j}^\natural[(\nabla_{x}\Theta)^\natural(\eta)]^{-1},\overline{Q}_e^{\natural,T}\nabla_\eta^{h_j}\varphi_{h_j}^\natural[(\nabla_{x}\Theta)^\natural(\eta)]^{-1}}=\lVert\nabla_\eta^{h_j}\varphi_{h_j}^\natural[(\nabla_{x}\Theta)^\natural(\eta)]^{-1}\rVert^2.
		\end{align}
	Thus, we deduce with (\ref{newequi1})$_1$
	\begin{align}\label{newequi2} \norm{\UJN-\id_3}^2&\geq\frac{1}{2}\lVert\nabla_\eta^{h_j}\varphi_{h_j}^\natural[(\nabla_{x}\Theta)^\natural(\eta)]^{-1}\rVert^2-3.
\end{align}
But
\begin{align}
 \lVert\nabla_\eta^{h_j}\varphi_{h_j}^\natural\rVert=\lVert\nabla_\eta^{h_j}\varphi_{h_j}^\natural[(\nabla_{x}\Theta)^\natural(\eta)]^{-1}[(\nabla_{x}\Theta)^\natural(\eta)]\rVert
 \leq\lVert\nabla_\eta^{h_j}\varphi_{h_j}^\natural[(\nabla_{x}\Theta)^\natural(\eta)]^{-1}\rVert\cdot\lVert(\nabla_{x}\Theta)^\natural(\eta)]\rVert
\end{align}
and we obtain
\begin{align}\label{alt}
\lVert\nabla_\eta^{h_j}\varphi_{h_j}^\natural[(\nabla_{x}\Theta)^\natural(\eta)]^{-1}\rVert\geq\lVert\nabla_\eta^{h_j}\varphi_{h_j}^\natural\rVert \frac{1}{\lVert(\nabla_{x}\Theta)^\natural(\eta)\rVert}.
\end{align}
From the formula $[(\nabla_x \Theta)^\natural(\eta)]\,=\,(\nabla y_0|n_0)+h_j\eta_3(\nabla n_0|0)$ 
we get
\begin{align}
\notag\lVert(\nabla_{x}\Theta)^\natural(\eta)\rVert&\leq \lVert(\nabla y_0|n_0)\|+h_j\, |\eta_3|\,\lVert(\nabla n_0|0) \rVert\leq \lVert(\nabla y_0|n_0)\|+h_j\, \lVert(\nabla n_0|0) \rVert\\&< \,\lVert(\nabla y_0|n_0)\|+ \lVert(\nabla n_0|0) \rVert.
%\frac{2}{\max \{\sup_{(x_1,x_2)\in {\omega}}|{\kappa_1}|,\sup_{(x_1,x_2)\in {\omega}}|{\kappa_2}|\}}
\end{align}
since $h_j\ll 1$.
Thus
\begin{align}
\frac{1}{\lVert(\nabla_{x}\Theta)^\natural(\eta)\rVert}\geq \frac{1}{\lVert(\nabla y_0|n_0)\|+\lVert(\nabla n_0|0) \rVert}.
\end{align}
Moreover, since 
${y}_0\in C^2(\overline{\omega};\mathbb{R}^3)$, it follows  for $h_j$ small enough that there exists $c_1>0$ such that
$
\frac{1}{\lVert(\nabla_{x}\Theta)^\natural(\eta)\rVert}\geq c_1.
$
Therefore, from \eqref{newequi2} and \eqref{alt}, we get that there exist
$c_1,c_2>0$ such that\\
\begin{align}\label{newequi3} \norm{\UJN-\id_3}^2&\geq\frac{c_1}{2}\lVert\nabla_\eta^{h_j}\varphi_{h_j}^\natural\rVert^2-c_2.
\end{align}
From the hypothesis we have
		\begin{align}\label{comp1}
		 \infty>\mathcal{J}^\natural_{h_j}(\varphi_{h_j}^\natural,\overline{Q}_{e,h_j}^\natural)&\geq\int_{\Omega_1}\Big( {W}_{\rm mp}(\UJN)+\widetilde{W}_{\rm curv}(\Gamma^\natural_{h_j})\Big)\det((\nabla_x\Theta)^\natural)\,dV_\eta \\&\geq \int_{\Omega_1} {W}_{\rm mp}(\UJN)\det((\nabla_x\Theta)^\natural)\,dV_\eta\geq \min(c_1^+\,,\mu_{\rm c}\,)\int_{\Omega_1} \norm{\UJN-\id_3}^2 \det((\nabla_x\Theta)^\natural)dV_\eta,\notag
		\end{align}
		where $c_1^+$  denotes the  smallest  eigenvalue of the quadratic form ${W}_{\mathrm{mp}}^{\infty}( X)$.
		
		Let us recall that ${\rm det}(\nabla_x \Theta(x_3))\,=\,{\rm det}(\nabla y_0|n_0)\Big[1-2\,x_3\,{\rm H}\,+x_3^2 \, {\rm K}\Big]=\,{\rm det}(\nabla y_0|n_0)(1-\kappa_1\,x_3)(1-\kappa_2\, x_3)$, where $\text{H {,} K}$ are the mean curvature and Gauß curvature, respectively. But $(1-\kappa_1\,x_3)(1-\kappa_2\, x_3)>0$, $\forall \, x_3\in \left[-\nicefrac{h_j}{2},\nicefrac{h_j}{2}\right]$ if and only if $h_j$ satisfies the hypothesis  {\eqref{ch5in}}. Therefore, there exists a constant $c>0$ such that\,
		\begin{align}
		{\rm det}(\nabla_x \Theta(x_3))\,\geq c\,{\rm det}(\nabla y_0|n_0)\quad  \forall \ x_3\in \left[-\nicefrac{h}{2},\nicefrac{h}{2}\right].
		\end{align}Due to  the hypothesis $\det[\nabla_x\Theta(0)] \geq\, a_0 >0 $ this implies that 
		there exists a constant $c>0$ such that\,
		\begin{align}\label{detb}
		{\rm det}(\nabla_x \Theta(x_3))\,\geq c\quad  \forall \ x_3\in \left[-\nicefrac{h_j}{2},\nicefrac{h_j}{2}\right],
		\end{align}
	which means that $	{\rm det}(\nabla_x \Theta(x_3)^\natural)\,\geq c\quad  \forall \ x_3\in \left[-\nicefrac{1}{2},\nicefrac{1}{2}\right]$.
	
		Hence, from \eqref{comp1}, \eqref{newequi3} and \eqref{detb}, it follows that for small enough $h_j$ there exist constants $c_1>0$ and $c_2>0$ such that
		\begin{align}\label{bi}
		\infty>\mathcal{J}^\natural_{h_j}(\varphi_{h_j}^\natural,\overline{Q}_{e,h_j}^\natural)&\geq c_1\,\int_{\Omega_1} \lVert\nabla_\eta^{h_j}\varphi_{h_j}^\natural\rVert^2 dV_\eta-c_2\\ &\geq c_1\,\int_{\Omega_1}\left(\norm{\partial_{\eta_1}\varphi_{h_j}^\natural}^2+\norm{\partial_{\eta_2}\varphi_{h_j}^\natural}^2+\frac{1}{h_j^2}\norm{\partial_{\eta_3}\varphi_{h_j}^\natural}^2\right)dV_\eta-c_2.	\notag
		\end{align}
		%Due to the hypothesis $\det[\nabla_x\Theta(0)] \geq\, a_0 >0 $ it follows that there exist constants $c>0$ and $C>0$ such that
		%\begin{align}
	%	\forall \, x\in \overline{\omega}\qquad  c<\lVert [\nabla_x \Theta(0)]^{-1}\rVert\leq C.
		%\end{align}
	%
		%	Hence, for small enough $h$, it follows that there exist constants $c>0$ and $C>0$ such that
		%	\begin{align}\label{norm Theta}
		%	\forall \, x\in \overline{\omega}\qquad  c<\lVert [\nabla_x \Theta(\eta)]^{-1}\rVert\leq C.
		%	\end{align}
		%	Therefore, $\min\{c_1^+,c_2^+,c_3^+\}$ are bounded over $\overline{\omega}$.
		Furthermore, due to the hypothesis on $h_j$, it is clear that there exists $c>0$ such that
		\begin{align}
		\norm{\partial_{\eta_1}\varphi_{h_j}^\natural}^2+\norm{\partial_{\eta_2}\varphi_{h_j}^\natural}^2+\frac{1}{h_j^2}\norm{\partial_{\eta_3}\varphi_{h_j}^\natural}^2\geq \,c\, \left(\norm{\partial_{\eta_1}\varphi_{h_j}^\natural}^2+\norm{\partial_{\eta_2}\varphi_{h_j}^\natural}^2+\norm{\partial_{\eta_3}\varphi_{h_j}^\natural}^2\right),
		\end{align}
		which implies the existence of $c_1,c_2>0$ such that
		\begin{align}\label{bi2}
		\infty>\mathcal{J}^\natural_{h_j}(\varphi_{h_j}^\natural,\overline{Q}_{e,h_j}^\natural)\geq c_1\,\int_{\Omega_1}\underbrace{\left(\norm{\partial_{\eta_1}\varphi_{h_j}^\natural}^2+\norm{\partial_{\eta_2}\varphi_{h_j}^\natural}^2+\norm{\partial_{\eta_3}\varphi_{h_j}^\natural}^2\right)}_{=:\|\nabla_\eta^{h_j} \varphi_{h_j}^\natural\|^2}dV_\eta-c_2.	
		\end{align}
		We also obtain, applying the Poincar\'e--inequality \cite{neff2015poincare}, that  there exists  a constant $C>0$ such that
		\begin{align}\label{pin}
		\nonumber\lVert \nabla_\eta^{h_j} \varphi_{h_j}^\natural\rVert ^2_{{\rm L}^2(\omega)}&=\norm{\nabla_\eta^{h_j} \varphi_{h_j}^\natural-\nabla_\eta^{h_j} \varphi_d+\nabla_\eta^{h_j} \varphi_d}_{{\rm L}^2(\omega)}^2\\
		\nonumber&\geq\, (\lVert  \nabla_\eta^{h_j} (\varphi_{h_j}^\natural-\varphi_d)\rVert_{{\rm L}^2(\omega)}-\lVert  \nabla_\eta^{h_j} \varphi_d\rVert_{{\rm L}^2(\omega)})^2\\
		&=\norm{\nabla_\eta^{h_j} (\varphi_{h_j}^\natural-\varphi_d)}_{{\rm L}^2(\omega)}^2-2\norm{\nabla_\eta^{h_j} (\varphi_{h_j}^\natural-\varphi_d)}_{{\rm L}^2(\omega)}\norm{\nabla_\eta^{h_j} \varphi_d}_{{\rm L}^2(\omega)}+\norm{\nabla_\eta^{h_j} \varphi_d}_{{\rm L}^2(\omega)}^2\\
		\nonumber&\geq\, C\,\lVert  \varphi_{h_j}^\natural-\varphi_d\rVert _{{\rm H}^1(\omega)}^2-2\,\lVert  \varphi_{h_j}^\natural-\varphi_d\rVert _{{\rm H}^1(\omega)} \lVert  \nabla_\eta^{h_j} \varphi_d\rVert_{{\rm L}^2(\omega)}+\lVert  \nabla_\eta^{h_j} \varphi_d\rVert_{{\rm L}^2(\omega)}^2 \\
		\nonumber&\geq\, C\,\lVert  \varphi_{h_j}^\natural-\varphi_d\rVert _{{\rm H}^1(\omega)}^2-\frac{1}{\varepsilon}\,\lVert  \varphi_{h_j}^\natural-\varphi_d\rVert ^2_{{\rm H}^1(\omega)}-\varepsilon \lVert  \nabla_\eta^{h_j} \varphi_d\rVert ^2_{{\rm L}^2(\omega)}+\lVert  \nabla_\eta^{h_j} \varphi_d\rVert_{{\rm L}^2(\omega)}^2 \,\qquad \forall \, \varepsilon>0,
		\end{align}
		where we have used Young's and Poincar\'{e}'s inequality. Therefore, by choosing $\varepsilon>0$ small enough,  \eqref{pin} ensures the existence of  constants $c_1>0$ and $c {_2}\in\mathbb{R}$  such that 
		\begin{align}\label{36}
	\nonumber\lVert  \nabla_\eta^{h_j} \varphi_{h_j}^\natural\rVert ^2_{{\rm L}^2(\omega)}&\geq\, c_1\lVert \varphi_{h_j}^\natural-\varphi_d\rVert_{{\rm H}^1(\omega)}^2 -c_2\geq\, \frac{c_1}{2}2\,(\lVert  \varphi_{h_j}^\natural\rVert_{{\rm H}^1(\omega)}-\lVert \varphi_d\rVert_{{\rm H}^1(\omega)})^2 -c_2\\&\geq\, \frac{c_1}{2}\lVert  \varphi_{h_j}^\natural\rVert_{{\rm H}^1(\omega)}^2+\frac{c_1}{2}\lVert \varphi_d\rVert_{{\rm H}^1(\omega)}^2 -c_2.
		\end{align}
	Thus, there exists $c_1>0$ and $c {_2}\in\mathbb{R}$  such that
		\begin{align}\label{36b}
	\lVert  \nabla_\eta^{h_j} \varphi_{h_j}^\natural\rVert ^2_{{\rm L}^2(\omega)}&\geq\, \frac{c_1}{2}\lVert  \varphi_{h_j}^\natural\rVert_{{\rm H}^1(\omega)}^2-c_2,
	\end{align}
	which implies the  uniform bound for $\varphi_{h_j}^\natural$ in $\mathcal{S}'$.
		On the other hand, since
	\begin{align}
	\norm{\partial_{\eta_1}\varphi_{h_j}^\natural}^2+\norm{\partial_{\eta_2}\varphi_{h_j}^\natural}^2+\frac{1}{h_j^2}\norm{\partial_{\eta_3}\varphi_{h_j}^\natural}^2\geq \frac{1}{h_j^2}\norm{\partial_{\eta_3}\varphi_{h_j}^\natural}^2,
	\end{align}
	from \eqref{bi} it results that $\frac{1}{h_j^2}\norm{\partial_{\eta_3}\varphi_{h_j}^\natural}^2$ is bounded, i.e., there is $c>0$, such that
	\begin{align}
	\norm{\partial_{\eta_3}\varphi_{h_j}^\natural}_{{\rm L}^2(\Omega)}\leq c\,h_j.
	\end{align}
	This means that $\partial_{\eta_3}\varphi_{h_j}^\natural\to 0$ strongly in ${\rm L}^2(\Omega)$, when $h_j\to 0$.
	
	Hence, considering  $(\varphi_{h_j}^\natural,\overline{Q}_{e,h_j}^\natural)\in X$, such that the energy functionals $\mathcal{J}^\natural_{h_j}(\varphi_{h_j}^\natural,\overline{Q}_{e,h_j}^\natural)$ are bounded, it follows that  any limit point  $\varphi_{0}^\natural$  of $\varphi_{h_j}^\natural$ for the weak topology of ${\rm L}^2(\Omega_1, \mathbb{R}^3)$ (which exists due to its uniform boundedness in ${\rm H}^{1}(\omega,\mathbb{R}^3)$) satisfies 
		\begin{align}\label{phi0}
	\partial_{\eta_3}\varphi_{0}^\natural=0 \quad \Rightarrow\quad \varphi_{0}^\natural\in {\rm H}^{1}(\omega,\mathbb{R}^3).
	\end{align}

		Similar arguments for the curvature energy imply that there exists $c>0$ such that
		\begin{align}
		 \infty >&\, \mathcal{J}^\natural_{h_j}(\varphi_{h_j}^\natural,\overline{Q}_{e,h_j}^\natural)\geq \int_{\Omega_1}\widetilde{W}_{\rm curv}(\Gamma^\natural_{h_j})\det((\nabla_x\Theta)^\natural)\,dV_\eta\geq \int_{\Omega_1}c\,\; \norm{\Gamma^\natural_{h_j}}^{2}\det((\nabla_x\Theta)^\natural)\,dV_\eta\\
	\nonumber	=&\,c\int_{\Omega_1}\dynnorm{\Big(\mathrm{axl}(\overline{Q}_{e,h_j}^{\natural,T}\,\partial_{\eta_1} \overline{Q}_{e,h_j}^\natural)\,\Big|\, \mathrm{axl}(\overline{Q}_{e,h_j}^{\natural,T}\,\partial_{\eta_2} \overline{Q}_{e,h_j}^\natural)\,\Big|\,\frac{1}{h_j}\mathrm{axl}(\overline{Q}_{e,h_j}^{\natural,T}\,\partial_{\eta_3} \overline{Q}_{e,h_j}^\natural)\,\Big)[(\nabla_x\Theta)^\natural(\eta) ]^{-1}}^{2}\det((\nabla_x\Theta)^\natural)\,dV_\eta\,.
	\end{align}
In the next step, as in the deduction of \eqref{newequi1}--\eqref{bi}, it will be shown that for $a_1,a_2,a_3>0$ there exists $c>0$ such that 
		\begin{align}\label{gradQ}
		\nonumber \infty &>  c\int_{\Omega_1}\left(\norm{\mathrm{axl}(\overline{Q}_{e,h_j}^{\natural,T}\,\partial_{\eta_1} \overline{Q}_{e,h_j}^\natural)}^2+\norm{\mathrm{axl}(\overline{Q}_{e,h_j}^{\natural,T}\,\partial_{\eta_2} \overline{Q}_{e,h_j}^\natural)}^2+\frac{1}{h_j^2}\norm{\mathrm{axl}(\overline{Q}_{e,h_j}^{\natural,T}\,\partial_{\eta_3} \overline{Q}_{e,h_j}^\natural)}^2\right)\,dV_\eta\,\\
		&=   {\frac{c}{2}}\int_{\Omega_1}\left(\norm{\overline{Q}_{e,h_j}^{\natural,T}\,\partial_{\eta_1} \overline{Q}_{e,h_j}^\natural}^2+\norm{\overline{Q}_{e,h_j}^{\natural,T}\,\partial_{\eta_2} \overline{Q}_{e,h_j}^\natural}^2+\frac{1}{h_j^2}\norm{\overline{Q}_{e,h_j}^{\natural,T}\,\partial_{\eta_3} \overline{Q}_{e,h_j}^\natural}^2\right)\,dV_\eta\,
		\\
		&=   {\frac{c}{2}}\int_{\Omega_1}\left(\norm{\partial_{\eta_1} \overline{Q}_{e,h_j}^\natural}^2+\norm{\partial_{\eta_2} \overline{Q}_{e,h_j}^\natural}^2+\frac{1}{h_j^2}\norm{\partial_{\eta_3} \overline{Q}_{e,h_j}^\natural}^2\right)\,dV_\eta\,.\notag
 		\end{align}
 		
	With the same argument as in the strain part,  we deduce
		\begin{align}
	 \infty &>   c\int_{\Omega_1}\left(\norm{\partial_{\eta_1} \overline{Q}_{e,h_j}^\natural}^2+\norm{\partial_{\eta_2} \overline{Q}_{e,h_j}^\natural}^2+\norm{\partial_{\eta_3} \overline{Q}_{e,h_j}^\natural}^2\right)\,dV_\eta\,,
	\end{align}
	where $c>0$. 
	Hence, it follows  that
	$  \partial_{\eta_i} \overline{Q}_{e,h_j}^\natural$ is bounded in ${\rm L}^2(\Omega_1,\mathbb{R}^{3\times 3})$, for $i=1,2,3$. Since $\overline{Q}_{e,h_j}^\natural\in {\rm SO}(3)$, we have $\lVert \overline{Q}_{e,h_j}^\natural\rVert ^2=3$ and therefore $\overline{Q}_{e,h_j}^\natural$ is bounded in ${\rm L}^2(\Omega_1,\mathbb{R}^{3\times 3})$. Hence, we can infer that the sequence $ \overline{Q}_{e,h_j}^\natural$
	is bounded in ${ {{\rm H}^{1}}}(\Omega_1,\SO(3))$, independently from $h_j$.

	Therefore, there is a subsequence from $\overline{Q}_{e,h_j}^\natural $ which is weakly convergent (without relabeling) to $\overline{Q}_{e,0}^\natural$. That is
	\begin{align} \overline{Q}_{e,h_j}^\natural \rightharpoonup \overline{Q}_{e,0}^\natural\quad\text{in} \quad { {{\rm H}^{1}}}(\Omega_1,\SO(3))\,.
	\end{align}
	In addition, from \eqref{gradQ}, we also obtain that there exists $c>0$ such that $	c\,h_j> \norm{\partial_{\eta_3} \overline{Q}_{e,h_j}^\natural}_{{\rm L}^2(\Omega_1,\SO(3))}$. This means that $\partial_{\eta_3} \overline{Q}_{e,h_j}^\natural\to 0$ strongly in ${\rm L}^2(\Omega_1 ,\SO(3))$, when $h_j\to 0$.
	 Hence, considering  $(\varphi_{h_j}^\natural,\overline{Q}_{e,h_j}^\natural)\in X$, such that the energy functionals $\mathcal{J}^\natural_{h_j}(\varphi_{h_j}^\natural,\overline{Q}_{e,h_j}^\natural)$ are bounded, it follows that  any limit point  $\overline{Q}_{e,0}^\natural$  of $\overline{Q}_{e,h_j}^\natural$ for the weak topology of $X$  satisfies 
	\begin{align}\label{Q0}
	\partial_{\eta_3}\overline{Q}_{e,0}^\natural =0 \quad \Rightarrow\quad \overline{Q}_{e,0}^\natural\in { {{\rm H}^{1}}}(\omega,\SO(3)).
	\end{align}
	From \eqref{phi0}, \eqref{Q0} and due to the continuity of the trace operator we obtain that  considering  $(\varphi_{h_j}^\natural,\overline{Q}_{e,h_j}^\natural)\in X$, such that the energy functionals $\mathcal{J}^\natural_{h_j}(\varphi_{h_j}^\natural,\overline{Q}_{e,h_j}^\natural)$ are bounded, it follows that  any limit point  $(\varphi_{0}^\natural,\overline{Q}_{e,0}^\natural)$   for the weak topology of $X$  belongs to $\mathcal{S}'_\omega$ (since actually, such a sequence belongs to $\mathcal{S}'$).	
\end{proof}

Since the embedding $X'\subset X$ is compact, it follows that the set of the sequence of energies due to the scaling is a subset of $X'$, and hence, we have obtained that the family of energy functionals 
$
J^\natural_h$ is equi-coercive with respect to $X$.

\section{The construction of the $\Gamma$-limit $J_0$ of the rescaled energies}\label{rescaled}

In this section we construct the $\Gamma$-limit of the rescaled energies
\begin{align}\label{energyinnewarea}
\mathcal{J}_h^\natural(\varphi^\natural,  \nabla^h_\eta \varphi^\natural,\overline{Q}_e^\natural,\Gamma_h^\natural) 
&=
\begin{cases}
\displaystyle\frac{1}{h}\,J_h^\natural(\varphi^\natural,  \nabla^h_\eta \varphi^\natural, \overline{Q}_e^\natural,\Gamma_h^\natural)\quad &\text{if}\;\; (\varphi^\natural,\overline{Q}_e^\natural)\in \mathcal{S}',
\\
+\infty\qquad &\text{else in}\; X,
\end{cases}
\end{align}
with 
\begin{align}
J_h^\natural(\varphi^\natural,  \nabla^h_\eta \varphi^\natural, \overline{Q}_e^\natural,\Gamma_h^\natural)=\int_{\Omega_1}\; h \;\Big[\Big({{W}}_{\rm mp}(\UHN)+\widetilde{W}_{\rm curv}(\Gamma^\natural_h)\Big)\det((\nabla_x\Theta)^\natural )\Big]\;dV_\eta.
\end{align}

\subsection{Auxiliary optimization problem}\label{auxi}
For  $\varphi^\natural:\Omega_1\rightarrow \mathbb{R}^3$ and $\overline{Q}_{e}^\natural:\Omega_1\rightarrow{\rm SO}(3)$ we associate the non fully dimensional reduced elastic shell stretch  tensor
\begin{align}
\overline{U}_{\varphi^\natural,\overline{Q}_{e}^{\natural}}:=\overline{Q}_{e}^{\natural, T}(\nabla_{(\eta_1,\eta_2)} \varphi^\natural|0)[(\nabla_x\Theta)^\natural ]^{-1}\,,
\end{align}
and the non fully dimensional reduced elastic shell strain  tensor 
\begin{align}
\mathcal{E}_{\varphi^\natural,\overline{Q}_{e}^{\natural} }:=(\overline{Q}_{e}^{\natural, T}\nabla_{(\eta_1,\eta_2)} \varphi^\natural-(\nabla y_0)^\natural |0)[(\nabla_x\Theta)^\natural ]^{-1}=\overline{U}_{\varphi^\natural,\overline{Q}_{e}^{\natural}}-((\nabla y_0)^\natural|0)[(\nabla_x\Theta)^\natural ]^{-1}\,.
\end{align} 
Here, "non-fully" means that the introduced quantities still depend on $\eta_3$ and $h$, because the elements $\nabla_{(\eta_1,\eta_2)}\varphi^\natural$ still depend on $\eta_3$ and $\overline{ Q}^{\natural , T}$ depends on $h$.\\
For reaching our goal we need to solve the following optimization problem: for  $\varphi^\natural:\Omega_1\rightarrow \mathbb{R}^3$ and $\overline{Q}_{e}^\natural:\Omega_1\rightarrow{\rm SO}(3)$, we determine a vector $d^*\in \R^3$ through
\begin{align}\label{hom inf}
 {W}_{\rm mp}^{{\rm hom},\natural}(  \mathcal{E}_{\varphi^\natural,\overline{Q}_{e}^{\natural} })= {W}_{\rm mp}\Big(\overline{Q}_{e}^{\natural,T}(\nabla_{(\eta_1,\eta_2)} \varphi^\natural|d^*)[(\nabla_x\Theta)^\natural ]^{-1}\Big):=\inf_{c\in \mathbb{R}^3} {W}_{\rm mp}\Big(\overline{Q}_{e}^{\natural,T}(\nabla_{(\eta_1,\eta_2)} \varphi^\natural|c)[(\nabla_x\Theta)^\natural ]^{-1}\Big).
\end{align}
The motivation for this optimization problem is to minimize the effect of the derivative in the $\eta_3$-direction in the local energy $ {W}_{\rm mp}$.  Due to the coercivity and continuity of the energy $ {W}_{\rm mp}$, it is clear that this function is well defined and the infimum is attained. Note that  $\varphi^\natural$ and  $\overline{Q}_{e}^\natural$ depend on $\eta_3$ and $h$. Hence $ {W}_{\rm mp}^{{\rm hom},\natural}(  \mathcal{E}_{\varphi^\natural,\overline{Q}_{e}^{\natural} })$ depends on $\eta_3$ and $h$.
While it is not immediately clear why $ {W}_{\rm mp}\Big(\overline{Q}_{e}^{\natural,T}(\nabla_{(\eta_1,\eta_2)} \varphi^\natural|d^*)[(\nabla_x\Theta)^\natural ]^{-1}\Big)$ can be expressed as a function of $ \mathcal{E}_{\varphi^\natural,\overline{Q}_{e}^{\natural} }$, this aspect will be clarified in the rest of this subsection.

 {
We do some lengthy but straightforward calculations in Appendix \ref{Calculhom} and after using the fact that \break $[\nabla_x\Theta]^{-T}e_3=n_0$ and $[(\nabla_x\Theta)^\natural]^{-T}e_3=n_0$, as well, we obtain the minimizer $d^*$ from \eqref{hom inf} as
\begin{align}
 d^*&=\Big(1-\frac{\lambda}{2\,\mu\,+\lambda}\iprod{  \mathcal{E}_{\varphi^\natural,\overline{Q}_{e}^{\natural} } ,\id_3}\Big) \overline{Q}_{e}^\natural n_0+\frac{\mu_c\,-\mu\,}{\mu_c\,+\mu\,}\;\overline{Q}_{e}^\natural  \mathcal{E}_{\varphi^\natural,\overline{Q}_{e}^{\natural} }^T n_0.
\end{align}
In terms of $\overline{Q}_{e}^\natural=\overline{R}^\natural Q_0^{\natural,T}$ we obtain the following expression for $d^*$
\begin{align}\label{formula b}
\nonumber d^*&=\Big(1-\frac{\lambda}{2\,\mu\,+\lambda}\iprod{(Q_0^{\natural}\overline{R}^{\natural,T}\nabla_{(\eta_1,\eta_2)} \varphi^\natural-(\nabla y_0)^\natural |0)[(\nabla_x\Theta)^\natural]^{-1},\id_3}\Big) \overline{R}^{\natural}Q_0^{\natural,T}n_0\\
&\qquad+\frac{\mu_c\,-\mu\,}{\mu_c\,+\mu\,}\;\overline{R}^{\natural}Q_0^{\natural,T}\Big((Q_0^{\natural}\overline{R}^{\natural,T}\nabla_{(\eta_1,\eta_2)} \varphi^\natural-(\nabla y_0)^\natural |0)[(\nabla_x\Theta)^\natural]^{-1}\Big)^Tn_0.
\end{align}}

 {Inserting $d^*$ in the strain energy 
$ {{W}}_{\rm mp}(\UHN)=\;\mu\,\norm{\sym (\UHN-\id_3)}^2+\mu_c\,\norm{\skew (\UHN-\id_3)}^2 + \frac{\lambda}{2}[\tr(\sym(\UHN-\id_3))]^2\,
$}
and using (\ref{symter}), (\ref{skew}) and (\ref{trace}), we obtain the explicit form of the homogenized energy for the membrane part
\begin{align}
\nonumber  {W}_{\rm mp}^{{\rm hom},\natural}(  \mathcal{E}_{\varphi^\natural,\overline{Q}_{e}^{\natural} })&=\mu\,\norm{\sym\mathcal{E}_{\varphi^\natural,\overline{Q}_{e}^{\natural} }}^2 +\frac{\mu\,}{2}\frac{(\mu_c\,-\mu\,)^2}{(\mu_c\,+\mu\,)^2}\norm{\mathcal{E}_{\varphi^\natural,\overline{Q}_{e}^{\natural} }^Tn_0}^2+\frac{\mu\,(\mu_c\,-\mu\,)}{(\mu_c\,+\mu\,)}\norm{\mathcal{E}_{\varphi^\natural,\overline{Q}_{e}^{\natural} }^Tn_0}^2\\ &\quad+\frac{\mu\,\lambda^2}{(2\,\mu\,+\lambda)^2}\tr(\mathcal{E}_{\varphi^\natural,\overline{Q}_{e}^{\natural} })^2+\mu_c\,\norm{\skew(\overline{Q}_{e}^{\natural,T}(\nabla_{(\eta_1,\eta_2)} \varphi^\natural|0)[(\nabla_x\Theta)^\natural ]^{-1})}^2\\&\quad +\frac{\mu_c\,}{2}\frac{(\mu_c\,-\mu\,)^2}{(\mu_c\,+\mu\,)^2}\norm{\mathcal{E}_{\varphi^\natural,\overline{Q}_{e}^{\natural} }^Tn_0}^2-\frac{\mu_c\,(\mu_c\,-\mu\,)}{(\mu_c\,+\mu\,)}\norm{\mathcal{E}_{\varphi^\natural,\overline{Q}_{e}^{\natural} }^Tn_0}^2+\frac{2\,\mu\,^2\lambda}{(2\,\mu\,+\lambda)^2}\tr(\mathcal{E}_{\varphi^\natural,\overline{Q}_{e}^{\natural} })^2,\notag
\end{align}
and finally
\begin{align}\label{final hom}
  {W}_{\rm mp}^{{\rm hom},\natural}(  \mathcal{E}_{\varphi^\natural,\overline{Q}_{e}^{\natural} }) &=W_{\mathrm{shell}}(  \mathcal{E}_{\varphi^\natural,\overline{Q}_{e}^{\natural} })   - \frac{(\mu_c\,-\mu\,)^2}{2(\mu_c\,+\mu\,)}\norm{\mathcal{E}_{\varphi^\natural,\overline{Q}_{e}^{\natural} }^Tn_0}^2,
\end{align}
where
\begin{align}
W_{\mathrm{shell}}(  X) & = \mu\,\lVert  \mathrm{sym} \,X\rVert^2 +  \mu_{\rm c}\,\lVert \mathrm{skew} \,X\rVert^2 +\,\dfrac{\lambda\,\mu\,}{\lambda+2\,\mu\,}\,\big[ \mathrm{tr}   \,X\,\big]^2.\notag
\end{align}
Using the orthogonal decomposition in the tangential plane and in the normal direction, gives
\begin{align}\label{orthog decom}
X=X^\parallel+X^\perp, \quad\qquad  X^\parallel\coloneqq{\rm A}_{y_0} \,X, \quad \qquad X^\perp\coloneqq(\id_3-{\rm A}_{y_0}) \,X,
\end{align} and  we deduce that for all $X\,=\,(*|*|0)\cdot[	\nabla_x \Theta(0)]^{-1}$ we have the following split in the expression of the considered quadratic forms
\begin{align}
W_{\mathrm{shell}}(  X) & =\,   \mu\,\lVert  \mathrm{sym}   \,X^\parallel\rVert^2 +  \mu_{\rm c}\,\lVert \mathrm{skew}  \,X^\parallel\rVert^2+\,   \frac{\mu\,+\mu_{\rm c}}{2}\,\lVert X^\perp\rVert^2 +\,\dfrac{\lambda\,\mu\,}{\lambda+2\,\mu\,}\,\big[ \mathrm{tr}   (X)\big]^2.
\end{align}
Moreover, using that for all $X\,=\,(*|*|0)\,[	\nabla_x \Theta(0)]^{-1}$,  it holds that
\begin{align}\label{symA1-A}
\tr(X^\perp)= \tr\big( (\id_3-{\rm A}_{y_0}) X \big) = \tr(X)-\tr({\rm A}_{y_0} X ) = \tr(X)-\tr(X\,{\rm A}_{y_0} )
% % % % % % % &\, \bigl\langle  (0|0|n_0)\,[	\nabla_x \Theta(0)]^{-1} (*|*|0) \, [	\nabla_x \Theta(0)]^{-1}, \id  \bigr\rangle   = \bigl\langle   (*|*|0) \, [	\nabla_x \Theta(0)]^{-1}(0|0|n_0)\,[	\nabla_x \Theta(0)]^{-1}, \id  \bigr\rangle   \notag\\
% % % % % % % =&\, \bigl\langle   (*|*|0) \, \begin{footnotesize} \begin{pmatrix}
% % % % % % % 0&0&0\\0&0&0\\ 0&0&1
% % % % % % % \end{pmatrix}\end{footnotesize}\,[	\nabla_x \Theta(0)]^{-1}, \id  \bigr\rangle   
=0,
\end{align}
we obtain 
\begin{align}\label{expW}
W_{\mathrm{shell}}\big(    \mathcal{E}_{\varphi^\natural,\overline{Q}_{e}^{\natural}} \big)=&\, \mu\,\lVert  \mathrm{sym}\,   \,\mathcal{E}_{\varphi^\natural,\overline{Q}_{e}^{\natural} }^{\parallel}\rVert^2 +  \mu_{\rm c}\,\lVert\mathrm{skew}\,   \,\mathcal{E}_{\varphi^\natural,\overline{Q}_{e}^{\natural} }^{\parallel}\rVert^2 +\,\dfrac{\lambda\,\mu\,}{\lambda+2\,\mu\,}\,\big[ \mathrm{tr}     (\mathcal{E}_{\varphi^\natural,\overline{Q}_{e}^{\natural} }^{\parallel})\big]^2+\frac{\mu\, +  \mu_{\rm c}}{2}\,\lVert \mathcal{E}_{\varphi^\natural,\overline{Q}_{e}^{\natural} }^{\perp}\rVert^2\\
=&\, \mu\,\lVert  \mathrm{sym}\,   \,\mathcal{E}_{\varphi^\natural,\overline{Q}_{e}^{\natural} }^{\parallel}\rVert^2 +  \mu_{\rm c}\,\lVert \mathrm{skew}\,   \,\mathcal{E}_{\varphi^\natural,\overline{Q}_{e}^{\natural} }^{\parallel}\rVert^2 +\,\dfrac{\lambda\,\mu\,}{\lambda+2\,\mu\,}\,\big[ \mathrm{tr}    (\mathcal{E}_{\varphi^\natural,\overline{Q}_{e}^{\natural} }^{\parallel})\big]^2+\frac{\mu\, +  \mu_{\rm c}}{2}\,\lVert \mathcal{E}_{\varphi^\natural,\overline{Q}_{e}^{\natural} }^T\,n_0\rVert^2.\notag
\end{align}

Therefore, the  homogenized energy for the membrane part is
\begin{align}\label{hommpnat}
\nonumber  {W}_{\rm mp}^{{\rm hom},\natural}(  \mathcal{E}_{\varphi^\natural,\overline{Q}_{e}^{\natural} })&=
\, \mu\,\lVert  \mathrm{sym}\,   \,\mathcal{E}_{\varphi^\natural,\overline{Q}_{e}^{\natural} }^{\parallel}\rVert^2 +  \mu_{\rm c}\,\lVert \mathrm{skew}\,   \,\mathcal{E}_{\varphi^\natural,\overline{Q}_{e}^{\natural} }^{\parallel}\rVert^2 +\,\dfrac{\lambda\,\mu\,}{\lambda+2\,\mu\,}\,\big[ \mathrm{tr}    (\mathcal{E}_{\varphi^\natural,\overline{Q}_{e}^{\natural} }^{\parallel})\big]^2\\&\qquad +\frac{\mu\, +  \mu_{\rm c}}{2}\,\lVert \mathcal{E}_{\varphi^\natural,\overline{Q}_{e}^{\natural} }^T\,n_0\rVert^2  - \frac{(\mu_c\,-\mu\,)^2}{2(\mu_c\,+\mu\,)}\norm{\mathcal{E}_{\varphi^\natural,\overline{Q}_{e}^{\natural} }^Tn_0}^2\notag\\
&=
\, \mu\,\lVert  \mathrm{sym}\,   \,\mathcal{E}_{\varphi^\natural,\overline{Q}_{e}^{\natural} }^{\parallel}\rVert^2 +  \mu_{\rm c}\,\lVert \mathrm{skew}\,   \,\mathcal{E}_{\varphi^\natural,\overline{Q}_{e}^{\natural} }^{\parallel}\rVert^2 +\,\dfrac{\lambda\,\mu\,}{\lambda+2\,\mu\,}\,\big[ \mathrm{tr}    (\mathcal{E}_{\varphi^\natural,\overline{Q}_{e}^{\natural} }^{\parallel})\big]^2 +\frac{2\,\mu\, \,  \mu_{\rm c}}{\mu_c\,+\mu\,}\norm{\mathcal{E}_{\varphi^\natural,\overline{Q}_{e}^{\natural} }^Tn_0}^2\\
&=W_{\mathrm{shell}}\big(   \mathcal{E}_{\varphi^\natural,\overline{Q}_{e}^{\natural} }^{\parallel} \big)+\frac{2\,\mu\, \,  \mu_{\rm c}}{\mu_c\,+\mu\,}\lVert \mathcal{E}_{\varphi^\natural,\overline{Q}_{e}^{\natural} }^{\perp}\rVert^2.\notag
\end{align}
%	\subsection{Reissner - Mindlin model}
%As mentioned in the introduction, the Reissner - Mindlin model is based on the deformation with 3 components, $v: w\subset \mathbb{R}^2\rightarrow \mathbb{R}^3$ and $2$-dimensional out of plate rotation $\theta: w\subset \mathbb{R}^2\rightarrow \mathbb{R}^2$. The model is
%\begin{align}
%\nonumber\int_{w} h&\Big(\mu\,\norm{\sym \nabla(v_1,v_2)}^2+\frac{\kappa\mu\,}{2}\dynnorm{\nabla v_3-\begin{pmatrix}
%	\theta_1\\ \theta_2
%	\end{pmatrix}}+\frac{\mu\,\lambda}{2\,\mu\,+\lambda}\text{tr}[\sym\nabla(v_1,v_2)]^2\Big)\\
%&+\frac{h^3}{12}\Big(\mu\,\norm{\sym\nabla(\theta_1,\theta_2)}^2+\frac{\mu\,\lambda}{2\,\mu\,+\lambda}\tr[\sym\nabla(\theta_1,\theta_2)]^2\Big)-<f,v>\;d\omega\mapsto \text{min}.\quad \text{w.r.t} \quad(v,\theta),
%\end{align}
%where $f$ is as the volume forces.

\subsection{Homogenized membrane energy}\label{homogen mem}

Now, we will be able to propose the form of the homogenized membrane energy. To each pair $(m, \overline{Q}_{e,0})$, where $m:\omega\to \mathbb{R}^3$,  $\overline{Q}_{e,0}:\omega\to {\rm SO}(3)$, we  associate the \textit{elastic shell strain tensor}
\begin{align}
\mathcal{E}_{m,s} :=(\overline{Q}^{T}_{e,0}\nabla m-\nabla y_0 |0)[\nabla_x\Theta(0)]^{-1}\,,
\end{align} 
and we define the homogenized energy
\begin{align}
 {W}_{\rm mp}^{\rm hom}
(\mathcal{E}_{m,s}):=\inf_{\widetilde{d}\in \mathbb{R}^3} {W}_{\rm mp}\Big(\overline{Q}^{T}_{e,0}(\nabla m|\widetilde{d})[(\nabla_x\Theta)(0)]^{-1}\Big)=\inf_{\widetilde{d}\in \mathbb{R}^3} {W}_{\rm mp}\Big(\mathcal{E}_{m,s}-(0|0|\widetilde{d})[(\nabla_x\Theta)(0)]^{-1}\Big).
\end{align}
Direct calculations as in the previous subsection (\ref{auxi}) show us that the infimum is attained for
\begin{align}\label{formula d}
 \widetilde{d}^*&=\Big(1-\frac{\lambda}{2\,\mu\,+\lambda}\iprod{  \mathcal{E}_{m,s} ,\id_3}\Big) \overline{Q}_{e,0} n_0+\frac{\mu_c\,-\mu\,}{\mu_c\,+\mu\,}\;\overline{Q}_{e,0} \mathcal{E}_{m,s}^T n_0\,,
\end{align}
and
\begin{align}\label{hominf_2D}
 {W}_{\rm mp}^{\rm hom}
(\mathcal{E}_{m,s})&=
\, \mu\,\lVert  \mathrm{sym}\,   \,\mathcal{E}_{m,s}^{\parallel}\rVert^2 +  \mu_{\rm c}\,\lVert \mathrm{skew}\,   \,\mathcal{E}_{m,s}^{\parallel}\rVert^2 +\,\dfrac{\lambda\,\mu\,}{\lambda+2\,\mu\,}\,\big[ \mathrm{tr}    (\mathcal{E}_{m,s}^{\parallel})\big]^2 +\frac{2\,\mu\, \,  \mu_{\rm c}}{\mu_c\,+\mu\,}\norm{\mathcal{E}_{m,s}^Tn_0}^2\\
&=W_{\mathrm{shell}}\big(   \mathcal{E}_{m,s}^{\parallel} \big)+\frac{2\,\mu\, \,  \mu_{\rm c}}{\mu_c\,+\mu\,}\lVert \mathcal{E}_{m,s}^{\perp}\rVert^2,\notag
\end{align}
where 
\begin{align}
W_{\mathrm{shell}}\big(   \mathcal{E}_{m,s}^{\parallel} \big)=
\, \mu\,\lVert  \mathrm{sym}\,   \,\mathcal{E}_{m,s}^{\parallel}\rVert^2 +  \mu_{\rm c}\,\lVert \mathrm{skew}\,   \,\mathcal{E}_{m,s}^{\parallel}\rVert^2 +\,\dfrac{\lambda\,\mu\,}{\lambda+2\,\mu\,}\,\big[ \mathrm{tr}    (\mathcal{E}_{m,s}^{\parallel})\big]^2.
\end{align}

Note that $ {W}_{\rm mp}^{{\rm hom},\natural}
(  \mathcal{E}_{\varphi^\natural,\overline{Q}_{e}^{\natural} })$ constructed in (\ref{hommpnat})  depends  on $\eta_3$ and $h$, while $ {W}_{\rm mp}^{\rm hom}
(\mathcal{E}_{m,s})$ in (\ref{hominf_2D}) does not depend on $\eta_3$ and $h$, since $\overline{Q}_{e,0}$ and $[(\nabla_x\Theta)(0)]$ do not depend on  $\eta_3$ and $h$.

\subsection{Homogenized curvature energy}\label{homogen curv}
We define the homogenized curvature energy as
\begin{align}\label{inf curv}
\nonumber \widetilde{W}^{{\rm hom},\natural}_{\rm curv}( \mathcal{K}_{e}^\natural):&=\widetilde{W}_{\rm curv}\Big(\mathrm{axl}(\overline{Q}_e^{\natural,T}\,\partial_{\eta_1} \overline{Q}_e^\natural)\,|\, \mathrm{axl}(\overline{Q}_e^{\natural,T}\,\partial_{\eta_2} \overline{Q}_e^\natural)\,|\,\axl\.(A^*)\,\Big)[(\nabla_x\Theta)^\natural ]^{-1}\\
&=\inf_{A\in \mathfrak{so}(3)}\widetilde{W}_{\rm curv}\Big(\mathrm{axl}(\overline{Q}_e^{\natural,T}\,\partial_{\eta_1} \overline{Q}_e^\natural)\,|\, \mathrm{axl}(\overline{Q}_e^{\natural,T}\,\partial_{\eta_2} \overline{Q}_e^\natural)\,|\,\axl\.(A)\,\Big)[(\nabla_x\Theta)^\natural ]^{-1},
\end{align}
where
\begin{align}
\mathcal{K}_{e}^\natural & :\,=\,  \Big(\mathrm{axl}(\overline{Q}_{e}^{\natural,T}\,\partial_{\eta_1} \overline{Q}_{e}^\natural)\,|\, \mathrm{axl}(\overline{Q}_{e}^{\natural,T}\,\partial_{\eta_2} \overline{Q}_{e}^\natural)\,|0\Big)[(\nabla_x\Theta)^\natural \,]^{-1}\notag\,,
\end{align}
represents a not fully reduced elastic shell bending-curvature tensor, in the sense that it still depends on $\eta_3$ and $h$, since  $\overline{Q}_{e}^\natural=\overline{Q}_{e}^\natural(\eta_1,\eta_2,\eta_3)$. Therefore, $\widetilde{W}_{\rm curv}^{{\rm hom},\natural}( \mathcal{K}_{e}^\natural)$ given by the above definitions still depends on $\eta_3$ and $h$.

As in the case of the homogenized membrane part in (\ref{boun co}), from which we obtained the unknown $d^*$,  one can explicitly determine the infinitesimal microrotation $A^*\in \mathfrak{so}(3)$ as well. Ghiba et.~al, in \cite{Ghiba2022} obtained the homogenized quadratic curvature energy (see Appendix \ref{homcurghiba}  {for its explicit form}). Presently, it is enough to see that $\widetilde{W}_{\rm curv}^{\rm hom}$ is uniquely defined and has the other requirements like remaining convex in its argument and having the same growth as $\widetilde{W}_{\rm curv}$. Therefore,
\begin{align}\label{last hom curv}
\widetilde{W}_{\rm curv}\Big(\Gamma^\natural_h\Big)\geq \widetilde{W}_{\rm curv}^{{\rm hom},\natural}(\mathcal{K}^\natural _e),
\end{align}
i.e.,
\begin{align}
\widetilde{W}_{\rm curv}\Big(\big(\mathrm{axl}(\overline{Q}_{e,h}^{\natural,T}\,\partial_{\eta_1} \overline{Q}_{e,h}^\natural)\,|\, \mathrm{axl}(\overline{Q}_{e,h}^{\natural,T}\,&\partial_{\eta_2} \overline{Q}_{e,h}^\natural)\,|\,\frac{1}{h}\mathrm{axl}(\overline{Q}_{e,h}^{\natural,T}\,\partial_{\eta_3} \overline{Q}_{e,h}^\natural)\,\big)[(\nabla_x\Theta)^\natural ]^{-1}\Big)\\
\nonumber&\geq \widetilde{W}_{\rm curv}^{{\rm hom},\natural}\Big(\big(\mathrm{axl}(\overline{Q}_e^{\natural,T}\,\partial_{\eta_1} \overline{Q}_e^\natural)\,|\, \mathrm{axl}(\overline{Q}_e^{\natural,T}\,\partial_{\eta_2} \overline{Q}_e^\natural)\,|0\,\big)[(\nabla_x\Theta)^\natural ]^{-1}\Big)\,,
\end{align} 
where this relation will help us in  {S}ubsection \ref{liminf} to show the $\liminf$ condition for the curvature energy. 

In order to construct the $\Gamma$-limit, we have to define a homogenized curvature energy. This energy will be expressed in terms of 
the elastic shell bending-curvature tensor
\begin{align}
\mathcal{K}_{e,s} & :\,=\,  \Big(\mathrm{axl}(\overline{Q}^{T}_{e,0}\,\partial_{x_1} \overline{Q}_{e,0})\,|\, \mathrm{axl}(\overline{Q}^{T}_{e,0}\,\partial_{x_2} \overline{Q}_{e,0})\,|0\Big)[\nabla_x\Theta (0)\,]^{-1}\not\in {\rm Sym}(3) \quad \text{\it\  elastic shell bending--curvature tensor\,,}\notag
\end{align}
which will be defined for any $\overline{Q}_{e,0}:\omega\to {\rm SO}(3)$. 
For   $\overline{Q}_{e,0}:\omega\rightarrow{\rm SO}(3)$, we set
\begin{align} \widetilde{W}^{{\rm hom}}_{\rm curv}(\mathcal{K}_{e,s}):&=\widetilde{W}_{\rm curv}^*\Big(\mathrm{axl}(\overline{Q}^{T}_{e,0}\,\partial_{\eta_1} \overline{Q}_{e,0})\,|\, \mathrm{axl}(\overline{Q}^{T}_{e,0}\,\partial_{\eta_2} \overline{Q}_{e,0})\,|\,\axl\.(A^*)\,\Big)[(\nabla_x \Theta)^\natural(0)]^{-1}\\
&=\inf_{A\in \mathfrak{so}(3)}\widetilde{W}_{\rm curv}\Big(\mathrm{axl}(\overline{Q}^{T}_{e,0}\,\partial_{\eta_1} \overline{Q}_{e,0})\,|\, \mathrm{axl}(\overline{Q}^{T}_{e,0}\,\partial_{\eta_2} \overline{Q}_{e,0})\,|\,\axl\.(A)\,\Big)[(\nabla_x \Theta)^\natural(0)]^{-1}\,.\notag
\end{align}

Again note that while $\widetilde{W}_{\rm curv}^{{\rm hom},\natural}( \mathcal{K}_{e}^\natural)$ (previously constructed) depends  on $\eta_3$ and $h$, $\widetilde{W}^{{\rm hom}}_{\rm curv}(\mathcal{K}_{e,s})$ does not depend on $\eta_3$ and $h$, since $\overline{Q}_{e,0}$ and $[(\nabla_x\Theta)(0)]$ do not depend on  $\eta_3$ and $h$.

\section{$\Gamma$-convergence of $\mathcal{J}_{h_j}$ }\label{result gamma}

We are now ready  to formulate the main result of this paper
\begin{theorem}\label{Theorem homo}
		Assume  that the initial configuration of the curved shell is defined by  a continuous injective mapping $\,y_0:\omega\subset\mathbb{R}^2\rightarrow\mathbb{R}^3$  which admits an extension to $\overline{\omega}$ into  $C^2(\overline{\omega};\mathbb{R}^3)$ such that for $$\Theta(x_1,x_2,x_3)=y_0(x_1,x_2)+x_3\, n_0(x_1,x_2)$$ we have $\det[\nabla_x\Theta(0)] \geq\, a_0 >0$ on $\overline{\omega}$,
	where $a_0$ is a constant, and  assume that the boundary data satisfy the conditions
	\begin{equation}\label{2n5}
	\varphi^\natural_d=\varphi_d\big|_{\Gamma_1} \text{(in the sense of traces) for } \ \varphi_d\in {\rm H}^1(\Omega_1;\mathbb{R}^3).
	\end{equation}
	Let the constitutive parameters satisfy 
	\begin{align}
	\mu\,>0, \qquad\quad \kappa>0, \qquad\quad \mu_{\rm c}> 0,\qquad\quad a_1>0,\quad\quad a_2>0,\qquad\quad a_3>0\,.
	\end{align} 
%	and
%	\begin{align}\label{ch5inn}
%	h_j\,\max \{\sup_{(x_1,x_2)\in {\omega}}|{\kappa_1}|,\sup_{(x_1,x_2)\in {\omega}}|{\kappa_2}|\}<2\qquad \forall \, j\in \mathbb{N},\end{align}
	Then, 
	for any sequence $(\varphi_{h_j}^\natural, \overline{Q}_{e,h_j}^{\natural})\in X$ such that $(\varphi_{h_j}^\natural, \overline{Q}_{e,h_j}^{\natural})\rightarrow(\varphi_0, \overline{Q}_{e,0})$ as $h_j\to 0$,	the sequence of functionals $\mathcal{J}_{h_j}\col X\rightarrow \overline{\mathbb{R}}$ 
	\textit{ $\Gamma$-converges} to the limit energy  functional $\mathcal{J}_0\col X\rightarrow \overline{\mathbb{R}}$ defined by
	\begin{equation}\label{couple}
	\mathcal{J}_0 (m,\overline{Q}_{e,0}) =
	\begin{cases}\dd
	\int_{\omega} [ {W}_{\rm mp}^{{\rm hom}}(\mathcal{E}_{m,\overline{Q}_{e,0}})+\widetilde{W}_{\rm curv}^{\rm hom}(\mathcal{K}_{e,s})]\det (\nabla y_0|n_0)\; d\omega\quad &\text{if}\quad (m,\overline{Q}_{e,0})\in \mathcal{S}_\omega'\.,
	\\
	+\infty\qquad &\text{else in}\; X,
	\end{cases}
	\end{equation}
	where 
	\begin{align}\label{approxi}
	m(x_1,x_2)&:=\varphi_0 (x_1,x_2)=\lim_{h_j\to 0} \varphi_{h_j}^\natural(x_1,x_2,\frac{1}{h_j}x_3), \qquad \overline{Q}_{e,0}(x_1,x_2)=\lim_{h_j\to 0} \overline{Q}_{e,h_j}^\natural(x_1,x_2,\frac{1}{h_j}x_3),\notag\\
	\mathcal{E}_{m,\overline{Q}_{e,0}}&=(\overline{Q}^{T}_{e,0}\nabla m-\nabla y_0|0)[\nabla_x\Theta(0)]^{-1},\\
	\mathcal{K}_{e,s} &=\Big(\mathrm{axl}(\overline{Q}^{T}_{e,0}\,\partial_{x_1} \overline{Q}_{e,0})\,|\, \mathrm{axl}(\overline{Q}^{T}_{e,0}\,\partial_{x_2} \overline{Q}_{e,0})\,|0\Big)[\nabla_x\Theta (0)\,]^{-1}\not\in {\rm Sym}(3)\,,\notag
	\end{align}
	and
	\begin{align}
	\nonumber {W}_{\rm mp}^{\rm hom}
	(\mathcal{E}_{m,\overline{Q}_{e,0}})&=
	\, \mu\,\lVert  \mathrm{sym}\,   \,\mathcal{E}_{m,\overline{Q}_{e,0} }^{\parallel}\rVert^2 +  \mu_{\rm c}\,\lVert \mathrm{skew}\,   \,\mathcal{E}_{m,\overline{Q}_{e,0} }^{\parallel}\rVert^2 +\,\dfrac{\lambda\,\mu\,}{\lambda+2\,\mu\,}\,\big[ \mathrm{tr}    (\mathcal{E}_{m,\overline{Q}_{e,0} }^{\parallel})\big]^2 +\frac{2\,\mu\, \,  \mu_{\rm c}}{\mu_c\,+\mu\,}\norm{\mathcal{E}_{m,\overline{Q}_{e,0} }^Tn_0}^2\\
	&=W_{\mathrm{shell}}\big(   \mathcal{E}_{m,\overline{Q}_{e,0} }^{\parallel} \big)+\frac{2\,\mu\, \,  \mu_{\rm c}}{\mu_c\,+\mu\,}\lVert \mathcal{E}_{m,\overline{Q}_{e,0} }^{\perp}\rVert^2,\\  \widetilde{W}^{{\rm hom}}_{\rm curv}(\mathcal{K}_{e,s})
	\nonumber&=\inf_{A\in \mathfrak{so}(3)}\widetilde{W}_{\rm curv}\Big(\mathrm{axl}(\overline{Q}^{T}_{e,0}\,\partial_{\eta_1} \overline{Q}_{e,0})\,|\, \mathrm{axl}(\overline{Q}^{T}_{e,0}\,\partial_{\eta_2} \overline{Q}_{e,0})\,|\,\axl(A)\,\Big)[(\nabla_x \Theta)^\natural(0)]^{-1}\\
	\nonumber&=\mu L_c^2\Big(b_1\norm{\sym\mathcal{K}_{e,s}^\parallel}^2+b_2\norm{\skew \mathcal{K}_{e,s}^\parallel}^2+\frac{b_1b_3}{(b_1+b_3)}\tr(\mathcal{K}_{e,s}^\parallel)^2+\frac{2\,b_1b_2}{b_1+b_2}\norm{\mathcal{K}_{e,s}^\perp}\Big)\,.
	\end{align}
\end{theorem}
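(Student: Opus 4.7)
The plan is to establish the two standard ingredients of a $\Gamma$-convergence result separately: the $\liminf$ inequality along every admissible converging sequence, and the $\limsup$ inequality via an explicit recovery sequence. Since equi-coercivity has already been proved in Section \ref{equi-comp1}, any sequence along which $\mathcal{J}_{h_j}$ stays bounded gives (up to extraction) a limit $(m,\overline{Q}_{e,0}) \in \mathcal{S}'_\omega$ with $\partial_{\eta_3} m = 0$ and $\partial_{\eta_3}\overline{Q}_{e,0} = 0$; on sequences whose energies blow up the $\liminf$ inequality is trivial, and off $\mathcal{S}'_\omega$ the $\Gamma$-limit is set to $+\infty$.

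For the $\liminf$ part I would split the energy into the membrane contribution and the curvature contribution, and treat them separately using the pointwise minoration inequalities derived in Section \ref{rescaled}. Concretely, $W_{\rm mp}(\UHN) \geq W_{\rm mp}^{{\rm hom},\natural}(\mathcal{E}_{\varphi^\natural,\overline{Q}_e^\natural})$ because the third column of the argument is precisely what is being minimised over, and analogously $\widetilde{W}_{\rm curv}(\Gamma_h^\natural) \geq \widetilde{W}_{\rm curv}^{{\rm hom},\natural}(\mathcal{K}_e^\natural)$ from \eqref{last hom curv}. Since the resulting integrands depend only on $\nabla_{(\eta_1,\eta_2)}\varphi_{h_j}^\natural$ and on $\overline{Q}_{e,h_j}^{\natural,T}\partial_{\eta_\alpha}\overline{Q}_{e,h_j}^\natural$ ($\alpha=1,2$), i.e.\ only on the in-plane derivatives, they pass to the limit by weak lower semicontinuity of convex integrals under the weak $H^1(\Omega_1)$ convergence from the compactness theorem. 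Simultaneously, the weight $\det((\nabla_x\Theta)^\natural)=\det(\nabla y_0|n_0)(1-2\text{H}\,h_j\eta_3+\text{K}\,h_j^2\eta_3^2)$ converges uniformly to $\det(\nabla y_0|n_0)$, and the limit integrand is independent of $\eta_3$, so integration over $\eta_3\in[-\tfrac12,\tfrac12]$ collapses the three-dimensional integral to the two-dimensional one on $\omega$, giving exactly $\mathcal{J}_0(m,\overline{Q}_{e,0})$.

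For the $\limsup$ inequality I would construct an explicit recovery sequence using the two explicit minimisers computed in Section \ref{rescaled}: the optimal third-column vector $\widetilde{d}^*$ from \eqref{formula d} and the optimal skew-symmetric matrix $A^*$ appearing in \eqref{inf curv}. The natural ansatz is
\begin{align*}
\varphi_{h_j}^\natural(\eta_1,\eta_2,\eta_3) &:= m(\eta_1,\eta_2) + h_j\,\eta_3\,\widetilde{d}^*(\eta_1,\eta_2),\\
\overline{Q}_{e,h_j}^\natural(\eta_1,\eta_2,\eta_3) &:= \overline{Q}_{e,0}(\eta_1,\eta_2)\,\exp\!\bigl(h_j\,\eta_3\,A^*(\eta_1,\eta_2)\bigr),
\end{align*}
which automatically lies in $H^1(\Omega_1;\mathbb{R}^3)\times H^1(\Omega_1;\text{SO}(3))$, converges strongly in $X$ to $(m,\overline{Q}_{e,0})$, and satisfies $\nabla_\eta^{h_j}\varphi_{h_j}^\natural=(\nabla m\,|\,\widetilde{d}^*)+h_j\eta_3(\nabla\widetilde{d}^*|0)$ and $\frac{1}{h_j}\overline{Q}_{e,h_j}^{\natural,T}\partial_{\eta_3}\overline{Q}_{e,h_j}^\natural=A^*+O(h_j)$. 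Plugging these into $\mathcal{J}_{h_j}$ and exploiting the continuity of $W_{\rm mp}$ and $\widetilde{W}_{\rm curv}$ in their arguments yields, on passing $h_j\to 0$ and using again $\det((\nabla_x\Theta)^\natural)\to\det(\nabla y_0|n_0)$ together with dominated convergence, exactly the homogenised densities $W_{\rm mp}^{\rm hom}(\mathcal{E}_{m,\overline{Q}_{e,0}})$ and $\widetilde{W}_{\rm curv}^{\rm hom}(\mathcal{K}_{e,s})$ as integrands on $\omega$.

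The main obstacle is making the recovery sequence compatible with the Dirichlet boundary condition on $\Gamma_1$. The ansatz above will in general not match $\varphi_d^\natural$ on $\Gamma_1$, because $\widetilde{d}^*$ need not agree with the corresponding third column of $\varphi_d$. This is a standard difficulty in $\Gamma$-convergence for thin domains and would be handled by a boundary-layer modification: multiply the perturbation $h_j\eta_3\widetilde{d}^*$ (respectively the exponent $h_j\eta_3 A^*$) by a cut-off function $\chi_{h_j}$ that vanishes in a neighbourhood of $\gamma$ of width $\delta_{h_j}\to 0$ chosen slowly enough that the associated boundary-layer energy, scaling like $\delta_{h_j}\cdot h_j^{-1}\cdot h_j^2 = h_j\delta_{h_j}$, remains negligible. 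A secondary technical point, of a less serious nature, is the weak lower semicontinuity of the homogenised membrane energy $W_{\rm mp}^{\rm hom}(\mathcal{E}_{\varphi^\natural,\overline{Q}_e^\natural})$ in the $\liminf$ step: since it involves the splitting $X^\parallel,X^\perp$ through the $(\eta_1,\eta_2)$-dependent projector $\mathrm{A}_{y_0}$, one must either verify convexity in the full matrix variable $\mathcal{E}_{\varphi^\natural,\overline{Q}_e^\natural}$ (which follows from the explicit formula \eqref{hommpnat} since each term is a non-negative quadratic in linear combinations of entries of $\mathcal{E}$), or freeze the projector and invoke Ioffe's theorem for convex integrands with measurable coefficients.
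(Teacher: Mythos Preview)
Your overall strategy---pointwise minoration by the homogenised densities for the $\liminf$ and an explicit ansatz in $\eta_3$ for the $\limsup$---is exactly the paper's approach. The $\liminf$ argument you sketch (split into membrane and curvature parts, use the definition of the infimum pointwise, then weak lower semicontinuity via convexity, then collapse the $\eta_3$-integral) coincides with the paper's Lemma in Subsection~\ref{liminf} almost line by line.

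There is, however, a genuine gap in your recovery-sequence construction. You assert that
\[
\varphi_{h_j}^\natural = m + h_j\eta_3\,\widetilde d^{\,*},\qquad
\overline Q_{e,h_j}^\natural = \overline Q_{e,0}\exp(h_j\eta_3 A^*)
\]
``automatically lies in $H^1(\Omega_1;\mathbb R^3)\times H^1(\Omega_1;\mathrm{SO}(3))$'', and you then write down $\nabla_\eta^{h_j}\varphi_{h_j}^\natural = (\nabla m\,|\,\widetilde d^{\,*}) + h_j\eta_3(\nabla\widetilde d^{\,*}\,|\,0)$. This fails: $\widetilde d^{\,*}$ is built from $\mathcal E_{m,s}$, which in turn contains $\nabla m$, and since $(m,\overline Q_{e,0})\in\mathcal S'_\omega$ gives only $m\in H^1(\omega)$, the optimal vector $\widetilde d^{\,*}$ is a~priori only in $L^2(\omega)$, not in $H^1(\omega)$. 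Likewise $A^*\in L^2(\omega;\mathfrak{so}(3))$ need not be weakly differentiable. So $\nabla\widetilde d^{\,*}$ and $\partial_{\eta_\alpha}A^*$ may not exist, your recovery sequence is not in $\mathcal S'$, and $\mathcal J_{h_j}^\natural$ is $+\infty$ along it. The paper repairs this in Subsection~\ref{limsup} by choosing, for each fixed $\varepsilon>0$, approximants $d_\varepsilon\in H^1(\omega;\mathbb R^3)$ and $A_\varepsilon\in H^1(\omega;\mathfrak{so}(3))$ with $\lVert d_\varepsilon-\widetilde d^{\,*}\rVert_{L^2}<\varepsilon$ and $\lVert A_\varepsilon-A^*\rVert_{L^2}<\varepsilon$, building the recovery sequence with $d_\varepsilon,A_\varepsilon$, passing $h_j\to 0$ first (so that the $h_j\eta_3\nabla d_\varepsilon$ terms vanish), and only afterwards letting $\varepsilon\to 0$ using the continuity of $W_{\rm mp}$ and $\widetilde W_{\rm curv}$. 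This two-parameter diagonal argument is the missing ingredient in your proposal.

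Regarding the Dirichlet boundary condition: you are right to flag it, and your cut-off suggestion is the standard remedy. The paper's proof of the $\limsup$ inequality in fact does \emph{not} address this point explicitly, so on this issue your proposal is more careful than the paper itself.
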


\begin{proof}The first part of the proof is represented by the proof of equi-coercivity and compactness of the family of  energy functionals which are already done. The rest of the proof  will be divided into two  parts which make the subjects of the following two subsections.\end{proof}

\subsection{ {Step 1 of the proof: } The lim-inf condition}\label{liminf}
In this section we prove the following lemma
\begin{lemma} In the hypothesis of Theorem \ref{Theorem homo}, for any sequence $(\varphi_{h_j}^\natural, \overline{Q}_{e,h_j}^{\natural})\in X$ such that $(\varphi_{h_j}^\natural, \overline{Q}_{e,h_j}^{\natural})\rightarrow(\varphi_0 ^\natural, \overline{Q}_{e,0}^{\natural})$ for $h_j\to 0$, i.e.,
\begin{align}
\varphi_{h_j}^\natural\rightarrow \varphi ^\natural_0 \qquad \text{in}\quad {\rm L}^2(\Omega_1,\mathbb{R}^3), \qquad \overline{Q}_{e,h_j}^{\natural}\rightarrow \overline{Q}_{e,0}^{\natural}\qquad \text{in}\quad {\rm L}^{2}(\Omega_1, \textrm{\rm SO}(3)),
\end{align}
we  have
\begin{align}\label{infcon}
\mathcal{J}_0 (\varphi_0^\natural ,\overline{Q}_{e,0}^\natural)\leq \liminf_{h_j\rightarrow 0}\mathcal{J}^\natural_{h_j}(\varphi_{h_j}^{\natural},\overline{Q}_{e,h_j}^{\natural}).
\end{align}
\end{lemma}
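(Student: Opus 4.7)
The plan is three-fold: (i) replace the full energy densities by their homogenized counterparts via the pointwise bounds built into the defining infima, (ii) pass to the weak $\mathrm{L}^2$-limit in the non-fully-reduced shell strain and curvature tensors using a strong-weak product argument, and (iii) invoke weak lower semicontinuity of the resulting non-negative convex quadratic functionals, finally noticing that the limit integrand is $\eta_3$-independent so that the integral over $\Omega_1$ collapses to an integral over $\omega$.

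I would first assume without loss of generality that $\liminf_{h_j\to 0}\mathcal{J}^\natural_{h_j}(\varphi^\natural_{h_j},\overline{Q}^\natural_{e,h_j})<\infty$ and extract a (non-relabeled) subsequence realizing it, so that $(\varphi^\natural_{h_j},\overline{Q}^\natural_{e,h_j})\in\mathcal{S}'$ and the energies are uniformly bounded. The equi-coercivity theorem of Section \ref{equi-comp1} then provides, along a further subsequence, $\varphi^\natural_{h_j}\rightharpoonup\varphi^\natural_0$ and $\overline{Q}^\natural_{e,h_j}\rightharpoonup\overline{Q}^\natural_{e,0}$ in ${\rm H}^1(\Omega_1)$ with $\eta_3$-independent limits $(m,\overline{Q}_{e,0})\in\mathcal{S}'_\omega$; Rellich-Kondrachov upgrades $\overline{Q}^\natural_{e,h_j}\to\overline{Q}^\natural_{e,0}$ to strong convergence in $\mathrm{L}^2(\Omega_1)$. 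By the defining infima \eqref{hom inf} and \eqref{inf curv}, one has pointwise a.e.\ in $\Omega_1$
\[
W_{\rm mp}(\UJN)\,\geq\, W_{\rm mp}^{{\rm hom},\natural}\bigl(\mathcal{E}_{\varphi^\natural_{h_j},\overline{Q}^\natural_{e,h_j}}\bigr),\qquad \widetilde{W}_{\rm curv}(\Gamma^\natural_{h_j})\,\geq\, \widetilde{W}_{\rm curv}^{{\rm hom},\natural}\bigl(\mathcal{K}^\natural_{e,h_j}\bigr),
\]
simply because the full expressions correspond to specific admissible choices (namely $c=\tfrac1{h_j}\partial_{\eta_3}\varphi^\natural_{h_j}$ and $A=\tfrac1{h_j}\overline{Q}_{e,h_j}^{\natural,T}\partial_{\eta_3}\overline{Q}_{e,h_j}^\natural$) in the respective infima. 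Since $h_j\eta_3\to 0$ uniformly and $y_0\in C^2(\overline{\omega})$, the matrices $[(\nabla_x\Theta)^\natural(\eta)]^{-1}$ and the weight $\det((\nabla_x\Theta)^\natural)$ converge uniformly on $\overline{\Omega_1}$ to $[\nabla_x\Theta(0)]^{-1}$ and $\det(\nabla y_0|n_0)>0$. Combining the strong $\mathrm{L}^2$-convergence of $\overline{Q}^\natural_{e,h_j}$ with the weak $\mathrm{L}^2$-convergence of $\nabla_{(\eta_1,\eta_2)}\varphi^\natural_{h_j}$ and of $\partial_{\eta_\alpha}\overline{Q}^\natural_{e,h_j}$ via the strong$\cdot$weak $\rightharpoonup$ weak product rule then yields
\[
\mathcal{E}_{\varphi^\natural_{h_j},\overline{Q}^\natural_{e,h_j}}\rightharpoonup\mathcal{E}_{m,\overline{Q}_{e,0}},\qquad \mathcal{K}^\natural_{e,h_j}\rightharpoonup\mathcal{K}_{e,s} \qquad \text{weakly in } \mathrm{L}^2(\Omega_1).
\]

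The explicit formulas derived in Sections \ref{homogen mem}-\ref{homogen curv} show that $W_{\rm mp}^{{\rm hom},\natural}$ and $\widetilde{W}_{\rm curv}^{{\rm hom},\natural}$ are non-negative, convex, quadratic functionals of their matrix arguments whose coefficients depend \emph{only} on the midsurface geometry (through the tangential projector $A_{y_0}$ and the unit normal $n_0$), and therefore coincide as functionals with $W_{\rm mp}^{\rm hom}$ and $\widetilde{W}_{\rm curv}^{\rm hom}$. Standard weak $\mathrm{L}^2$-lower semicontinuity of such convex quadratic integral functionals, combined with the uniform convergence of the determinantal weight, then delivers
\[
\liminf_{h_j\to 0}\mathcal{J}^\natural_{h_j}(\varphi^\natural_{h_j},\overline{Q}^\natural_{e,h_j})\,\geq\, \int_{\Omega_1}\bigl[W_{\rm mp}^{\rm hom}(\mathcal{E}_{m,\overline{Q}_{e,0}})+\widetilde{W}_{\rm curv}^{\rm hom}(\mathcal{K}_{e,s})\bigr]\det(\nabla y_0|n_0)\,dV_\eta.
\]
Since the right-hand integrand is $\eta_3$-independent and $\Omega_1=\omega\times[-\tfrac12,\tfrac12]$ has unit thickness in the $\eta_3$-direction, this integral collapses to $\mathcal{J}_0(m,\overline{Q}_{e,0})$, which is exactly \eqref{infcon}.

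The principal obstacle is step (ii): the map $(\varphi^\natural,\overline{Q}_e^\natural)\mapsto\mathcal{E}_{\varphi^\natural,\overline{Q}_e^\natural}$ is \emph{nonlinear} because of the factor $\overline{Q}_e^{\natural,T}$ multiplying $\nabla\varphi^\natural$, and the same nonlinearity appears in the curvature tensor $\mathcal{K}_e^\natural$. Consequently, the weak convergence alone is not enough; the strong $\mathrm{L}^2$-compactness of the microrotations $\overline{Q}_{e,h_j}^\natural$ obtained via the compact embedding ${\rm H}^1\hookrightarrow\mathrm{L}^2$ is essential to transfer the weak limits through these products. Without this strong compactness the strong$\cdot$weak argument would fail and convexity alone could not close the gap between $W_{\rm mp}$ and $W_{\rm mp}^{\rm hom}$ in the limit.
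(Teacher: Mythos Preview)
Your proposal is correct and follows essentially the same route as the paper's proof: reduce to finite-energy sequences, use the equi-coercivity theorem to get weak $H^1$-convergence and identify the $\eta_3$-independent limits, apply the pointwise homogenization inequalities $W_{\rm mp}\geq W_{\rm mp}^{{\rm hom},\natural}$ and $\widetilde W_{\rm curv}\geq\widetilde W_{\rm curv}^{{\rm hom},\natural}$, pass to the weak limit in $\mathcal{E}$ and $\mathcal{K}$ via the strong($\overline Q_e$)$\times$weak($\nabla\varphi$, $\partial\overline Q_e$) product together with the uniform convergence of $[(\nabla_x\Theta)^\natural]^{-1}$ and $\det(\nabla_x\Theta)^\natural$, and finally use convexity/weak lower semicontinuity before integrating out $\eta_3$. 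Your observation that the quadratic forms $W_{\rm mp}^{{\rm hom},\natural}$ and $W_{\rm mp}^{\rm hom}$ coincide as functions of their matrix argument (the only $\eta_3$-dependence being inside the argument itself) is exactly what makes the last step work, and your emphasis on why the \emph{strong} compactness of the microrotations is indispensable for the nonlinear product is a point the paper leaves implicit.
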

\begin{proof}
It is clear that we may restrict our proof to sequences $(\varphi_{h_j}^\natural, \overline{Q}_{e,h_j}^{\natural})\in \mathcal{S}'\subset X'$, i.e., to sequences in which the functionals $\mathcal{J}^\natural_{h_j}(\varphi_{h_j}^{\natural}, \overline{Q}_{e,h_j}^{\natural})$ are finite, since otherwise the statement is satisfied. In addition, any $(\varphi_{h_j}^\natural, \overline{Q}_{e,h_j}^{\natural})$
such that $\mathcal{J}^\natural_{h_j}(\varphi_{h_j}^{\natural}, \overline{Q}_{e,h_j}^{\natural})<\infty$ is uniformly bounded in $X'$. Therefore, there exists a subsequence (not relabeled) which is weakly convergent in $X'$. Due to the strong convergence of the original sequence, the considered subsequence is weakly convergent to $(\varphi ^\natural, \overline{Q}_{e,0}^{\natural})$, i.e.,
\begin{align}
\varphi_{h_j}^\natural\rightharpoonup \varphi_0^\natural \qquad \text{in}\quad {\rm L}^2(\Omega_1,\mathbb{R}^3), \qquad \overline{Q}_{e,h_j}^{\natural}\rightharpoonup \overline{Q}_{e,0}^{\natural}\qquad \text{in}\quad {\rm L}^{2}(\Omega_1, \text{SO(3)}).
\end{align}

Therefore, we have the weak convergence $(\varphi_{h_j}^\natural,\overline{Q}_{e,h_j}^\natural )$ (without relabeling it) to $(\varphi_{0}^\natural,\overline{Q}_{e,0}^\natural)$ in ${\rm H}^{1}(\omega,\R^3)\times { {{\rm H}^{1}}}(\omega,\SO(3))$. For $\overline{U}_h^\natural=\overline{Q}_e^{\natural,T}\nabla_\eta^h\varphi^\natural[(\nabla_x\Theta)^\natural]^{-1}$ we have
\begin{align}
{{W}}_{\rm mp}(\UHN)=\mu\,\norm{\sym (\UHN-\id_3)}^2+\mu_c\,\norm{\skew (\UHN-\id_3)}^2 + \frac{\lambda}{2}[\tr(\sym(\UHN-\id_3))]^2\,,
\end{align}
while for $\mathcal{E}_{\varphi^\natural,\overline{Q}_{e}^{\natural}}=\mathcal{E}_{\varphi^\natural,\overline{Q}_{e}^{\natural} }^{\parallel}+\mathcal{E}_{\varphi^\natural,\overline{Q}_{e}^{\natural} }^{\perp}$ with $\mathcal{E}_{\varphi^\natural,\overline{Q}_{e}^{\natural}}=(\overline{Q}_{e}^{\natural,T}\nabla_{(\eta_1,\eta_2)} \varphi^\natural-[\nabla y_0]^\natural |0)[(\nabla_x\Theta)^\natural]^{-1}$ we have
$$ {W}_{\rm mp}^{{\rm hom},\natural}(  \mathcal{E}_{\varphi^\natural,\overline{Q}_{e}^{\natural} })=\, \mu\,\lVert  \mathrm{sym}\,   \,\mathcal{E}_{\varphi^\natural,\overline{Q}_{e}^{\natural} }^{\parallel}\rVert^2 +  \mu_{\rm c}\,\lVert \mathrm{skew}\,   \,\mathcal{E}_{\varphi^\natural,\overline{Q}_{e}^{\natural} }^{\parallel}\rVert^2 +\,\dfrac{\lambda\,\mu\,}{\lambda+2\,\mu\,}\,\big[ \mathrm{tr}    (\mathcal{E}_{\varphi^\natural,\overline{Q}_{e}^{\natural} }^{\parallel})\big]^2 +\frac{2\,\mu\, \,  \mu_{\rm c}}{\mu_c\,+\mu\,}\norm{\mathcal{E}_{\varphi^\natural,\overline{Q}_{e}^{\natural} }^\perp}^2\,.$$
% Therefore, it will have a weak convergence subsequence (without labeling) in $X^\prime_\omega$ (in the other word, in ${\rm H}^1(\Omega_1,\mathbb{R}^3)\times {\rm W}^{1,p}(\Omega_1,\text{SO(3)})$), which gives the guarantee of existence of $\Gamma$-convergence. %The imbedding rule between the sets $\mathbb{P}_w'\subset \mathbb{P}'$, will solve the problem of converging in the in-plane area.\\

  Hence, for the sequence $(\varphi_{h_j}^\natural, \overline{Q}_{e,h_j}^\natural )\in {\rm H}^{1}(\Omega_1,\mathbb{R}^3)\times { {{\rm H}^{1}}}(\Omega_1,\text{\rm SO}(3))$ where $(\varphi_{h_j}^\natural, \overline{Q}_{e,h_j}^{\natural})\rightarrow(\varphi ^\natural_0, \overline{Q}_{e,0}^{\natural})$ with $J^\natural_{h_j}(\varphi_{h_j}^\natural, \overline{Q}_{e,h_j}^\natural )<\infty$, we have
 %In view of (\ref{boun co}) and the explanation at the beginning of this section, one can see
 \begin{align}\label{ineginf}
  {W}_{\rm mp}(\overline{Q}_{e,h_j}^{\natural,T}\nabla^{h_j}_\eta\varphi^\natural_{h_j}[(\nabla_x\Theta)^\natural ]^{-1})&= {W}_{\rm mp}\Big(\overline{Q}_{e,h_j}^{\natural,T}(\nabla_{(\eta_1,\eta_2)}\varphi_{h_j}^\natural|\frac{1}{h_j}\partial_{\eta_3}\varphi_{h_j}^\natural)[(\nabla_x\Theta)^\natural ]^{-1}\Big)\\&\geq  {W}_{\rm mp}^{{\rm hom},\natural}\Big(  \mathcal{E}_{\varphi_{h_j}^\natural,\overline{Q}_{e,h_j}^{\natural}}\Big),
 \end{align}
 where we recall that
  $
  \mathcal{E}_{\varphi_{h_j}^\natural,\overline{Q}_{e,h_j}^{\natural}}:=(\overline{Q}_{e,h_j}^{\natural,T}\nabla_{(\eta_1,\eta_2)} \varphi_{h_j}^\natural-(\nabla y_0)^\natural |0)[(\nabla_x\Theta)^\natural ]^{-1}\,.
 $
%\textcolor{blue}{ This inequality is not true since in the expression of $ {W}_{\rm mp}(\overline{R}_{h_j}^{\natural,T}\nabla^{h_j}_\eta\varphi^\natural_{h_j}[\nabla_x\Theta]^{-1})$ from the functional $I_h$ the term $[\nabla_x\Theta]^{-1}$ depends on $x_3$ while in the term from the left hand side, i.e., $ {W}_{\rm mp}^{{\rm hom},\natural}\Big((\nabla_{(\eta_1,\eta_2)}\varphi^\natural_{h_j}[\nabla_x\Theta]^{-1}),\overline{R}_{h_j}^{\natural}\Big)$ the term $[\nabla_x\Theta]^{-1})$ is evaluated for $x_3=0$. I can not see a reason why this inequality is true. Yes, it is true if we evaluate both, left and right hand sides, in $x_3=0$, but doing so $I_h$ are not coming any more from a 3D problem and it is an a priori ad hoc choice. I believe that this is not the point for a Gamma-convergence result. I have writing this just to keep the idea in mind.}

 Then by taking the integral over $\Omega_1$ on both sides and taking the $\liminf$ for $h_j$, we obtain
 \begin{align}
 \nonumber\liminf_{h_j\rightarrow 0}\int _{\Omega_1}  {W}_{\rm mp}(\overline{Q}_{e,h_j}^{\natural,T}\nabla^{h_j}_\eta\varphi^\natural_{h_j}[(\nabla_x\Theta)^\natural ]^{-1})\, \det [\nabla_x \Theta]^\natural(\eta)\;dV_ \eta&\geq \liminf_{h_j\rightarrow 0}\int_{\Omega_1} {W}_{\rm mp}^{{\rm hom},\natural}\Big( \mathcal{E}_{\varphi^\natural_{h_j},\overline{Q}_{e,h_j}^{\natural}}\Big)\, \det [\nabla_x \Theta]^\natural(\eta)\;dV_\eta\,.
 \end{align}
 In the expression of $\mathcal{E}_{\varphi^\natural_{h_j},\overline{Q}_{e,h_j}^{\natural}}$, the quantity $[\nabla _x\Theta]^{-1}$ is evaluated in $(x_1,x_2,x_3)=(\eta_1,\eta_2,h\, \eta_3)$. Therefore, we have to  study its behaviour for $h_j\to 0$. 
 	In addition,  we recall the convergence results \cite[Lemma 1]{le1996membrane}:
  {	\begin{align}\label{convdet}
 	\lim_{h_j\to 0}  \det [\nabla_x \Theta]^\natural(\eta_1,\eta_2,\eta_3)&=\lim_{h_j\to 0}  \det [\nabla_x \Theta]^\natural(x_1,x_2,\frac{1}{h_j}x_3)=\det[\nabla_x \Theta]^\natural(\eta_1,\eta_2,0)\notag\\&=\det (\nabla y_0|n_0) \, \quad  \qquad \qquad \qquad \qquad \qquad \quad \, \text{in} \quad C^0(\overline{\Omega}),\notag\\
 	\lim_{h_j\to 0}   [(\nabla_x \Theta)^{-1}]^\natural(\eta_1,\eta_2,\eta_3)&=\lim_{h_j\to 0}   [(\nabla_x \Theta)^{-1}]^\natural(x_1,x_2,\frac{1}{h_j}x_3)\\&=[(\nabla_x \Theta)^{-1}]^\natural(\eta_1,\eta_2,0)= (\nabla_x \Theta)^{-1}(0) \qquad \text{in} \quad C^0(\overline{\Omega}).\notag
 	\end{align}}
 
 Due to \eqref{convdet}, the weak convergence of the sequence $\varphi_{h_j}^{\natural}$ and  the strong convergence of the sequence $\overline{Q}_{e,h_j}^\natural$, we have the weak convergence
  {\begin{align}\label{wce}
 \mathcal{E}_{\varphi_{h_j}^\natural,\overline{Q}_{e,h_j}^{\natural}}&=(\overline{Q}_{e,h_j}^{\natural,T}\nabla_{(\eta_1,\eta_2)} \varphi_{h_j}^\natural-(\nabla y_0)^\natural |0)[(\nabla_x\Theta)^\natural ]^{-1}\,\notag\\&\qquad \rightharpoonup
 (\overline{Q}_{e,0}^{\natural,T}\nabla_{(\eta_1,\eta_2)} \varphi_{0}^\natural-(\nabla y_0)^\natural |0)[(\nabla_x\Theta)^\natural ]^{-1}(0)=:\mathcal{E}_{\varphi^\natural_{0},\overline{Q}_{e,0}^{\natural}}.
 \end{align}}

 Using again \eqref{convdet},  the convexity of the energy function $ {W}_{\rm mp}^{{\rm hom},\natural}$ with respect to $ \mathcal{E}_{\varphi^\natural_{h_j},\overline{Q}_{e,h_j}^{\natural}}$, the Fatou's Lemma, the characterization of $\liminf$ and the weak convergence \eqref{wce} we get 
 \begin{align}\label{Whommp0}
  \liminf_{h_j\rightarrow 0}\int_{\Omega_1} {W}_{\rm mp}^{{\rm hom},\natural}\Big( \mathcal{E}_{\varphi^\natural_{h_j},\overline{Q}_{e,h_j}^{\natural}}\Big)\,\det [\nabla_x \Theta]^\natural(\eta)\;dV_\eta\geq \int_{\Omega_1} {W}_{\rm mp}^{{\rm hom},\natural}\Big( \mathcal{E}_{\varphi^\natural_{0},\overline{Q}_{e,0}^{\natural}}\Big)\det (\nabla y_0|n_0)\;dV_\eta\,.
 \end{align}
Since both $\varphi^\natural_{0}$ and $\overline{Q}_{e,0}^{\natural}$ are independent of the transverse variable $\eta_3$, we also obtain
\begin{align}\label{Whommp1}
\liminf_{h_j\rightarrow 0}\int _{\Omega_1}  {W}_{\rm mp}(\overline{Q}_{e,h_j}^{\natural,T}\nabla^{h_j}_\eta\varphi^\natural_{h_j}[(\nabla_x\Theta)^\natural ]^{-1})\det [\nabla_x \Theta]^\natural(\eta)\;dV_ \eta&\geq\int_{-\frac{1}{2}}^{\frac{1}{2}}\int_{\omega} {W}_{\rm mp}^{{\rm hom},\natural}\Big( \mathcal{E}_{\varphi^\natural_{0},\overline{Q}_{e,0}^{\natural}}\Big)\,\det (\nabla y_0|n_0)\;dV_\eta
\notag\\&=\int_{\omega} {W}_{\rm mp}^{{\rm hom}}\Big( \mathcal{E}_{m,\overline{Q}_{e,0}}\Big)\,\det (\nabla y_0|n_0)\;d\omega\,.\end{align}

 We do the same process for the curvature energy, by using (\ref{last hom curv}), the convexity of $\widetilde{W}_{\rm curv}^{{\rm hom}}$ in its argument and the weak convergence
 \begin{align}
\nonumber\Big(\mathrm{axl}(\overline{Q}_{e,h_j}^{\natural,T}\,\partial_{\eta_1} \overline{Q}_{e,h_j}^\natural)\,|\, \mathrm{axl}(\overline{Q}_{e,h_j}^{\natural,T}\,&\partial_{\eta_2} \overline{Q}_{e,h_j}^\natural)\,|0\Big)[(\nabla_x\Theta)^\natural (\eta)\,]^{-1} \\
&\rightharpoonup \Big(\mathrm{axl}(\overline{Q}_{e,0}^{\natural,T}\,\partial_{\eta_1} \overline{Q}_{e,0}^\natural)\,|\, \mathrm{axl}(\overline{Q}_{e,0}^{\natural,T}\,\partial_{\eta_2} \overline{Q}_{e,0}^\natural)\,|0\Big)[\nabla_x\Theta (0)\,]^{-1}\,.
 \end{align}
Using also   \eqref{convdet},  we arrive at
 \begin{align}\label{after linfcurv}
 \nonumber \liminf_{h_j}\int_{\Omega_1}&\widetilde{W}_{\rm curv}(\Gamma_h^\natural)\,\det [\nabla_x \Theta]^\natural(\eta)\; dV_\eta
 \geq \liminf_{h_j}\int_{\Omega_1}\widetilde{W}_{\rm curv}^{{\rm hom},\natural}(\mathcal{K}_{e}^\natural)\,\det [\nabla_x \Theta]^\natural(\eta)\; dV_\eta\\
 \nonumber&\geq \liminf_{h_j}\int_{\Omega_1}\widetilde{W}_{\rm curv}^{{\rm hom}}(\mathcal{K}_{e,s})\,\det [\nabla_x \Theta]^\natural(\eta)\; dV_\eta\geq \int_{\Omega_1}\widetilde{W}_{\rm curv}^{{\rm hom}}(\mathcal{K}_{e,s})\,\det (\nabla y_0|n_0)\; dV_\eta\\
 &=\int_{-\frac{1}{2}}^{\frac{1}{2}}\int_{\omega}\widetilde{W}_{\rm curv}^{{\rm hom}}(\mathcal{K}_{e,s})\,\det (\nabla y_0|n_0)\; dV_\eta=\int_{\omega}\widetilde{W}_{\rm curv}^{{\rm hom}}(\mathcal{K}_{e,s})\,\det (\nabla y_0|n_0)\; d\omega\,.
 \end{align}
 Since, $ {W}_{\rm mp}(\overline{Q}_{e,h_j}^{\natural,T}\nabla^{h_j}_\eta \varphi^\natural_{h_j}[(\nabla_x\Theta)^\natural ]^{-1})>0$ and $\widetilde{W}_{\rm curv}(\Gamma_h^\natural)>0$, by combining (\ref{Whommp1}) and (\ref{after linfcurv}) we deduce
 \begin{align}\label{compa}
 \liminf_{h_j}\int_{\Omega_1}[ {W}_{\rm mp}&(\overline{Q}_{e,h_j}^{\natural,T}\nabla^{h_j}_\eta \varphi^\natural_{h_j}[(\nabla_x\Theta)^\natural ]^{-1})+\widetilde{W}_{\rm curv}(\Gamma_h^\natural)]\det [\nabla_x \Theta]^\natural(\eta)\; dV_\eta\\
&\geq\int_{\omega}\Big( {W}_{\rm mp}^{{\rm hom}}( \mathcal{E}_{m,\overline{Q}_{e,0}})+\widetilde{W}_{\rm curv}^{\rm hom}(\mathcal{K}_{e,s})\Big)\,\det (\nabla y_0|n_0)\; d\omega\notag=\mathcal{J}_0(m,\overline{Q}_{e,0})\,,
 \end{align}
 where we have used that $\overline{Q}_{e,0}^\natural\equiv\overline{Q}_{e,0}$ and $m=\varphi_0 $. Hence, the $\lim$-$\inf$ inequality \eqref{infcon} is proven.
 \end{proof}
 \subsection{ {Step 2 of the proof: } The lim-sup condition - recovery sequence}\label{limsup}
 Now we show the following lemma
 \begin{lemma}\label{thelimsup} In the hypothesis of Theorem \ref{Theorem homo}, for all $(\varphi_{0}^{\natural},\overline{Q}_{e,0}^\natural)\in {\rm L}^2(\Omega_1)\times {\rm L}^{2}(\Omega_1,\text{\rm SO}(3))$  there exists   $(\varphi_{h_j}^{\natural},\overline{Q}_{e,h_j}^\natural)\in {\rm L}^2(\Omega_1)\times {\rm L}^{2}(\Omega_1,\text{\rm SO}(3))$ with $(\varphi_{h_j}^{\natural},\overline{Q}_{e,h_j}^\natural)\rightarrow (\varphi_{0}^\natural,\overline{Q}_{e,0}^\natural)  $ such that
 \begin{align}
 \mathcal{J}_0  (\varphi_0^\natural ,\overline{Q}_{e,0}^\natural)\geq \limsup_{h_j\rightarrow 0}\mathcal{J}^\natural_{h_j}(\varphi_{h_j}^{\natural}, \overline{Q}_{e,h_j}^\natural).
 \end{align}
 \end{lemma}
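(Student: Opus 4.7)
The plan is to construct an explicit recovery sequence by prescribing the ansatz in the transverse direction $\eta_3$ using the minimizers $\widetilde{d}^*$ and $A^*$ of the two auxiliary optimization problems from Section 5 (i.e.\ the ones realizing $W_{\rm mp}^{\rm hom}$ and $\widetilde{W}_{\rm curv}^{\rm hom}$ respectively). Since the limit functional $\mathcal{J}_0$ is finite only for $(\varphi_0^\natural,\overline{Q}_{e,0}^\natural) \in \mathcal{S}_\omega'$, we may restrict to this case; otherwise the constant sequence $(\varphi_0^\natural,\overline{Q}_{e,0}^\natural)$ trivially works. By a density argument (approximating $m$ and $\overline{Q}_{e,0}$ by smooth maps, compatible with the Dirichlet data $\varphi_d$) we further reduce to the case where $m \in C^2(\overline{\omega},\mathbb{R}^3)$ and $\overline{Q}_{e,0} \in C^1(\overline{\omega},\mathrm{SO}(3))$; the density of smooth maps into $\mathrm{SO}(3)$ in $\mathrm{H}^1(\omega,\mathrm{SO}(3))$ handles the rotation field, and continuity of the limit energy in its arguments allows passing back to general admissible $(m,\overline{Q}_{e,0})$ at the very end.

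For the smooth case, define the recovery sequence on $\Omega_1$ by
\begin{align*}
\varphi_{h_j}^{\natural}(\eta_1,\eta_2,\eta_3) &:= m(\eta_1,\eta_2) + h_j\,\eta_3\, \widetilde{d}^*(\eta_1,\eta_2),\\
\overline{Q}_{e,h_j}^{\natural}(\eta_1,\eta_2,\eta_3) &:= \overline{Q}_{e,0}(\eta_1,\eta_2)\,\exp\!\bigl(h_j\,\eta_3\, A^*(\eta_1,\eta_2)\bigr),
\end{align*}
where $\widetilde{d}^*$ is the explicit minimizer from \eqref{formula d} and $A^* \in \mathfrak{so}(3)$ is the minimizer from the curvature optimization \eqref{inf curv} (with $\overline{Q}_e^\natural$ replaced by $\overline{Q}_{e,0}$). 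The exponential map ensures $\overline{Q}_{e,h_j}^{\natural}$ takes values in $\mathrm{SO}(3)$. One then computes directly, using Taylor expansion in $h_j$,
\begin{align*}
\nabla_\eta^{h_j}\varphi_{h_j}^{\natural} &= \bigl(\nabla m \mid \widetilde{d}^*\bigr) + O(h_j),\qquad\quad
\overline{Q}_{e,h_j}^{\natural,T}\,\partial_{\eta_3}\overline{Q}_{e,h_j}^{\natural} = h_j\,A^* + O(h_j^2),
\end{align*}
and the corresponding scaled axl-vector in $\Gamma_{h_j}^{\natural}$ converges uniformly to $\mathrm{axl}(A^*)$. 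Strong $\mathrm{L}^2$-convergence $(\varphi_{h_j}^{\natural},\overline{Q}_{e,h_j}^{\natural})\to(\varphi_0^\natural,\overline{Q}_{e,0}^\natural)$ is immediate since the perturbations are $O(h_j)$ in $\mathrm{L}^\infty$.

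To pass to the limit in the energy, I would invoke uniform convergence of the integrands $W_{\rm mp}\bigl(\overline{Q}_{e,h_j}^{\natural,T}\nabla_\eta^{h_j}\varphi_{h_j}^{\natural}[(\nabla_x\Theta)^\natural]^{-1}\bigr)$ and $\widetilde{W}_{\rm curv}(\Gamma_{h_j}^{\natural})$ to $W_{\rm mp}\bigl(\overline{Q}_{e,0}^T(\nabla m \mid \widetilde{d}^*)[\nabla_x\Theta(0)]^{-1}\bigr)$ and $\widetilde{W}_{\rm curv}\bigl((\mathrm{axl}(\overline{Q}_{e,0}^T\partial_{\eta_1}\overline{Q}_{e,0})\mid\mathrm{axl}(\overline{Q}_{e,0}^T\partial_{\eta_2}\overline{Q}_{e,0})\mid\mathrm{axl}(A^*))[\nabla_x\Theta(0)]^{-1}\bigr)$, together with the pointwise convergence $\det(\nabla_x\Theta)^\natural \to \det(\nabla y_0\mid n_0)$ from \eqref{convdet}. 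By the very definitions of the minimizers $\widetilde{d}^*$ and $A^*$, the pointwise limits coincide with $W_{\rm mp}^{\rm hom}(\mathcal{E}_{m,\overline{Q}_{e,0}})$ and $\widetilde{W}_{\rm curv}^{\rm hom}(\mathcal{K}_{e,s})$ respectively, and integrating over $\Omega_1$ (using independence of $\eta_3$ to reduce to an integral over $\omega$) yields exactly $\mathcal{J}_0(m,\overline{Q}_{e,0})$.

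The main obstacle is ensuring that the recovery sequence satisfies the Dirichlet boundary condition $\varphi_{h_j}^{\natural}|_{\Gamma_1} = \varphi_d^\natural$, since the ansatz above only satisfies it up to the correction $h_j\,\eta_3\,\widetilde{d}^*|_\gamma$. I would handle this by a standard boundary-layer modification: introduce a cut-off $\chi_{h_j}\in C_c^\infty(\omega)$ with $\chi_{h_j}\equiv 1$ outside a boundary strip of width $\sqrt{h_j}$ around $\partial\omega$, $\chi_{h_j}\equiv 0$ on $\partial\omega$, and $|\nabla\chi_{h_j}|\leq C/\sqrt{h_j}$; then replace $\widetilde{d}^*$ by $\chi_{h_j}\widetilde{d}^* + (1-\chi_{h_j})\,\frac{1}{h_j\eta_3}(\varphi_d^\natural - m)$ (and similarly interpolate the rotation to the boundary value of $\overline{Q}_{e,0}$ in the strip, using the exponential map to stay in $\mathrm{SO}(3)$). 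The measure of the boundary strip is $O(\sqrt{h_j})$, and the energy density there is uniformly controlled by the smoothness of $m$, $\overline{Q}_{e,0}$, $\varphi_d$, so the boundary contribution vanishes as $h_j\to 0$ and does not affect the $\limsup$. A standard diagonal argument then removes the smoothness assumption on $(m,\overline{Q}_{e,0})$ and yields the claimed inequality for arbitrary admissible targets.
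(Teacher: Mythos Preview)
Your recovery ansatz—$\varphi_{h_j}^\natural=m+h_j\eta_3\,\widetilde{d}^*$ and $\overline{Q}_{e,h_j}^\natural=\overline{Q}_{e,0}\exp(h_j\eta_3 A^*)$—is exactly the one the paper uses. The difference is only in where the regularization is placed. The paper keeps $(m,\overline{Q}_{e,0})\in H^1$ as given and instead approximates the optimizers $d^*\in L^2(\omega,\mathbb{R}^3)$ and $A^*\in L^2(\omega,\mathfrak{so}(3))$ by $H^1$ functions $d_\varepsilon$, $A_\varepsilon$ (this is needed so that the recovery sequence actually lies in $H^1(\Omega_1)$); the energy error is then controlled by the exact second-order Taylor expansion of the quadratic form $W_{\rm mp}$ about $\widetilde{U}_0=\overline{Q}_{e,0}^T(\nabla m\,|\,d^*)[\nabla_x\Theta(0)]^{-1}$, and one sends $h_j\to 0$ followed by $\varepsilon\to 0$. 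Your route—smoothing $(m,\overline{Q}_{e,0})$ first—makes $\widetilde{d}^*,A^*$ automatically regular and allows a direct dominated-convergence argument, recovering general $H^1$ targets by a diagonal step at the end. Both strategies are standard and valid; the paper's avoids the (not entirely trivial) density of smooth $\mathrm{SO}(3)$-valued maps in $H^1(\omega,\mathrm{SO}(3))$.

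One genuine slip: your boundary-layer interpolant $\chi_{h_j}\widetilde{d}^*+(1-\chi_{h_j})\frac{1}{h_j\eta_3}(\varphi_d^\natural-m)$ is singular at $\eta_3=0$ and does not define an $H^1$ function on $\Omega_1$. The standard fix is to put the cutoff on the perturbation only, i.e.\ take $\varphi_{h_j}^\natural=m+h_j\eta_3\,\chi_{h_j}\widetilde{d}^*+(1-\chi_{h_j})(\varphi_d^\natural-m)$, so that the last term is $O(1)$ on a strip of measure $O(\sqrt{h_j})$ and its gradient is $O(h_j^{-1/2})$ there; the extra energy then vanishes as $h_j\to 0$. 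The paper, for its part, does not address this boundary compatibility at all.
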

\begin{proof}
 Similar to the case of the $\lim$-$\inf$ inequality, we can restrict our attention to sequences $(\varphi_{h_j}^\natural,\overline{Q}_{e,h_j}^\natural)\in X$ such that $\mathcal{J}^{\natural}_{h_j}(\varphi_{h_j}^\natural,\overline{Q}_{e,h_j}^\natural)<\infty$. Therefore, the sequence $(\varphi
 _{h_j}^\natural,\overline{Q}_{e,h_j}^\natural)\in X$ has a weakly convergent subsequence in $X'$, and we can focus on the space ${\rm H}^1(\Omega_1,\mathbb{R}^3)\times { {{\rm H}^{1}}}(\Omega_1,\text{SO(3)})$.
 
 One of the requirements for $\Gamma$-convergence, is the existence of a recovery sequence. Thus, the idea is to define an expansion for the deformation and the microrotation through the thickness. In reality, the minimizers of the energy model can be a good candidate for constructing the recovery sequence. To do so, we look at the first order Taylor expansion of the nonlinear deformation $\varphi^\natural_{h_j}$ in thickness direction $\eta_3$ 
 \begin{align}
 \varphi_{h_j}^\natural(\eta_1,\eta_2,\eta_3)=\;\varphi_{h_j}^\natural(\eta_1,\eta_2,0)+\eta_3\,\partial_{\eta_3}\varphi_{h_j}^\natural(\eta_1,\eta_2,0)\,.
 \end{align}
 
% We may call $\varphi_{h_j}^\natural(\eta_1,\eta_2,0):=m(\eta_1,\eta_2)=\varphi_{0}(\eta_1,\eta_2)$. In the following  
With the formula
 \begin{align}
  d^*&=\Big(1-\frac{\lambda}{2\,\mu\,+\lambda}\iprod{  \mathcal{E}_{m,s} ,\id_3}\Big) \overline{Q}_{e,0}^\natural n_0+\frac{\mu_c\,-\mu\,}{\mu_c\,+\mu\,}\;\overline{Q}_{e,0}^\natural  \mathcal{E}_{m,s}^T n_0\,,
 \end{align}
  and replacing $\frac{1}{h_j}\partial_{\eta_3}\varphi_{h_j}^\natural(\eta_1,\eta_2,0)$ with $d^*(\eta_1,\eta_2)$, which means replacing $\partial_{\eta_3}\varphi_{h_j}^\natural(\eta_1,\eta_2,0)$  by $h_jd^* (\eta_1,\eta_2)$, we make an ansatz for our recovery sequence as following
 \begin{align}\label{taylor phi}
 \varphi_{h_j}^\natural(\eta_1,\eta_2,\eta_3):=\;\varphi_{0}^\natural(\eta_1,\eta_2)+h_j\.\eta_3\.d^* (\eta_1,\eta_2).
 \end{align}
% \textcolor{blue}{It could be possible to be forced to take an ansatz with $\eta_3^2$, but I am not sure since it is possible to remain only in the framework of a membrane theory, as it was the case for the plate theory. So, it could be possible but I am not sure, because in the plate theory it was not necessary. We will see. Actually we will see this after we have the new correct form of the family $I_h$.}
% 
 Since $\nabla_{(\eta_1,\eta_2)} \varphi^\natural\in {\rm L}^2(\omega,\R^3)$ and $\overline{Q}_{e,0}\in \SO(3)$, we obtain that $d^* $
 belongs to ${\rm L}^2(\omega,\mathbb{R}^3)$ and by letting $h_j\rightarrow 0$, it can be seen that for this ansatz $\varphi_{h_j}^\natural\rightarrow \varphi_{0}^\natural$.
 
 The reconstruction for the rotation $\overline{Q}_{e,0}$ is not obvious, since on the one hand we have to maintain the rotation constraint along the sequence and on the other hand we must approach the lower bound, which excludes the simple reconstruction $\overline{Q}_{e,h_j}^\natural(\eta_1,\eta_2,\eta_3)=\overline{Q}_{e,0}(\eta_1,\eta_2)$. In order to meet both requirements we consider therefore
 \begin{align}
 \overline{Q}_{e,h_j}^\natural(\eta_1,\eta_2,\eta_3):=\overline{Q}_{e,0}(\eta_1,\eta_2)\cdot\exp(h_j\.\eta_3\.A^*(\eta_1,\eta_2)),
 \end{align}
 where $A^*\in \mathfrak{so}(3)$ is the term obtained in (\ref{inf curv}), depending on the given $\overline{Q}_{e,0}$, and we note that $A^*\in {\rm L}^{2}(\omega,\mathfrak{so}(3))$ by the coercivity of $\widetilde{W}_{\rm curv}$. Since $\exp:\mathfrak{so}(3)\rightarrow \text{\rm SO}(3)$, we obtain that $\overline{Q}_{e,h_j}^\natural\in \text{\rm SO}(3)$ and for $h_j\rightarrow 0$,  we have $\overline{Q}_{e,h_j}^\natural \rightarrow \overline{Q}_{e,0}\in {\rm L}^{2}(\Omega_1,\text{\rm SO}(3))$.
 
 Since $d^* $ need not to be differentiable, we should consider another modified recovery sequence. For fixed $\varepsilon>0$, we select $d_\varepsilon\in { {{\rm H}^{1}}}(\omega,\mathbb{R}^3)$ such that $\norm{d_\varepsilon-d^* }_{{\rm L}^2(\omega,\mathbb{R}^3)}<\varepsilon$. Therefore, accordingly we define the final recovery sequence for the deformation as following
 \begin{align}\label{recovery}
 \varphi_{h_j,\varepsilon}^\natural(\eta_1,\eta_2,\eta_3):=\;\varphi_{0}^\natural(\eta_1,\eta_2)+h_j\.\eta_3\.d_\varepsilon(\eta_1,\eta_2).
 \end{align}
 The same argument holds for $A^*$ {, i.e.,}  for fixed $\varepsilon>0$ we may choose $A_\varepsilon\in { {{\rm H}^{1}}}(\omega,\mathfrak{so}(3))$ such that $\norm{A_\varepsilon-A^*}_{{\rm L}^{2}(\omega,\mathfrak{so}(3))}<\varepsilon$. Hence, the final recovery sequence for the microrotation is 
 \begin{align}
 \overline{Q}_{e,h_j,\varepsilon}^\natural(\eta_1,\eta_2,\eta_3):=\overline{Q}_{e,0}(\eta_1,\eta_2)\cdot\exp(h_j\.\eta_3\.A_\varepsilon(\eta_1,\eta_2)).
 \end{align}
 The gradient of the new recovery sequence of deformation is 
 \begin{align}
 \nonumber\nabla_\eta\varphi_{h_j,\varepsilon}^\natural(\eta_1,\eta_2,\eta_3)&=(\nabla_{(\eta_1,\eta_2)}\varphi_{0}^\natural(\eta_1,\eta_2)|0)+h_j(0|d_\varepsilon(\eta_1,\eta_2))+h_j\eta_3(\nabla_{(\eta_1,\eta_2)}d_\varepsilon(\eta_1,\eta_2)|0)\\
 &=(\nabla\varphi_{0}^\natural(\eta_1,\eta_2)|h_jd_\varepsilon(\eta_1,\eta_2))+h_j\eta_3(\nabla d_\varepsilon(\eta_1,\eta_2)|0),
 \end{align}
 and the different terms in the curvature energy are
 \begin{align}
 \overline{Q}_{e,h_j,\varepsilon}^{\natural,T}\partial_{\eta_1}\overline{Q}_{e,h_j,\varepsilon}^{\natural}&=\exp(h_j\eta_3A_\varepsilon)^T\overline{Q}_{e,0}^T[\partial_{\eta_1}\overline{Q}_{e,0}\exp(h_j\eta_3A_{\varepsilon})+\overline{Q}_{e,0}D\exp(h_j\eta_3A_\varepsilon).[h_j\eta_3\partial_{\eta_1}A_{\varepsilon}]],\notag\\
 \overline{Q}_{e,h_j,\varepsilon}^{\natural,T}\partial_{\eta_2}\overline{Q}_{e,h_j,\varepsilon}^{\natural}&=\exp(h_j\eta_3A_\varepsilon)^T\overline{Q}_{e,0}^T[\partial_{\eta_2}\overline{Q}_{e,0}\exp(h_j\eta_3A_{\varepsilon})+\overline{Q}_{e,0}D\exp(h_j\eta_3A_\varepsilon).[h_j\eta_3\partial_{\eta_2}A_{\varepsilon}]],\\
 \nonumber\overline{Q}_{e,h_j,\varepsilon}^{\natural,T}\partial_{\eta_3}\overline{Q}_{e,h_j,\varepsilon}^{\natural}&=\exp(h_j\eta_3A_\varepsilon)^T\overline{Q}_{e,0}^T[\partial_{\eta_3}\overline{Q}_{e,0}\exp(h_j\eta_3A_{\varepsilon})+\overline{Q}_{e,0}D\exp(h_j\eta_3A_\varepsilon).[h_jA_{\varepsilon}]]\\
 &=h_j\exp(h_j\eta_3A_\varepsilon(\eta_1,\eta_2))^TD\exp(h_j\eta_3A_\varepsilon(\eta_1,\eta_2)).[A_\varepsilon],\notag
 \end{align}
with $\partial_{\eta_i}A_{\varepsilon}\in \mathfrak{so}(3)$. 
 Now we introduce the quantities
 \begin{align}
 \nonumber \widetilde{U}_0&=\overline{Q}_{e,0}^T(\nabla\varphi_0(\eta_1,\eta_2) |d^*(\eta_1,\eta_2) )[(\nabla_x\Theta)(0)]^{-1},\\
 \nonumber \widetilde{U}_{h_j}^\varepsilon &=\overline{Q}_{e,h_j,\varepsilon}^{\natural ,T}\Big((\nabla \varphi_{0}(\eta_1,\eta_2)|d_\varepsilon(\eta_1,\eta_2))+h_j\eta_3(\nabla d_\varepsilon(\eta_1,\eta_2)|0)\Big)[(\nabla_x\Theta)^\natural(\eta) ]^{-1},\\
  \widetilde{U}_{0}^\varepsilon &=\overline{Q}_{e,0}^{T}(\nabla \varphi_{0}(\eta_1,\eta_2)|d_\varepsilon(\eta_1,\eta_2))[(\nabla_x\Theta)(0) ]^{-1},\\
 \nonumber\Gamma_{h_j,\varepsilon}^{\natural}&:=\Big(\underbrace{\axl\Big( \overline{Q}_{e,h_j,\varepsilon}^{\natural , T}\partial_{\eta_1}\overline{Q}_{e,h_j,\varepsilon}^{\natural}\Big)}_{:=\,\Gamma_{h_j,\varepsilon}^{1,\natural}}\,|\, \underbrace{\axl\Big( \overline{Q}_{e,h_j,\varepsilon}^{\natural , T}\partial_{\eta_2}\overline{Q}_{e,h_j,\varepsilon}^{\natural}\Big)}_{:=\,\Gamma_{h_j,\varepsilon}^{2,\natural}}\,|\, \underbrace{\frac{1}{h_j}\axl\Big( \overline{Q}_{e,h_j,\varepsilon}^{\natural , T}\partial_{\eta_3}\overline{Q}_{e,h_j,\varepsilon}^{\natural}\Big)}_{:=\,\Gamma_{h_j,\varepsilon}^{3,\natural}}\Big)[(\nabla_x\Theta)^\natural(\eta) ]^{-1},\\
 \nonumber \Gamma_{0} &:=\Big(\underbrace{\axl\Big( \overline{Q}_{e,0}^{T}\partial_{\eta_1}\overline{Q}_{e,0}\Big)}_{:=\,\mathcal{K}_{e,s}^1}\,|\,\underbrace{\axl\Big( \overline{Q}_{e,0}^{T}\partial_{\eta_2}\overline{Q}_{e,0}\Big)}_{:=\,\mathcal{K}_{e,s}^2}\,|\,0\Big)\,[(\nabla_x\Theta)(0) ]^{-1}\,.\notag 
 %\nonumber \mathcal{A}_{h_j,\varepsilon}^\natural &:=\overline{Q}_{e,h_j,\varepsilon}^{\natural , T}D_\eta^{h_j}\overline{Q}_{e,h_j,\varepsilon}^{\natural}[(\nabla_x\Theta)^\natural(\eta) ]^{-1}\in \mathcal{I}(3),\\
 %\nonumber \mathcal{A}_{}(\eta_1,\eta_2) &:=\overline{Q}_{e,0}^{T}D\overline{Q}_{e,0}\in \mathcal{I}(3),\\
 \end{align}
 Note that
 \begin{align}
 \Gamma_{h_j,\varepsilon}^{3,\natural}
 &:=\axl\Big(\exp(h_j\eta_3A_\varepsilon(\eta_1,\eta_2))^T\D\exp(h_j\eta_3A_\varepsilon(\eta_1,\eta_2)).[A_\varepsilon]\Big).
 \end{align}
 It holds
 \begin{align*}
 &\norm{\widetilde{U}_{h_j}^\varepsilon-\widetilde{U}_{0}^\varepsilon}\rightarrow 0,\,\,\quad \text{as}\quad h_j\rightarrow 0,\hspace{3,8cm}\norm{\widetilde{U}_{h_j}^\varepsilon-\widetilde{U}_{0}}\rightarrow 0,\,\,\,\quad \text{as}\quad h_j\rightarrow 0, \varepsilon\rightarrow 0,\\
 &\norm{\Gamma_{h_j,\varepsilon}^{i,\natural}-\mathcal{K}_{e,s}^i}\rightarrow 0, \quad\text{as}\qquad h_j\rightarrow 0, \varepsilon\rightarrow 0,\quad i=1,2\,,\qquad\norm{ \Gamma_{h_j,\varepsilon}^{3,\natural}-\axl A_\varepsilon}\rightarrow 0, \quad\text{as}\quad h_j\rightarrow 0.
 \end{align*}
 We also have
\begin{align}
\nonumber\norm{\widetilde{U}_{0}^\varepsilon-\widetilde{U}_0}^2&= \norm{\overline{Q}_{e,0}^T(\nabla \varphi_{0}|d_\varepsilon)[\nabla_x\Theta(0)]^{-1}-\overline{Q}_{e,0}^T(\nabla \varphi_{0}|d^* )[\nabla_x\Theta(0)]^{-1})}^2\\
\nonumber&=\norm{\overline{Q}_{e,0}^T(0|0|d_\varepsilon-d^* )[\nabla_x\Theta(0)]^{-1}}^2\\
\nonumber&=\iprod{\overline{Q}_{e,0}^T(0|0|d_\varepsilon-d^* )[\nabla_x\Theta(0)]^{-1},\overline{Q}_{e,0}^T(0|0|d_\varepsilon-d^* )[\nabla_x\Theta(0)]^{-1}}\\
&=\iprod{\overline{Q}_{e,0}\overline{Q}_{e,0}^T(0|0|d_\varepsilon-d^* ),(0|0|d_\varepsilon-d^* )[\nabla_x\Theta(0)]^{-1}[\nabla_x\Theta(0)]^{-T}}\\
\nonumber&=\iprod{(0|0|d_\varepsilon-d^* ),(0|0|d_\varepsilon-d^* )(\widehat{\rm I}_{y_0})^{-1}}=\iprod{(0|0|d_\varepsilon-d^* )^T(0|0|d_\varepsilon-d^* ),(\widehat{\rm I}_{y_0})^{-1}}\\
\nonumber&=\iprod{(0|0|(d_\varepsilon-d^* )^T(d_\varepsilon-d^* )),( \widehat{\rm I}_{y_0})^{-1}}=\iprod{d_\varepsilon-d^* ,d_\varepsilon-d^* }=\norm{d_\varepsilon-d^* }^2\:\rightarrow 0\quad\text{as}\quad \varepsilon\rightarrow 0.
\end{align}
We may write
 \begin{align}
 \nonumber \mathcal{J}_{h_j}^{\natural,\textrm{mp}}(\varphi_{h_j,\varepsilon}^\natural,\overline{Q}_{e,h_j,\varepsilon}^\natural)&:=\int_{\Omega_1}  {W}_{\rm mp}(\widetilde{U}_{h_j}^\varepsilon)\,\det ((\nabla_x \Theta)^\natural)(\eta)\,dV_\eta
 \\&=\int_{\Omega_1}\Big[  {W}_{\rm mp}(\widetilde{U}_{h_j}^\varepsilon)-  {W}_{\rm mp}(\widetilde{U}_0)+  {W}_{\rm mp}(\widetilde{U}_0)\Big]\,\det ((\nabla_x \Theta)^\natural)(\eta)\,dV_\eta\notag\\
  &=\int_{\Omega_1}\Big[  {W}_{\rm mp}(\widetilde{U}_{h_j}^\varepsilon+\widetilde{U}_0-\widetilde{U}_0)-  {W}_{\rm mp}(\widetilde{U}_0)+  {W}_{\rm mp}(\widetilde{U}_0)\Big]\,\det ((\nabla_x \Theta)^\natural)(\eta)\,dV_\eta\\
 \nonumber &\leq\int_{\Omega_1}\Big[|  {W}_{\rm mp}(\widetilde{U}_{h_j}^\varepsilon+\widetilde{U}_0-\widetilde{U}_0)-  {W}_{\rm mp}(\widetilde{U}_0)|+  {W}_{\rm mp}(\widetilde{U}_0)\Big]\,\det ((\nabla_x \Theta)^\natural)(\eta)\,dV_\eta,
 \end{align}
 where we used that ${W}_{\rm mp}$ is positive. The exact quadratic expansion in the neighborhood of the point $\widetilde{U}_{h_j}^\varepsilon=\widetilde{U}_{0}+\widetilde{U}_{h_j}^\varepsilon-\widetilde{U}_{0} $ for  $  {W}_{\rm mp}$ is given by
 \begin{align*}
   {W}_{\rm mp}(\widetilde{U}_0+\widetilde{U}_{h_j}^\varepsilon-\widetilde{U}_0)=  {W}_{\rm mp}(\widetilde{U}_0)+\bigl\langle D  {W}_{\rm mp}(\widetilde{U}_0),\widetilde{U}_{h_j}^\varepsilon-\widetilde{U}_0\bigr\rangle+\frac{1}{2}D^2  {W}_{\rm mp}(\widetilde{U}_0).(\widetilde{U}_{h_j}^\varepsilon-\widetilde{U}_0,\widetilde{U}_{h_j}^\varepsilon-\widetilde{U}_0)\,.
 \end{align*}
 Therefore, with the assumption that $\norm{\widetilde{U}_{h_j}^\varepsilon-\widetilde{U}_0}\leq 1$, we have the following relations
 \begin{align}\label{supmp}
 \nonumber \mathcal{J}_{h_j}^{\natural,\textrm{mp}}&(\varphi_{h_j,\varepsilon}^\natural,  \overline{Q}_{e,h_j,\varepsilon}^\natural)\leq\int_{\Omega_1} \Big[ {W}_{\rm mp}(\widetilde{U}_0)+\norm{D  {W}_{\rm mp}(\widetilde{U}_0)}\norm{\widetilde{U}_{h_j}^\varepsilon-\widetilde{U}_0}+\frac{1}{4}\norm{D^2  {W}_{\rm mp}(\widetilde{U}_0)}\norm{\widetilde{U}_{h_j}^\varepsilon-\widetilde{U}_0}^2\;\Big]\,\det ((\nabla_x \Theta)^\natural)(\eta)\, dV_\eta\\
 \nonumber&\leq \int_{\Omega_1} \Big[ {W}_{\rm mp}(\widetilde{U}_0)+C\norm{\widetilde{U}_0}\norm{\widetilde{U}_{h_j}^\varepsilon-\widetilde{U}_0}+C_1\norm{\widetilde{U}_{h_j}^\varepsilon-\widetilde{U}_0}\;\Big]\,\det ((\nabla_x \Theta)^\natural)(\eta)\, dV_\eta\\
 &\leq \int_{\Omega_1} \Big[ {W}_{\rm mp}(\widetilde{U}_0)+(C\norm{\widetilde{U}_0}+C_1)\norm{\widetilde{U}_{h_j}^\varepsilon-\widetilde{U}_0}\; \Big]\,\det ((\nabla_x \Theta)^\natural)(\eta)\,dV_\eta,
 \end{align}
 where $C$ and $C_1$ are upper bounds for $\norm{D  {W}_{\rm mp}(\widetilde{U}_0)}$ and $\norm{D^2  {W}_{\rm mp}(\widetilde{U}_0)}$, respectively.
 Now we consider the terms of $ \widetilde{W}_{\rm curv}$
 \begin{align}\label{supcurv}
 \nonumber \mathcal{J}_{h_j}^{\natural,\textrm{curv}}(\Gamma_{h_j,\varepsilon}^\natural)&:=\int_{\Omega_1} \widetilde{W}_{\rm curv}\Big((\Gamma_{h_j,\varepsilon}^{1,\natural},\Gamma_{h_j,\varepsilon}^{2,\natural}, \Gamma_{h_j,\varepsilon}^{3,\natural})[(\nabla_x\Theta)^\natural(\eta) ]^{-1}\Big)\,\det ((\nabla_x \Theta)^\natural)(\eta)\;dV_\eta\notag\\&\leq  \int_{\Omega_1}\Big[ \widetilde{W}_{\rm curv}\Big((\Gamma_{h_j,\varepsilon}^{1,\natural},\Gamma_{h_j,\varepsilon}^{2,\natural}, \Gamma_{h_j,\varepsilon}^{3,\natural})[(\nabla_x\Theta)^\natural(\eta) ]^{-1}\Big) - \widetilde{W}_{\rm curv}\Big((\Gamma_{0}^{1},\Gamma_{0}^{2}, A_\varepsilon)[(\nabla_x\Theta)^\natural(\eta) ]^{-1}\Big)\notag\\&\qquad\quad+ \widetilde{W}_{\rm curv}\Big((\Gamma_{0}^{1},\Gamma_{0}^{2}, A_\varepsilon)[(\nabla_x\Theta)^\natural(\eta) ]^{-1}\Big) - \widetilde{W}_{\rm curv}\Big((\Gamma_{0}^{1},\Gamma_{0}^{2}, A^*)[(\nabla_x\Theta)^\natural(\eta) ]^{-1}\Big)\\&\qquad\quad+\widetilde{W}_{\rm curv}\Big((\Gamma_{0}^{1},\Gamma_{0}^{2}, A^*)[(\nabla_x\Theta)^\natural(\eta) ]^{-1}\Big)\Big]\,\det ((\nabla_x \Theta)^\natural)(\eta)\;dV_\eta\notag\\&\leq  \int_{\Omega_1}\Big[ \Big|\widetilde{W}_{\rm curv}\Big((\Gamma_{h_j,\varepsilon}^{1,\natural},\Gamma_{h_j,\varepsilon}^{2,\natural}, \Gamma_{h_j,\varepsilon}^{3,\natural})[(\nabla_x\Theta)^\natural(\eta) ]^{-1}\Big) - \widetilde{W}_{\rm curv}\Big((\Gamma_{0}^{1},\Gamma_{0}^{2}, A_\varepsilon)[(\nabla_x\Theta)^\natural(\eta) ]^{-1}\Big)\Big|\notag\\&\qquad\quad+ \Big|\widetilde{W}_{\rm curv}\Big((\Gamma_{0}^{1},\Gamma_{0}^{2}, A_\varepsilon)[(\nabla_x\Theta)^\natural(\eta) ]^{-1}\Big) - \widetilde{W}_{\rm curv}\Big((\Gamma_{0}^{1},\Gamma_{0}^{2}, A^*)[(\nabla_x\Theta)^\natural(\eta) ]^{-1}\Big)\Big|\notag\\&\qquad\quad+\widetilde{W}_{\rm curv}\Big((\Gamma_{0}^{1},\Gamma_{0}^{2}, A^*)[(\nabla_x\Theta)^\natural(\eta) ]^{-1}\Big)\Big]\,\det ((\nabla_x \Theta)^\natural)(\eta)\;dV_\eta,\notag
 \end{align}
 where we have used the triangle inequality. 
 
 Note that beside the boundedness of  $\det [\nabla_x \Theta]^\natural(0)$, due to the hypothesis that  $\det[ \nabla_x \Theta(0)]\geq a_0>0$, it follows that there exits a constant $C>0$ such that
 \begin{align}
 \forall \, x\in \overline{\omega}\col\qquad \lVert  [\nabla_x \Theta(0)]^{-1}\rVert\leq C.
 \end{align}
 We notice that both energy parts are positive and $\det [\nabla_x \Theta](0)$ is bounded. Also $ \widetilde{W}_{\rm curv}$ is continuous  and $\norm{A_\varepsilon-A^*}_{{\rm L}^{2}(\omega,\mathfrak{so}(3))}<\varepsilon$. By using (\ref{inf curv}) and \eqref{convdet}, and applying $\limsup_{h_j\rightarrow 0}$ on both sides of (\ref{supmp}) and (\ref{supcurv}) with $h_j\to0$ and $\varepsilon \rightarrow 0$ we get
 \begin{align}
 \nonumber\limsup_{h_j\rightarrow 0} \mathcal{J}_{h_j}^{\natural}(\varphi_{h_j,\varepsilon}^\natural,\overline{Q}_{e,h_j,\varepsilon}^\natural)&\leq  \int_{\Omega_1}( {W}_{\rm mp}(\widetilde{U}_0)+\widetilde{W}_{\rm curv}\Big((\Gamma_{0}^{1},\Gamma_{0}^{2}, A^*)[(\nabla_x\Theta)^\natural(0) ]^{-1}\Big)\,\det [\nabla_x \Theta](0)\;dV_\eta\\
 &= \int_{\Omega_1}(  {W}_{\rm mp}(\widetilde{U}_0)+\widetilde{W}_{\rm curv}\Big((\Gamma_{0}^{1},\Gamma_{0}^{2},0)[(\nabla_x\Theta)(0) ]^{-1}\Big)\,\det [\nabla_x \Theta](0)\;dV_\eta.
 \end{align}
 However, $  {W}_{\rm mp}(\widetilde{U}_0)$ and $\widetilde{W}_{\rm curv}\Big((\Gamma_{0}^{1},\Gamma_{0}^{2},0)[(\nabla_x\Theta)(0) ]^{-1}\Big)$ are already independent of the third variable $\eta_3$, hence  we deduce
 \begin{align}
 \nonumber\limsup_{h_j\rightarrow 0} \mathcal{J}_{h_j}^{\natural}(\varphi_{h_j,\varepsilon}^\natural,\overline{Q}_{e,h_j,\varepsilon}^\natural)&\leq \mathcal{J}_{0}(m ,\overline{Q}_{e,0}),\quad\quad \varphi ^\natural\equiv\varphi ,\quad\quad \overline{Q}_{e,0}^\natural\equiv\overline{Q}_{e,0}\quad\quad \text{and}\quad\quad m=\varphi_0\,.\qedhere
 \end{align}
 \end{proof}
 
 \section{The Gamma-limit including external loads}\label{sec load}
 The main result of this paper is the following theorem
 \begin{theorem}\label{gammalimit I}
 	Assume  that the initial configuration  is defined by  a continuous injective mapping $\,y_0:\omega\subset\mathbb{R}^2\rightarrow\mathbb{R}^3$  which admits an extension to $\overline{\omega}$ into  $C^2(\overline{\omega};\mathbb{R}^3)$ such that $\det[\nabla_x\Theta(0)] \geq\, a_0 >0$ on $\overline{\omega}$,	where $a_0$ is a constant, and  assume that the boundary data satisfy the conditions
 	\begin{equation}
 	\varphi^\natural_d=\varphi_d\big|_{\Gamma_1} \text{(in the sense of traces) for } \ \varphi_d\in {\rm H}^1(\Omega_1;\mathbb{R}^3).
 	\end{equation}
 	Let the constitutive parameters satisfy 
 	\begin{align}
 	\mu\,>0, \quad\quad \kappa>0, \quad\quad \mu_{\rm c}> 0,\quad\quad a_1>0,\quad\quad a_2>0,\quad\quad a_3>0\.,
 	\end{align} 
% 	and
% 	\begin{align}
% 	h_j\,\max \{\sup_{(x_1,x_2)\in {\omega}}|{\kappa_1}|,\sup_{(x_1,x_2)\in {\omega}}|{\kappa_2}|\}<2\qquad \forall \, j\in \mathbb{N},\end{align}
 	Then, 
 	for any sequence $(\varphi_{h_j}^\natural, \overline{Q}_{e,h_j}^{\natural})\in X$ such that $(\varphi_{h_j}^\natural, \overline{Q}_{e,h_j}^{\natural})\rightarrow(\varphi_0, \overline{Q}_{e,0})$ as $h_j\to 0$,	the sequence of functionals $\mathcal{I}_{h_j}\col X\rightarrow \overline{\mathbb{R}}$ 
 	\begin{align}
 	\mathcal{I}_{h_j}^\natural(\varphi^\natural,  \nabla^{h_j}_\eta \varphi^\natural,\overline{Q}_e^\natural,\Gamma_{h_j}^\natural) 
 	&=
 	\begin{cases}
 	\displaystyle\frac{1}{h_j}\,J_{h_j}^\natural(\varphi^\natural,  \nabla^{h_j}_\eta \varphi^\natural, \overline{Q}_e^\natural,\Gamma_{h_j}^\natural)-\frac{1}{h} {\Pi}^\natural_{h_j}(\varphi^\natural,\overline{Q}_e^\natural)\quad &\text{if}\;\; (\varphi^\natural,\overline{Q}_e^\natural)\in \mathcal{S}',
 	\\
 	+\infty\qquad &\text{else in}\; X\,,
 	\end{cases}
 	\end{align}
 	\textit{ $\Gamma$-converges} to the limit energy  functional $\mathcal{I}_0\col X\rightarrow \overline{\mathbb{R}}$ defined by
 	\begin{equation}
 	\mathcal{I}_0 (m,\overline{Q}_{e,0}) =
 	\begin{cases}
 	\mathcal{J}_0 (m,\overline{Q}_{e,0})-\Pi(m,\overline{Q}_{e,0})\quad &\text{if}\quad (m,\overline{Q}_{e,0})\in \mathcal{S}_\omega'\.,
 	\\
 	+\infty\qquad &\text{else in}\; X,
 	\end{cases}
 	\end{equation}
 	where
 	\begin{equation}
 	\mathcal{J}_0 (m,\overline{Q}_{e,0}) =
 	\begin{cases}
 \dd	\int_{\omega} [ {W}_{\rm mp}^{{\rm hom}}(\mathcal{E}_{m,\overline{Q}_{e,0}})+\widetilde{W}_{\rm curv}^{\rm hom}(\mathcal{K}_{e,s})]\det (\nabla y_0|n_0)\; d\omega\quad &\text{if}\quad (m,\overline{Q}_{e,0})\in \mathcal{S}_\omega'\.,
 	\\
 	+\infty\qquad &\text{else in}\; X,
 	\end{cases}
 	\end{equation}
 	and $\Pi(m,\overline{Q}_{e,0})=\Pi_{\widetilde{f},\omega}(\widetilde{u}_0)+\Pi_{\widetilde{c},\gamma_1}(\overline{Q}_{e,0})$  {defined by the external loads}.
 \end{theorem}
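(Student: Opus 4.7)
The plan is to reduce Theorem \ref{gammalimit I} to Theorem \ref{Theorem homo} by treating the rescaled external load term $\frac{1}{h_j}\Pi^\natural_{h_j}$ as a continuous (indeed bounded linear) perturbation of the stored-energy functional $\mathcal{J}^\natural_{h_j}$. It is a standard property of $\Gamma$-convergence that adding a continuous functional preserves the $\Gamma$-limit, so the only real work is verifying that continuity along the considered sequences.

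First I would note that, by the scaling identities \eqref{load}, the factors of $h_j$ cancel and one obtains
\begin{align*}
\frac{1}{h_j}\Pi^\natural_{h_j}(\varphi^\natural,\overline{Q}_e^\natural)=\int_{\Omega_1}\iprod{\widetilde{f}^\natural,\widetilde{u}^\natural}\,dV_\eta+\int_{\Gamma_1}\iprod{\widetilde{c}^\natural,\overline{Q}_e^\natural}\,dS_\eta,
\end{align*}
a bounded linear functional on $X$ by the regularity assumption \eqref{regu2} and the Cauchy--Schwarz inequality, and crucially \emph{independent of} $h_j$. Its continuity on $X$ in the strong $L^2$-topology of the bulk is immediate; for the boundary term one additionally uses the continuity of the trace operator from ${\rm H}^1(\Omega_1)$ into $L^2(\Gamma_1)$.

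For the liminf inequality, I would consider a sequence $(\varphi^\natural_{h_j},\overline{Q}^\natural_{e,h_j})\to(m,\overline{Q}_{e,0})$ in $X$. If $\liminf\mathcal{I}^\natural_{h_j}=+\infty$ there is nothing to prove; otherwise the a priori bound on the loads established at the end of Section \ref{construc} shows that $\mathcal{J}^\natural_{h_j}$ also remains bounded, so the equi-coercivity from Subsection \ref{equi-comp} yields a (non-relabeled) subsequence converging weakly in $X'$. By the Rellich--Kondrachov theorem one gets strong ${\rm L}^2(\Omega_1)$-convergence for both $\varphi^\natural_{h_j}$ and $\overline{Q}^\natural_{e,h_j}$, and compactness of the trace operator provides strong ${\rm L}^2(\Gamma_1)$-convergence of the boundary values. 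Consequently $\frac{1}{h_j}\Pi^\natural_{h_j}(\varphi^\natural_{h_j},\overline{Q}^\natural_{e,h_j})\to\Pi(m,\overline{Q}_{e,0})$, and combining with the liminf proved in Subsection \ref{liminf} yields $\mathcal{I}_0(m,\overline{Q}_{e,0})\leq\liminf_{h_j\to 0}\mathcal{I}^\natural_{h_j}$.

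For the limsup inequality, I would reuse the recovery sequence $(\varphi^\natural_{h_j,\varepsilon},\overline{Q}^\natural_{e,h_j,\varepsilon})$ constructed in Lemma \ref{thelimsup}. Thanks to the explicit ansatz $\varphi^\natural_{h_j,\varepsilon}=\varphi^\natural_0+h_j\,\eta_3\,d_\varepsilon$ with $d_\varepsilon\in{\rm H}^1(\omega,\mathbb{R}^3)$ and $\overline{Q}^\natural_{e,h_j,\varepsilon}=\overline{Q}_{e,0}\exp(h_j\,\eta_3\,A_\varepsilon)$ with $A_\varepsilon\in{\rm H}^1(\omega,\mathfrak{so}(3))$, both fields converge strongly in ${\rm H}^1(\Omega_1)$ to $(\varphi_0^\natural,\overline{Q}_{e,0})$ as $h_j\to 0$ (and $\varepsilon\to 0$ in the prescribed order), hence their traces converge strongly in ${\rm L}^2(\Gamma_1)$. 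Therefore $\frac{1}{h_j}\Pi^\natural_{h_j}(\varphi^\natural_{h_j,\varepsilon},\overline{Q}^\natural_{e,h_j,\varepsilon})\to\Pi(m,\overline{Q}_{e,0})$, which together with the limsup estimate from Lemma \ref{thelimsup} gives $\limsup_{h_j\to 0}\mathcal{I}^\natural_{h_j}\leq\mathcal{I}_0(m,\overline{Q}_{e,0})$. The main obstacle is essentially bookkeeping: it reduces to ensuring that the load perturbation is continuous, rather than merely lower semicontinuous, along the considered sequences, which is handled uniformly in $h_j$ by the compactness of the embedding ${\rm H}^1(\Omega_1)\hookrightarrow{\rm L}^2(\Omega_1)$ and by the continuity of the trace operator.
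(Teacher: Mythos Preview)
Your proposal is correct and follows the same strategy as the paper: reduce to Theorem~\ref{Theorem homo} by treating the rescaled load $\frac{1}{h_j}\Pi^\natural_{h_j}$ as a continuous perturbation, then verify that it passes to the limit both along arbitrary convergent sequences (for the liminf) and along the recovery sequence of Lemma~\ref{thelimsup} (for the limsup). The paper states this more tersely, delegating the explicit computation of the load along the recovery sequence to Remark~\ref{rem} and then invoking continuity; your version spells out the compactness and trace arguments that make this work.

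One minor inaccuracy worth flagging: the rescaled load is not literally \emph{independent} of $h_j$, since $\widetilde{f}^\natural(\eta)=\widetilde{f}(\eta_1,\eta_2,h_j\eta_3)$, $\widetilde{c}^\natural(\eta)=\widetilde{c}(\eta_1,\eta_2,h_j\eta_3)$, and $\Theta^\natural(\eta)=y_0+h_j\eta_3\,n_0$ all carry $h_j$ through the scaling $\zeta$. This does not damage your argument, because you verify the liminf and limsup inequalities directly rather than appealing to the abstract ``$\Gamma$-limit plus fixed continuous functional'' principle, and these $h_j$-dependent data converge as $h_j\to 0$ by the assumed regularity.
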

\begin{rem}\label{rem}

 Before proving the above theorem, we will give the expression of the external loads potential in $\Omega_1$. We have
  	\begin{align}
  	{\Pi}^\natural_h(\varphi^\natural,\overline{Q}_e^\natural)={\Pi}_f^\natural(\varphi^\natural)+{\Pi}_{c}^\natural(\overline{Q}_e^\natural)\,,\quad\quad{\Pi}_f^\natural(\varphi^\natural)=h\,\int_{\Omega_1} \iprod{\widetilde{f}^\natural,\widetilde{u}^\natural}\,dV_\eta\,, \quad{\Pi}_c^\natural(\overline{Q}_e^\natural)=h\,\int_{\Gamma_1} \iprod{\widetilde{c}^\natural,\overline{Q}_e^\natural}\,dS_\eta\,,
  	\end{align}
  	with $\widetilde{f}^\natural(\eta)=\widetilde{f}(\zeta(\eta))$, $\widetilde{u}^\natural(\eta)=\widetilde{u}(\zeta(\eta))$,
  	$\widetilde{c}^\natural(\zeta)=\widetilde{c}(\zeta(\eta))$, $\overline{Q}_e^\natural(\eta)=\overline{Q}_e(\zeta(\eta))$ and $\widetilde{u}^\natural(\eta_i)=\varphi^\natural(\eta_i)-\Theta^\natural(\eta_i)$. We use the following expressions
  	\begin{align}
  	\nonumber\Theta^\natural(\eta)&=y_0^\natural(\eta_1,\eta_2)+h_j\.\eta_3\.n_0(\eta_1,\eta_2)\,,\quad\quad \varphi^\natural_{h_j}(\eta)=\varphi_{0}^\natural(\eta_1,\eta_2)+h_j\.\eta_3\.d^*(\eta_1,\eta_2)\,,\\
  	\widetilde{u}^\natural(\eta_i)&=\varphi^\natural(\eta_i)-\Theta^\natural(\eta_i)=\underbrace{\Big(\varphi_{0}^\natural(\eta_1,\eta_2)-y_0^\natural(\eta_1,\eta_2)\Big)}_{\widetilde{u}_0(\eta_1,\eta_2)}+h_j\eta_3\Big(d^*(\eta_1,\eta_2)-n_0(\eta_1,\eta_2)\Big)\,.
  	\end{align}
  	We calculate the  {work due to the} loads separately. We have
  	\begin{align}
  	\nonumber{\Pi}_f^{\natural}(\varphi^\natural_{h_j})&=h_j\int_{\Omega_1}\iprod{\widetilde{f}^\natural,\widetilde{u}^\natural}dV_\eta=h_j\int_{\Omega_1}\iprod{\widetilde{f}^\natural,\widetilde{u}_0(\eta_1,\eta_2)}dV_\eta+h_j^2\eta_3\int_{\Omega_1}\iprod{\widetilde{f}^\natural,(d^*(\eta_1,\eta_2)-n_0(\eta_1,\eta_2))}dV_\eta\\
  	\nonumber&=h_j\int_{\omega}\int_{-\frac{1}{2}}^{\frac{1}{2}}\iprod{\widetilde{f}^\natural,\widetilde{u}_0(\eta_1,\eta_2)}d\eta_3\,d\omega+h_j^2\int_{\omega}\int_{-\frac{1}{2}}^{\frac{1}{2}}\eta_3\iprod{\widetilde{f}^\natural,(d^*(\eta_1,\eta_2)-n_0(\eta_1,\eta_2))}d\eta_3\,d\omega\\
  	&=h_j\int_\omega\iprod{\int_{-\frac{1}{2}}^{\frac{1}{2}}\widetilde{f}^\natural\,d\eta_3,\widetilde{u}_0(\eta_1,\eta_2)}\,d\omega+h_j^2\int_\omega\iprod{\int_{-\frac{1}{2}}^{\frac{1}{2}}\eta_3\widetilde{f}^\natural\,d\eta_3,(d^*-n_0)(\eta_1,\eta_2)}\,d\omega:=\Pi_{\widetilde{f},\omega}(\widetilde{u}_0)\,.
  	\end{align}
  	For applying the same method for the potential of external applied boundary surface couple, we need to have an approximation for the exponential function which is already used in the expression of the recovery sequence for the microrotation $\overline{Q}^\natural_{e,h_j}$, i.e., $\exp(X)= \id+X+\frac{1}{2!}X^2+\cdots$,
  	  	which implies
  	\begin{align}
  	\overline{Q}_{e,h_j}^\natural=\overline{Q}_{e,0}\cdot\exp(h_j\.\eta_3\.A^*(\eta_1,\eta_2))=\overline{Q}_{e,0}+\overline{Q}_{e,0}h_j\.\eta_3\.A^*(\eta_1,\eta_2)+\frac{1}{2}\overline{Q}_{e,0}h_j^2\.\eta_3^2\.A^*(\eta_1,\eta_2)^2+\cdots\,.
  	\end{align}
  	Hence,
  	\begin{align}
  	\nonumber{\Pi}_{c}^\natural(\overline{Q}_{e,h_j}^\natural)&=h_j\int_{\Gamma_1}\iprod{\widetilde{c}^\natural,\overline{Q}_{e,0}+\overline{Q}_{e,0}h_j\eta_3A^*(\eta_1,\eta_2)+\frac{1}{2}\overline{Q}_{e,0}h_j^2\.\eta_3^2\.A^*(\eta_1,\eta_2)^2+\cdots}\,dS_\eta\\
  	\nonumber&=h_j\int_{\Gamma_1}\iprod{\widetilde{c}^\natural,\overline{Q}_{e,0}}\,dS_\eta+h_j^2\eta_3\int_{\Gamma_1}\iprod{\widetilde{c}^\natural,\overline{Q}_{e,0}A^*(\eta_1,\eta_2)}\,dS_\eta+\frac{1}{2}h_j^3\eta_3^2\int_{\Gamma_1}\iprod{\widetilde{c}^\natural,\overline{Q}_{e,0}A^*(\eta_1,\eta_2)^2}\,dS_\eta+\cdots\\
  	\nonumber&=h_j\int_{(\gamma_1\times [-\frac{1}{2},\frac{1}{2}])}\iprod{\widetilde{c}^\natural,\overline{Q}_{e,0}}\,dS_\eta+h_j^2\eta_3\int_{(\gamma_1\times [-\frac{1}{2},\frac{1}{2}])}\iprod{\widetilde{c}^\natural,\overline{Q}_{e,0}A^*(\eta_1,\eta_2)}\,dS_\eta+O(h_j^3)\\
  	&=h_j\int_{\gamma_1 }\int_{-\frac{1}{2}}^{\frac{1}{2}}\iprod{\widetilde{c}^\natural,\overline{Q}_{e,0}}\,d\eta_3\,ds+h_j^2\eta_3\int_{\gamma_1 }\int_{-\frac{1}{2}}^{\frac{1}{2}}\iprod{\widetilde{c}^\natural,\overline{Q}_{e,0}A^*(\eta_1,\eta_2)}\,d\eta_3\,ds+O(h_j^3)\\
  	\nonumber&=\underbrace{h_j\int_{\gamma_1}\iprod{\int_{-\frac{1}{2}}^{\frac{1}{2}}\widetilde{c}^\natural\,d\eta_3,\overline{Q}_{e,0}}\,ds+h_j^2\int_{\gamma_1}\iprod{\int_{-\frac{1}{2}}^{\frac{1}{2}}\eta_3\widetilde{c}^\natural\,d\eta_3,\overline{Q}_{e,0}A^*(\eta_1,\eta_2)}\,ds}_{:=\,\Pi_{\widetilde{c},\gamma_1}(\overline{Q}_{e,0})}+\,O(h_j^3)\,.
  	\end{align}
  	 Therefore,
  	\begin{align}
\Pi_{h_j}^\natural(\varphi^\natural,\overline{Q}_{e}^\natural)&=\Pi_{\widetilde{f},\omega}(\widetilde{u}_0)+\Pi_{\widetilde{c},\gamma_1}(\overline{Q}_{e,0})+O(h_j^3)=\Pi(m,\overline{ Q}_{e,0})+O(h^3_{h_j})\,,\qquad \widetilde{u}_0=m-y_0\,,
  	\end{align}
%  	where 
%  	\begin{align}
%%  	\nonumber\Pi_{h_j,\omega}^\natural(\varphi^\natural,\overline{Q}_e^\natural)&=\int_\omega\Big(h_j\iprod{\int_{-\frac{1}{2}}^{\frac{1}{2}}\widetilde{f}^\natural\,d\eta_3,\widetilde{u}_0^\natural(\eta_1,\eta_2)}+h_j^2\iprod{\int_{-\frac{1}{2}}^{\frac{1}{2}}\eta_3\widetilde{f}^\natural\,d\eta_3,(d^*-n_0)(\eta_1,\eta_2)}\Big)\,da+\Pi_{c,\omega}^\natural(\overline{Q}_{e,0})\,,\\
%  	\Pi_{h_j,\gamma_1}^\natural(\overline{Q}_e^\natural)&=\Pi_{c,\gamma_1}^\natural(\overline{Q}_{e,0})\,,
%  	\end{align}
  	which regularity condition confirm the boundedness and continuity of external loads.
  \end{rem}
Now we come back to the proof of Theorem \ref{gammalimit I}.

\begin{proof}[Proof of Theorem \ref{gammalimit I}]
	As a first step we have  considered the functionals
	\begin{align}
	\mathcal{J}_h^\natural(\varphi^\natural,  \nabla^h_\eta \varphi^\natural,\overline{Q}_e^\natural,\Gamma_h^\natural) 
	&=
	\begin{cases}
	\displaystyle\frac{1}{h}\,J_h^\natural(\varphi^\natural,  \nabla^h_\eta \varphi^\natural, \overline{Q}_e^\natural,\Gamma_h^\natural)\quad &\text{if}\;\; (\varphi^\natural,\overline{Q}_e^\natural)\in \mathcal{S}',
	\\
	+\infty\qquad &\text{else in}\; X.
	\end{cases}
	\end{align}
	In subsections \ref{liminf} and \ref{limsup}, we have shown that the following inequality holds
		\begin{align}
		\limsup_{h_j\rightarrow 0} \mathcal{J}_{h_j}^{\natural}(\varphi_{h_j,\varepsilon}^\natural,\overline{Q}_{e,h_j,\varepsilon}^\natural)\leq \mathcal{J}_{0}(\varphi_0 ,\overline{Q}_{e,0})\leq\liminf_{h_j\rightarrow 0} \mathcal{J}_{h_j}^{\natural}(\varphi_{h_j,\varepsilon}^\natural,\overline{Q}_{e,h_j,\varepsilon}^\natural)\,,
		\end{align}
		which implies that $\mathcal{J}_{0}(\varphi_0 ,\overline{Q}_{e,0})$ is the $\Gamma$-$\lim$ of the sequence $\mathcal{J}_{h_j}^{\natural}(\varphi_{h_j,\varepsilon}^\natural,\overline{Q}_{e,h_j,\varepsilon}^\natural)$, i.e.,
		\begin{align}
		\mathcal{J}_{0}(\varphi_0 ,\overline{Q}_{e,0})=\Gamma\text{-}\lim(\mathcal{J}_{h_j}^{\natural}(\varphi_{h_j,\varepsilon}^\natural,\overline{Q}_{e,h_j,\varepsilon}^\natural))\,,\qquad m\equiv \varphi_{0}\,.
		\end{align}
	%	We should note that equicoercivity with existence of $\Gamma$-convergence, will admit the convergence of the minimizing problem. % On the other hand, obviously the external load ${\Pi}_{h_j}^\natural(\varphi^\natural,\overline{Q}_e^\natural)$ is continues. 
		Remark \ref{rem}, shows that the family $(\mathcal{J}_{h_j}^\natural(\varphi^\natural,\overline{Q}_e^\natural)-{\Pi}_{h_j}^\natural(\varphi^\natural,\overline{Q}_e^\natural))_j$ is $\Gamma$-convergent (because the external load potential is continuous). This guarantees the existence of $\Gamma$-convergence for the family $(\mathcal{I}_{h_j}^\natural)_j$. Therefore, we may write
		\begin{align}
		\mathcal{I}_0(m,\overline{Q}_{e,0})=\Gamma\text{-}\lim \mathcal{I}_{h_j}^\natural(\varphi_{h_j,\varepsilon}^\natural,\overline{Q}_{e,h_j,\varepsilon}^\natural)=\mathcal{J}_0(m,\overline{Q}_{e,0})-\Pi(\varphi_0,\overline{Q}_{e,0})\,,\qquad m\equiv \varphi_{0}\,,
		\end{align}
		which is the desired formula.		
\end{proof}

 \section{Consistency with related shell and plate models}\label{comparison}
 \subsection{A comparison to the Cosserat flat shell $\Gamma$-limit}
 In this part we check whether our model is consistent with the Cosserat flat shell model obtained in \cite{neff2007geometrically1}. In the case of the plate model (flat initial configuration) we can assume that $\Theta(x_1,x_2,x_3)=(x_1,x_2,x_3)$ which gives $\nabla_x\Theta=\id_3$ and $y_0(x_1,x_2)=(x_1,x_2):={\text{id}}(x_1,x_2)$. Also $Q_0=\id_3$, $n_0=e_3$ and $\overline{Q}_{e,0}(x_1,x_2)=\overline{R}(x_1,x_2)$.
 
 The family of functionals \cite{Pietraszkiewicz-book04, Pietraszkiewicz10} coincide with that considered in the analysis of $\Gamma$-\,convergence for a flat referential configuration, while its descaled $\Gamma$-limit is
 	\begin{equation}
 \mathcal{J}_0  (m,\overline{R}) =
 \begin{cases}
 \dd\int_{\omega} h\,[ {W}_{\rm mp}^{{\rm hom}}(\mathcal{E}_{m,s}^{\rm plate})+\overline{W}_{\rm curv}^{\rm hom}(\mathcal{K}_{e,s}^{\rm plate})]\; d\omega\quad &\text{if}\quad (m,\overline{R})\in \mathcal{S}_\omega'\.,
 \\
 +\infty\qquad &\text{else in}\; X,
 \end{cases}
 \end{equation}
 where 
 \begin{align}
 \nonumber\mathcal{E}_{m,s}^{\rm plate}&=\overline{R}^{T}(\nabla m|0)-\id_2^\flat=\overline{R}^{T} (\nabla m|0)-\id_3+e_3\otimes e_3,\\
 \mathcal{K}_{e,s}^{\rm plate} &=\Big(\mathrm{axl}(\overline{Q}^{T}_{e,0}\,\partial_{x_1} \overline{Q}_{e,0})\,|\, \mathrm{axl}(\overline{Q}^{T}_{e,0}\,\partial_{x_2} \overline{Q}_{e,0})\,|0\Big)\not\in {\rm Sym}(3)\,,
 \end{align}
 and
 \begin{align}
\notag  {W}_{\rm mp}^{\rm hom}
 (\mathcal{E}_{m,s}^{\rm plate})&=
 \, \mu\,\lVert  \mathrm{sym}\,   \,[\mathcal{E}_{m,s}^{\rm plate}]^{\parallel}\rVert^2 +  \mu_{\rm c}\,\lVert \mathrm{skew}\,   \,[\mathcal{E}_{m,s}^{\rm plate}]^{\parallel}\rVert^2 +\,\dfrac{\lambda\,\mu\,}{\lambda+2\,\mu\,}\,\big[ \mathrm{tr}    ([\mathcal{E}_{m,s}^{\rm plate}]^{\parallel})\big]^2 +\frac{2\,\mu\, \,  \mu_{\rm c}}{\mu_c\,+\mu\,}\norm{[\mathcal{E}_{m,s}^{\rm plate}] ^T\,e_3}^2\\
 &=W_{\mathrm{shell}}\big( [\mathcal{E}_{m,s}^{\rm plate}]^{\parallel} \big)+\frac{2\,\mu\, \,  \mu_{\rm c}}{\mu_c\,+\mu\,}\lVert [\mathcal{E}_{m,s}^{\rm plate}]^{\perp}\rVert^2,\\ \overline{W}^{\rm hom}_{\rm curv}(\mathcal{K}_{e,s}^{\rm plate})
 &=\inf_{A\in \mathfrak{so}(3)}\overline{W}^*_{\rm curv}\Big(\mathrm{axl}(\overline{R}^{T}\,\partial_{\eta_1} \overline{R})\,|\, \mathrm{axl}(\overline{R}^{T}\,\partial_{\eta_2} \overline{R})\,|\,\axl(A)\,\Big),\notag
 \end{align}
together with
\begin{align}
[\mathcal{E}_{m,s}^{\rm plate}]^\parallel\coloneqq(\id_3-e_3\otimes e_3) \,[\mathcal{E}_{m,s}^{\rm plate}], \qquad \qquad [\mathcal{E}_{m,s}^{\rm plate}]^\perp\coloneqq(e_3\otimes e_3)\,[\mathcal{E}_{m,s}^{\rm plate}]\,,
\end{align}
where $W_{\mathrm{shell}}(X)=\mu\norm{\sym X}^2+\mu_c\norm{\skew X}^2+\frac{\lambda\mu}{\lambda+2\mu}[\tr(X)]^2.$
Let us denote by $\overline{R}_i$ the columns of the matrix $\overline{R}$, i.e., $\overline{R}=\big(\overline{R}_1\,|\,\overline{R}_2\,|\,\overline{R}_3\big)$, $\overline{R}_i=\overline{R}\, e_i$.
Since $(\id_3-e_3\otimes e_3)\overline{R}^{T}=\big(\overline{R}_1\,|\,\overline{R}_2\,|\,0\big)^T$, it follows that $ [\mathcal{E}_{m,s}^{\rm plate}]^\parallel=\big(\overline{R}_1\,|\,\overline{R}_2\,|\,0\big)^T(\nabla m|0)-\id_2^\flat=\big(\big(\overline{R}_1\,|\,\overline{R}_2\big)^T\,\nabla m\big)^\flat-\id_2^\flat$, while
\begin{align}
[\mathcal{E}_{m,s}^{\rm plate}]^\perp=(0\,|\,0\,|\,\overline{R}_3)^T(\nabla m|0)=\begin{footnotesize}
\begin{pmatrix}
0&0&0\\
0&0&0\\
\langle \overline{R}_3,\partial_{x_1}m\rangle&\langle \overline{R}_3,\partial_{x_2}m\rangle&0
\end{pmatrix}\,.
\end{footnotesize}
\end{align}

 Hence, in the Cosserat flat shell  model we have
 \begin{align}
  {W}_{\rm mp}^{\rm hom}
 (\mathcal{E}_{m,s}^{\rm plate})&=\mu\,\lVert \sym \big(\big(\overline{R}_1\,|\,\overline{R}_2\big)^T\,\nabla m-\id_2\big)\rVert^2+
 \mu_{\rm c}\lVert \skew \big(\big(\overline{R}_1\,|\,\overline{R}_2\big)^T\,\nabla m-\id_2\big)\rVert^2
\notag\\
&\quad  +
\frac{\lambda\,\mu\,}{\lambda+2\,\mu\,}[{\rm tr} \big(\big(\overline{R}_1\,|\,\overline{R}_2\big)^T\,\nabla m-\id_2\big)]^2+
\frac{2\,\mu\, \,  \mu_{\rm c}}{\mu_c\,+\mu\,}( \langle\overline{R}_3,\partial_{x_1}m\rangle^2+\langle\overline{R}_3,\partial_{x_2}m\rangle^2 )\,,
 \end{align}
 which agrees with the $\Gamma$-limit found in \cite{neff2007geometrically1}.

 \subsection{A comparison with the nonlinear derivation Cosserat shell model}
 
 In   \cite{GhibaNeffPartI}, under  assumptions \eqref{ch5in} upon the thickness by using the derivation approach, 
 the authors have obtained the following two-dimensional minimization problem   for the
 deformation of the midsurface $m:\omega
 \,{\to}\,
 \mathbb{R}^3$ and the microrotation of the shell
 $\overline{Q}_{e,s}:\omega
 \,{\to}\,
 \textrm{SO}(3)$ solving on $\omega
 \,\subset\mathbb{R}^2
 $: minimize with respect to $ (m,\overline{Q}_{e,s}) $ the  functional
 \begin{equation}\label{e89}
 I(m,\overline{Q}_{e,s})\!=\!\! \int_{\omega}   \!\!\Big[  \,
 W_{\mathrm{memb}}\big(  \mathcal{E}_{m,s}  \big) +W_{\mathrm{memb,bend}}\big(  \mathcal{E}_{m,s} ,\,  \mathcal{K}_{e,s} \big)   +
 W_{\mathrm{bend,curv}}\big(  \mathcal{K}_{e,s}    \big)
 \Big] \,\underbrace{{\rm det}(\nabla y_0|n_0)}_{{\rm det}\nabla\Theta \,}       \, d \omega \,,
 \end{equation}
 where the  membrane part $\,W_{\mathrm{memb}}\big(  \mathcal{E}_{m,s} \big) \,$, the membrane--bending part $\,W_{\mathrm{memb,bend}}\big(  \mathcal{E}_{m,s} ,\,  \mathcal{K}_{e,s} \big) \,$ and the bending--curvature part $\,W_{\mathrm{bend,curv}}\big(  \mathcal{K}_{e,s}    \big)\,$ of the shell energy density are given by
 \begin{align}\label{e90}
 W_{\mathrm{memb}}\big(  \mathcal{E}_{m,s} \big)=& \, \Big(h+{\rm K}\,\dfrac{h^3}{12}\Big)\,
 W_{\mathrm{shell}}\big(    \mathcal{E}_{m,s} \big),\vspace{2.5mm}\notag\\    
 W_{\mathrm{memb,bend}}\big(  \mathcal{E}_{m,s} ,\,  \mathcal{K}_{e,s} \big)=& \,   \Big(\dfrac{h^3}{12}\,-{\rm K}\,\dfrac{h^5}{80}\Big)\,
 W_{\mathrm{shell}}  \big(   \mathcal{E}_{m,s} \, {\rm B}_{y_0} +   {\rm C}_{y_0} \mathcal{K}_{e,s} \big)  \\&
 -\dfrac{h^3}{3} \mathrm{ H}\,\mathcal{W}_{\mathrm{shell}}  \big(  \mathcal{E}_{m,s} ,
 \mathcal{E}_{m,s}{\rm B}_{y_0}+{\rm C}_{y_0}\, \mathcal{K}_{e,s} \big)+
 \dfrac{h^3}{6}\, \mathcal{W}_{\mathrm{shell}}  \big(  \mathcal{E}_{m,s} ,
 ( \mathcal{E}_{m,s}{\rm B}_{y_0}+{\rm C}_{y_0}\, \mathcal{K}_{e,s}){\rm B}_{y_0} \big)\vspace{2.5mm}\notag\\&+ \,\dfrac{h^5}{80}\,\,
 W_{\mathrm{mp}} \big((  \mathcal{E}_{m,s} \, {\rm B}_{y_0} +  {\rm C}_{y_0} \mathcal{K}_{e,s} )   {\rm B}_{y_0} \,\big),  \vspace{2.5mm}\notag\\
 W_{\mathrm{bend,curv}}\big(  \mathcal{K}_{e,s}    \big) = &\,  \,\Big(h-{\rm K}\,\dfrac{h^3}{12}\Big)\,
 W_{\mathrm{curv}}\big(  \mathcal{K}_{e,s} \big)    +  \Big(\dfrac{h^3}{12}\,-{\rm K}\,\dfrac{h^5}{80}\Big)\,
 W_{\mathrm{curv}}\big(  \mathcal{K}_{e,s}   {\rm B}_{y_0} \,  \big)  + \,\dfrac{h^5}{80}\,\,
 W_{\mathrm{curv}}\big(  \mathcal{K}_{e,s}   {\rm B}_{y_0}^2  \big),\notag
 \end{align}
 where
 \begin{align}\label{quadraticforms}
 \nonumber W_{\mathrm{shell}}( X) & =   \mu\,\lVert  \mathrm{sym}\,X\rVert ^2 +  \mu_{\rm c}\lVert \mathrm{skew}\,X\rVert ^2 +\dfrac{\lambda\,\mu\,}{\lambda+2\,\mu\,}\,\big[ \mathrm{tr}   (X)\big]^2\,,\\
 &=  \mu\, \lVert  \mathrm{  dev \,sym} \,X\rVert ^2  +  \mu_{\rm c} \lVert  \mathrm{skew}   \,X\rVert ^2 +\,\dfrac{2\,\mu\,(2\,\lambda+\mu\,)}{3(\lambda+2\,\mu\,)}\,[\mathrm{tr}  (X)]^2\,,\\
 \mathcal{W}_{\mathrm{shell}}(  X,  Y)& =   \mu\,\bigl\langle  \mathrm{sym}\,X,\,\mathrm{sym}\,   \,Y \bigr\rangle   +  \mu_{\rm c}\bigl\langle \mathrm{skew}\,X,\,\mathrm{skew}\,   \,Y \bigr\rangle   +\,\dfrac{\lambda\,\mu\,}{\lambda+2\,\mu\,}\,\mathrm{tr}   (X)\,\mathrm{tr}   (Y),  \notag\vspace{2.5mm}\\
 W_{\mathrm{mp}}(  X)&= \mu\,\lVert \mathrm{sym}\,X\rVert ^2+  \mu_{\rm c}\lVert \mathrm{skew}\,X\rVert ^2 +\,\dfrac{\lambda}{2}\,\big[  \tr(X)\,\big]^2=
 \mathcal{W}_{\mathrm{shell}}(  X)+ \,\dfrac{\lambda^2}{2\,(\lambda+2\,\mu\,)}\,[\mathrm{tr} (X)]^2,\notag\vspace{2.5mm}\\
 W_{\mathrm{curv}}(  X )&=\mu\, L_{\rm c}^2 \left( b_1\,\lVert  \dev\,\text{sym} \,X\rVert ^2+b_2\,\lVert \text{skew}\,X\rVert ^2+4b_3\,
 [\tr (X)]^2\right), \quad \forall\, X,Y\in \mathbb{R}^{3\times 3}\,.\notag
 \end{align}
 
 In the formulation of the minimization problem, the {\it Weingarten map (or shape operator)} is defined by 
 $
 {\rm L}_{y_0}\,=\, {\rm I}_{y_0}^{-1} {\rm II}_{y_0}\in \mathbb{R}^{2\times 2},
 $
 where ${\rm I}_{y_0}:=[{\nabla  y_0}]^T\,{\nabla  y_0}\in \mathbb{R}^{2\times 2}$ and  ${\rm II}_{y_0}:\,=\,-[{\nabla  y_0}]^T\,{\nabla  n_0}\in \mathbb{R}^{2\times 2}$ are  the matrix representations of the {\it first fundamental form (metric)} and the  {\it  second fundamental form} of the surface, respectively.  
% Then, the {\it Gau{\ss} curvature} ${\rm K}$ of the surface  is determined by
% $
% {\rm K} :=\,{\rm det}{({\rm L}_{y_0})}\, 
% $
% and the {\it mean curvature} $\,{\rm H}\,$ through
% $
% 2\,{\rm H}\, :={\rm tr}({{\rm L}_{y_0}}).$  
In that paper, the authors have also introduced the tensors defined by
 \begin{align}\label{AB}
 {\rm A}_{y_0}&:=(\nabla y_0|0)\,\,[\nabla\Theta_x(0) \,]^{-1}\in\mathbb{R}^{3\times 3}, \qquad \qquad 
 {\rm B}_{y_0}:=-(\nabla n_0|0)\,\,[\nabla\Theta_x(0) \,]^{-1}\in\mathbb{R}^{3\times 3},
 \end{align}
 and the so-called \textit{{alternator tensor}} ${\rm C}_{y_0}$ of the surface \cite{Zhilin06}
 \begin{align}
 {\rm C}_{y_0}:=\det(\nabla\Theta_x(0) \,)\, [	\nabla\Theta_x(0) \,]^{-T}\,\begin{footnotesize}\begin{pmatrix}
 0&1&0 \\
 -1&0&0 \\
 0&0&0
 \end{pmatrix}\end{footnotesize}\,  [	\nabla\Theta_x(0) \,]^{-1}.
 \end{align}
 
 Comparing with the $\Gamma$-limit obtained in the present paper,  the internal energy density obtained via the derivation approach depends also on 
 \begin{align}\label{EG}
 \mathcal{E}_{m,s} {\rm B}_{y_0}  + \mathrm{C}_{y_0} \mathcal{K}_{e,s} 
 	= &\, -[\nabla\Theta_x(0) \,]^{-T}
 	\begin{footnotesize}\left( \begin{array}{c|c}
 			\mathcal{R}-\mathcal{G} \,{\rm L}_{y_0} & 0 \vspace{4pt}\\
 			\mathcal{T} \,{\rm L}_{y_0} & 0
 		\end{array} \right)\end{footnotesize} [\nabla\Theta_x(0) \,]^{-1},
 	\end{align}
 where
 the nonsymmetric quantity $ \mathcal{R}-\mathcal{G} \,{\rm L}_{y_0}$ represents {\it the change of curvature tensor}. The choice of this name is justified subsequently in the framework of the linearized theory, see \cite{GhibaNeffPartIV,GhibaNeffPartV}. Let us notice that the elastic shell bending--curvature tensor $
\mathcal{K}_{e,s}$ appearing in the Cosserat $\Gamma$-limit is not capable to measure the change of curvature, see \cite{GhibaNeffPartIII,GhibaNeffPartIV,GhibaNeffPartV,GhibaNeffPartVI}, and that sometimes a confusion is made between bending and change of curvature measures, see also \cite{acharya2000nonlinear,vsilhavycurvature,anicic1999formulation,anicic2002mesure,anicic2003shell}
 
If we ignore the effect of the change of curvature tensor \eqref{EG} {in the model obtained  via the derivation approach,  there exists no coupling terms in $\mathcal{E}_{m,s}$ and $\mathcal{K}_{e,s}$ and we obtain a particular form of the energy,} i.e.,
\begin{align}\label{epart}
W_{\rm our}\big(  \mathcal{E}_{m,s},\mathcal{K}_{e,s}  \big)=\,&  \Big(h+{\rm K}\,\dfrac{h^3}{12}\Big)\,
W_{\mathrm{shell}}\big(    \mathcal{E}_{m,s} \big) + \Big(h-{\rm K}\,\dfrac{h^3}{12}\Big)\,
W_{\mathrm{curv}}\big(  \mathcal{K}_{e,s} \big),
\end{align}
where
\begin{align}\label{expW}
W_{\mathrm{shell}}\big(    \mathcal{E}_{m,s} \big)=&\, \mu\,\lVert  \mathrm{sym}\,   \,\mathcal{E}_{m,s}^{\parallel}\rVert^2 +  \mu_{\rm c}\,\lVert\mathrm{skew}\,   \,\mathcal{E}_{m,s}^{\parallel}\rVert^2 +\,\dfrac{\lambda\,\mu\,}{\lambda+2\,\mu\,}\,\big[ \mathrm{tr}     (\mathcal{E}_{m,s}^{\parallel})\big]^2+\frac{\mu\, +  \mu_{\rm c}}{2}\,\lVert \mathcal{E}_{m,s}^{\perp}\rVert^2\\
=&\, \mu\,\lVert  \mathrm{sym}\,   \,\mathcal{E}_{m,s}^{\parallel}\rVert^2 +  \mu_{\rm c}\,\lVert \mathrm{skew}\,   \,\mathcal{E}_{m,s}^{\parallel}\rVert^2 +\,\dfrac{\lambda\,\mu\,}{\lambda+2\,\mu\,}\,\big[ \mathrm{tr}    (\mathcal{E}_{m,s}^{\parallel})\big]^2+\frac{\mu\, +  \mu_{\rm c}}{2}\,\lVert \mathcal{E}_{m,s}^T\,n_0\rVert^2,\notag
\end{align}
and
\begin{align}\label{curvderi}
W_{\mathrm{curv}}( \mathcal{K}_{e,s} )=&\,\mu\, {L}_{\rm c}^2 \left( b_1\,\lVert  \text{sym} \,\mathcal{K}_{e,s}^{\parallel}\rVert^2+b_2\,\lVert \text{skew}\,\mathcal{K}_{e,s}^{\parallel}\rVert^2\
+\frac{12\,b_3-b_1}{3}\,
[\tr(\mathcal{K}_{e,s}^\parallel)]^2+ \frac{ b_1+b_2}{2}\,\lVert \mathcal{K}_{e,s}^\perp\rVert^2\right).
\end{align}

Skipping now all bending related $h^3$-terms we note that there is only one difference between the membrane  energy obtained via the derivation approach and the membrane energy obtained via $\Gamma$-convergence, i.e., the weight of the energy term $\lVert \mathcal{E}_{m,s}^T\,n_0\rVert^2$:
\begin{itemize}
	\item derivation approach: the algebraic mean of $\mu\,$ and $\mu_{\rm c}$, i.e., $\displaystyle\frac{\mu\, +  \mu_{\rm c}}{2}\,$;
		\item $\Gamma$-convergence: \quad \quad \ \ the harmonic mean of $\mu\,$ and $\mu_{\rm c}$, i.e., $\displaystyle\frac{2\,\mu\,   \mu_{\rm c}}{\mu\,+\mu_{\rm c}}\,$.
\end{itemize}
This difference has already been observed for the Cosserat flat shell $\Gamma$-limit \cite{neff2010reissner}.

We recall again the obtained curvature energy in \cite{Ghiba2022} as 
\begin{align}\label{curgam}
W_{\text{curv}}^{\text{hom}}(\mathcal{K}_{e,s})=\mu L_c^2\Big(b_1\norm{\sym\mathcal{K}_{e,s}^\parallel}^2+b_2\norm{\skew \mathcal{K}_{e,s}^\parallel}^2+\frac{b_1b_3}{(b_1+b_3)}\tr(\mathcal{K}_{e,s}^\parallel)^2+\frac{2\,b_1b_2}{b_1+b_2}\norm{\mathcal{K}_{e,s}^\perp}^2\Big)\,.
\end{align}
A comparison between (\ref{curvderi}) and (\ref{curgam}) shows that, like in the case for the membrane part, the weight of the energy term $\norm{\mathcal{K}_{e,s}^\perp}^2=\norm{\mathcal{K}_{e,s}^T n_0}^2
$ are different as following
\begin{itemize}
	\item derivation approach: the algebraic mean of $b_1$ and $b_2$, i.e., $\displaystyle\frac{b_1 +  b_2}{2}\,$;
	\item $\Gamma$-convergence: \quad \quad \ \ the harmonic mean of $b_1$ and $b_2$, i.e., $\displaystyle\frac{2\,b_1   b_2}{b_1+b_2}\,$.
\end{itemize}

  In the model obtained via the derivation approach \cite{GhibaNeffPartI}, the constitutive coefficients in the shell model depend on both the  Gau\ss \ curvature ${\rm K}$ and the mean curvature ${\rm H}$. In the approach presented in the current paper this does not occur. However, we will consider this aspect in forthcoming works, by considering the $\Gamma$-limit method in order to obtain higher order terms in terms of the thickness in the membrane energy, see \cite{friesecke2003derivation, friesecke2002foppl,friesecke2002theorem, friesecke2006hierarchy}.

\subsection{A comparison with the general 6-parameter shell model}
In the resultant 6-parameter theory of shells, the strain energy density for isotropic shells has been presented in various forms. The simplest expression $W_{\rm P}(\mathcal{E}_{m,s} ,\mathcal{K}_{e,s})$ has been proposed in the papers \cite{Pietraszkiewicz-book04,Pietraszkiewicz10} in the form
\begin{align}\label{56}
2\,W_{\rm P}(\mathcal{E}_{m,s} ,\mathcal{K}_{e,s})=& \;\quad C\big[\,\nu \,(\mathrm{tr}\, \mathcal{E}_{m,s}^{\parallel})^2 +(1-\nu)\, \mathrm{tr}((\mathcal{E}_{m,s}^{\parallel} )^T \mathcal{E}_{m,s}^{\parallel} )\big]  + \alpha_{s\,}C(1-\nu) \, \lVert  \mathcal{E}_{m,s}^T  {n}_0\rVert^2\notag \\
&+\,D \big[\,\nu\,(\mathrm{tr}\, \mathcal{K}_{e,s}^{\parallel})^2 + (1-\nu)\, \mathrm{tr}((\mathcal{K}_{e,s}^{\parallel} )^T \mathcal{K}_{e,s}^{\parallel} )\big]  + \alpha_{t\,}D(1-\nu) \,     \lVert \mathcal{K}_{e,s}^T  {n}_0\rVert^2\,,
  \end{align}
with the Poisson ratio $\nu=\frac{\lambda}{2(\mu+\lambda)}$.

In \cite{Eremeyev06}, Eremeyev and Pietraszkiewicz have proposed a more general form of the strain energy density, namely
\begin{align}\label{58}
2\, W_{\rm EP}(\mathcal{E}_{m,s} ,\mathcal{K}_{e,s})=& \quad \alpha_1\,\big(\mathrm{tr} \, \mathcal{E}_{m,s}^{\parallel}\big)^2 +\alpha_2 \, \mathrm{tr} \big(\mathcal{E}_{m,s}^{\parallel}\big)^2    + \alpha_3 \,\mathrm{tr}\big((\mathcal{E}_{m,s}^{\parallel})^T \mathcal{E}_{m,s}^{\parallel} \big)  + \alpha_4  \,   \lVert  \mathcal{E}_{m,s}^T  {n}_0\rVert^2 \notag\\
& + \beta_1\,\big(\mathrm{tr}\,  \mathcal{K}_{e,s}^{\parallel}\big)^2 +\beta_2\,  \mathrm{tr} \big(\mathcal{K}_{e,s}^{\parallel}\big)^2    + \beta_3\, \mathrm{tr}\big((\mathcal{K}_{e,s}^{\parallel})^T \mathcal{K}_{e,s}^{\parallel} \big)  + \beta_4 \,    \lVert  \mathcal{K}_{e,s}^T  {n}_0\rVert^2.
\end{align}
Already, note the absence of coupling terms involving $\mathcal{K}_{e,s}^{\parallel}$ and $\mathcal{E}_{m,s}^{\parallel}$. The eight coefficients $\alpha_k\,$, $\beta_k$ ($k=1,2,3,4$) can depend in general on the structure of the curvature tensor  $\mathcal{K}^0\,=\, Q_0 ( \, \text{axl}(Q_{0}^T\partial_{x_1} Q_{0})  \, | \,  \text{axl}(Q_{0}^T\partial_{x_2} Q_{0}) \, |\, \,0 \, )[\nabla \Theta(0)]^{-1}$ of the curved reference configuration. We can decompose the strain energy density  \eqref{58}  in the  in-plane part  \linebreak $W_{\rm \text{plane}-EP}(\mathcal{E}_{m,s})$ and the curvature part $W_{\rm \text{curv}-EP}(\mathcal{K}_{e,s})$ and write their expressions  in  the form
\begin{align}\label{59}
W_{\rm EP}(\mathcal{E}_{m,s} ,\mathcal{K}_{e,s})=& \,W_{\rm \text{plane}-EP}(\mathcal{E}_{m,s})+ W_{\rm \text{curv}-EP}(\mathcal{K}_{e,s})\,,
\\
2\,W_{\rm \text{plane}-EP}(\mathcal{E}_{m,s})=&\, (\alpha_2\!+\!\alpha_3) \,\lVert  \text{sym}\, \mathcal{E}_{m,s}^{\parallel}\rVert^2\!+ (\alpha_3\!-\!\alpha_2)\,\lVert \text{skew}\, \mathcal{E}_{m,s}^{\parallel}\rVert^2\!+ \alpha_1\,\big(\mathrm{tr} ( \mathcal{E}_{m,s}^{\parallel})\big)^2 +  \alpha_4\,\lVert \mathcal{E}_{m,s}^Tn_0  \rVert^2, \vspace{4pt}\notag\\
2\, W_{\rm \text{curv}-EP}(\mathcal{K}_{e,s})=&\, (\beta_2\!+\!\beta_3) \,\lVert  \text{sym}\, \mathcal{K}_{e,s}^{\parallel}\rVert^2\!+ (\beta_3\!-\!\beta_2)\,\lVert \text{skew}\, \mathcal{K}_{e,s}^{\parallel}\rVert^2\!+ \beta_1\,\big(\mathrm{tr} ( \mathcal{K}_{e,s}^{\parallel})\big)^2
+ \beta_4\,\lVert \mathcal{K}_{e,s}^Tn_0  \rVert^2.\notag
\end{align}

By comparing our membrane energy
\begin{align}\label{hominf_2Drw}
 {W}_{\rm mp}^{\rm hom}
(\mathcal{E}_{m,s})&=
\, \mu\,\lVert  \mathrm{sym}\,   \,\mathcal{E}_{m,s}^{\parallel}\rVert^2 +  \mu_{\rm c}\,\lVert \mathrm{skew}\,   \,\mathcal{E}_{m,s}^{\parallel}\rVert^2 +\,\dfrac{\lambda\,\mu\,}{\lambda+2\,\mu\,}\,\big[ \mathrm{tr}    (\mathcal{E}_{m,s}^{\parallel})\big]^2 +\frac{2\,\mu\, \,  \mu_{\rm c}}{\mu_c\,+\mu\,}\norm{\mathcal{E}_{m,s}^Tn_0}^2\\
&=W_{\mathrm{shell}}\big(   \mathcal{E}_{m,s}^{\parallel} \big)+\frac{4\,\mu\, \,  \mu_{\rm c}}{\mu_c\,+\mu\,}\lVert \mathcal{E}_{m,s}^{\perp}\rVert^2,\notag
\end{align} 
with $W_{\rm EP}\big(  \mathcal{E}_{m,s},\mathcal{K}_{e,s}  \big)$ we deduce the following identification of the constitutive coefficients $\alpha_1\,,...,\alpha_4$
\begin{align}
\alpha_1&=h\,\dfrac{2\,\mu\,\lambda}{2\,\mu\,+\lambda}\,,\quad \qquad\alpha_2=h\,(\mu\,-\mu_{\rm c}),\qquad \alpha_3=h\,(\mu\,+\mu_{\rm c}),\quad \qquad\alpha_4=h\frac{2\,\mu\, \,  \mu_{\rm c}}{\mu\,+\mu_c\,}.\notag
\end{align} 
We observe that 
$
\mu_{\rm c}^{\mathrm{drill}}\,:=\,\alpha_3-\alpha_2\,=\,2\,h\,\mu_{\rm c}\,,
$
which means that the in-plane rotational couple modulus $\,\mu_{\rm c}^{\mathrm{drill}}\,$ of the Cosserat shell model is determined by the Cosserat couple modulus $\,\mu_{\rm c}\,$ of the 3D Cosserat material. {An analogous conclusion is given in \cite{AltenbachEremeyev} where linear deformations
	are considered.}

Now a comparison between our curvature energy 
\begin{align}\label{curgam1}
W_{\text{curv}}^{\text{hom}}(\mathcal{K}_{e,s})=\mu L_c^2\Big(b_1\norm{\sym\mathcal{K}_{e,s}^\parallel}^2+b_2\norm{\skew \mathcal{K}_{e,s}^\parallel}^2+\frac{b_1b_3}{(b_1+b_3)}\tr(\mathcal{K}_{e,s}^\parallel)^2+\frac{2\,b_1b_2}{b_1+b_2}\norm{\mathcal{K}_{e,s}^\perp}^2\Big)\,.
\end{align}
and $W_{\rm \text{curv}-EP}(\mathcal{K}_{e,s})$, leads us to the identification of the constitutive coefficients $\beta_1,\cdots,\beta_4$
\begin{align*}
\beta_1=2\.\mu\. L_c^2\frac{b_1b_3}{b_1+b_3}\,,\quad\qquad \beta_2=\mu\. L_c^2b_1\,,\quad\qquad \beta_3=\mu \.L_c^2(b_1+b_2)\,,\quad\qquad \beta_4=4\.\mu\. L_c^2\frac{b_1b_2}{b_1+b_2}\,.
\end{align*}

\subsection{A comparison to another $O(h^5)$-Cosserat shell model}

	In \cite{birsan2020derivation}, by using a method which extends the reduction procedure from classical elasticity to  the case of Cosserat shells, B\^{\i}rsan has obtained a minimization problem, which for the particular case of a quadratic ansatz for the deformation map and skipping higher order terms is based on the following energy
	\begin{align}
	I(m,\overline{Q}_{e,s})\!=\!\! \int_{\omega}   \!\!\Big[  \,
W_{\mathrm{memb,bend}}^{\rm (quad)}\big(  \mathcal{E}_{m,s} ,\,  \mathcal{K}_{e,s} \big)   +
	W_{\mathrm{bend,curv}}\big(  \mathcal{K}_{e,s}    \big)
	\Big] \,{{\rm det}(\nabla y_0|n_0)}       \, d \omega \,,
	\end{align}
	with $W_{\mathrm{memb,bend}}^{\rm (quad)}\big(  \mathcal{E}_{m,s} ,\,  \mathcal{K}_{e,s} \big)= \, h\,
	W_{\mathrm{Coss}}\big(\mathcal{E}_{m,s} \big)$ and	$W_{\mathrm{bend,curv}}\big(  \mathcal{K}_{e,s}\big) = \,  \,h\,
	W_{\mathrm{curv}}\big(  \mathcal{K}_{e,s} \big)$, where
%	\begin{align}\label{e90b}
%	W_{\mathrm{memb,bend}}^{\rm (quad)}\big(  \mathcal{E}_{m,s} ,\,  \mathcal{K}_{e,s} \big)=& \, \Big(h+{\rm K}\,\dfrac{h^3}{12}\Big)\,
%	W_{\mathrm{Coss}}\big(    \mathcal{E}_{m,s} \big)+
%	\Big(\dfrac{h^3}{12}\,-{\rm K}\,\dfrac{h^5}{80}\Big)\,
%	W_{\mathrm{Coss}}  \big(   \mathcal{E}_{m,s} \, {\rm B}_{y_0} +   {\rm C}_{y_0} \mathcal{K}_{e,s} \big)  \\&
%	-\dfrac{h^3}{12}\,2 \,\mathcal{W}_{\mathrm{Coss}}  \big(  \mathcal{E}_{m,s} ,
%	{\rm C}_{y_0}\, \mathcal{K}_{e,s}(-{\rm B}_{y_0}+2 {\rm H}\, {\rm A}_{y_0}) \big)+ \,\dfrac{h^5}{80}\,\,
%	W_{\mathrm{mp}} \big((  \mathcal{E}_{m,s} \, {\rm B}_{y_0} +  {\rm C}_{y_0} \mathcal{K}_{e,s} )   {\rm B}_{y_0} \,\big),  \vspace{2.5mm}\notag\\
%	W_{\mathrm{bend,curv}}\big(  \mathcal{K}_{e,s}    \big) = &\,  \,\Big(h-{\rm K}\,\dfrac{h^3}{12}\Big)\,
%	W_{\mathrm{curv}}\big(  \mathcal{K}_{e,s} \big)    +  \Big(\dfrac{h^3}{12}\,-{\rm K}\,\dfrac{h^5}{80}\Big)\,
%	W_{\mathrm{curv}}\big(  \mathcal{K}_{e,s}   {\rm B}_{y_0} \,  \big)  + \,\dfrac{h^5}{80}\,\,
%	W_{\mathrm{curv}}\big(  \mathcal{K}_{e,s}   {\rm B}_{y_0}^2  \big),\notag
%	\end{align}
	\begin{align}
	\nonumber W_{\textrm{Coss}}(X)&= \mathcal{W}_{\textrm{Coss}}(X,X)= \mu\,\lVert  \mathrm{sym}   \,X^\parallel\rVert^2 +  \mu_{\rm c}\,\lVert \mathrm{skew}  \,X^\parallel\rVert^2+\,   \frac{2\mu\,\mu_{\rm c}}{\mu\,+\mu_{\rm c}}\,\lVert X^\perp\rVert^2 +\,\dfrac{\lambda\,\mu\,}{\lambda+2\,\mu\,}\,\big[ \mathrm{tr}   (X)\big]^2\,,\\
	\nonumber \mathcal{W}_{\textrm{Coss}}(X,Y)&=  \mu\,\bigl\langle \mathrm{sym}   \,X^\parallel,\mathrm{sym}  Y^\parallel\bigr\rangle  +  \mu_{\rm c}\,\bigl\langle  \mathrm{skew}  \,X^\parallel,\mathrm{skew} Y^\parallel\bigr\rangle +\,   \frac{2\mu\,\mu_{\rm c}}{\mu\,+\mu_{\rm c}}\,\bigl\langle X^\perp,Y^\perp\bigr\rangle  +\,\dfrac{\lambda\,\mu\,}{\lambda+2\,\mu\,}\, \mathrm{tr}   (X)\,\mathrm{tr}   (Y)\,,\\
	W_{\mathrm{mp}}(  X)&= \mu\,\lVert \mathrm{sym}\,X\rVert ^2+  \mu_{\rm c}\lVert \mathrm{skew}\,X\rVert ^2 +\,\dfrac{\lambda}{2}\,\big[  \tr(X)\,\big]^2=
	\mathcal{W}_{\mathrm{shell}}( X, X)+ \,\dfrac{\lambda^2}{2\,(\lambda+2\,\mu\,)}\,[\mathrm{tr} (X)]^2,\notag\vspace{2.5mm}\\
	W_{\mathrm{curv}}(  X )&=\mu\, L_{\rm c}^2 \left( b_1\,\lVert  \dev\,\text{sym} \,X\rVert ^2+b_2\,\lVert \text{skew}\,X\rVert ^2+4\.b_3\,
	[\tr (X)]^2\right), \quad \forall\, X,Y\in \mathbb{R}^{3\times 3}\,.\notag
	\end{align}

	As it can be seen, in the obtained model by B\^{\i}rsan, there are some coupled terms of stress tensor and bending-curvature tensor, too. 
	This is not surprising, since B\^{\i}rsan has obtained the starting example from the model in \cite{GhibaNeffPartI}. The main difference, in comparison to the model obtained in  \cite{GhibaNeffPartI} is that
	\begin{align*}
	W_{\textrm{Coss}}(X)= \mathcal{W}_{\textrm{Coss}}(X,X)= \mu\,\lVert  \mathrm{sym}   \,X^\parallel\rVert^2 +  \mu_{\rm c}\,\lVert \mathrm{skew}  \,X^\parallel\rVert^2+\,   \frac{2\mu\,\mu_{\rm c}}{\mu\,+\mu_{\rm c}}\,\lVert X^\perp\rVert^2 +\,\dfrac{\lambda\,\mu\,}{\lambda+2\,\mu\,}\,\big[ \mathrm{tr}   (X)\big]^2\,,
	\end{align*}
	from \cite{GhibaNeffPartI} is replaced by 
	\begin{align} 
	\mathcal{W}_{\text{Coss}}(X,Y):=W_{\text{shell}}(X^\parallel,Y^\parallel)+\frac{2\,\mu\,\mu_c}{\mu+\mu_c}\bigl\langle X^\perp, Y^\perp\bigr\rangle\,,
	\end{align}
	 for all tensors $\,   X,\,    Y\in \mathbb{R}^{3\times 3}$ of the form $(*|*|0)\cdot[	\nabla_x \Theta(0)]^{-1}$.
	Note that
		\begin{align} 
	\mathcal{W}_{\text{shell}}(X,Y):=W_{\text{shell}}(X^\parallel,Y^\parallel)+\frac{\mu+\mu_c}{2}\bigl\langle X^\perp, Y^\perp\bigr\rangle\,,
	\end{align}
	holds true for all tensors $\,   X,\,    Y\in \mathbb{R}^{3\times 3}$ of the form $(*|*|0)\cdot[	\nabla_x \Theta(0)]^{-1}$.
	Hence, for 
	%all tensors $\,   X,\,    Y\in \mathbb{R}^{3\times 3}$ of the form $(*|*|0)\cdot[	\nabla_x \Theta(0)]^{-1}$, 
	this type of tensors we have
	\begin{align} 
	\mathcal{W}_{\text{Coss}}(X,Y):=\mathcal{W}_{\text{shell}}(X,Y)-\frac{\mu+\mu_c}{2}\bigl\langle X^\perp, Y^\perp\bigr\rangle+\frac{2\,\mu\,\mu_c}{\mu+\mu_c}\bigl\langle X^\perp, Y^\perp\bigr\rangle\,.
	\end{align}
	
	The main point of the comparison presented in this subsection is that the membrane term of order $O(h)$ coincide with the homogenized membrane energy determined by us in the present paper, i.e.,
	\begin{align}
	{W}_{\rm mp}^{\rm hom}
	(\mathcal{E}_{m,s})\equiv {W}_{\text{Coss}}(\mathcal{E}_{m,s}).
	\end{align}
	
	With a small comparison between the obtained membrane energy via $\Gamma$-convergence and the one obtained via the derivation approach model by B\^{\i}rsan, obviously we see that for a $O(h)$-Cosserat shell theory, there is no difference between the coefficients, i.e., \\
	\begin{itemize*}
		\item special derivation approach: the harmonic mean of $\mu$ and $\mu_c$; $\displaystyle \frac{2\mu\.\mu_c}{\mu+\mu_c}$\,,\\
		\item $\Gamma$-limit approach:  \;\;\qquad\qquad the harmonic mean of $\mu$ and $\mu_c$; $\displaystyle\frac{2\mu\.\mu_c}{\mu+\mu_c}$\,.\\
	\end{itemize*}

 \section{Linearisation of the $\Gamma$-limit Cosserat membrane shell model}

\subsection{The linearised model}

In this section we develop the linearization  of the $\Gamma$-limit functional for the elastic Cosserat shell model, i.e., for situations of small midsurface deformations and small Cosserat-curvature change. 
Let us consider
\begin{align}
m(x_1,x_2)=y_0(x_1,x_2)+v(x_1,x_2),
\end{align}
where $v:\omega\to \mathbb{R}^3$ is the infinitesimal shell-midsurface displacement. For the rotation tensor $ \overline{Q}_{e,0}\in\rm{SO}(3) $ there exists a skew-symmetric matrix  \begin{align}
\overline{A}_\vartheta:={\rm Anti}(\vartheta_1,\vartheta_2,\vartheta_3):=\begin{footnotesize}
\begin{pmatrix}
0&-\vartheta_3&\vartheta_2\\
\vartheta_3&0&-\vartheta_1\\
-\vartheta_2&\vartheta_1&0
\end{pmatrix}\end{footnotesize}\in \mathfrak{so}(3), \quad \qquad {\rm Anti}:\mathbb{R}^3\to \mathfrak{so}(3),
\end{align}
where $ \vartheta={\rm axl}( \overline{A}_\vartheta) $ denotes the axial vector of $ \overline{A}_\vartheta $, such that $\overline{Q}_{e,0}:=\exp(\overline{A}_\vartheta)\;= \;\sum_{k=0}^{\infty} \frac{1}{k!} \,\overline{A}_\vartheta^k\; = \;\id_3 + \overline{A}_\vartheta+\textrm{h.o.t.}$
The tensor field $\overline{A}_\vartheta$ is   the infinitesimal microrotation. Here, ``h.o.t'' stands for terms of  higher order than linear with respect to $u$ and $\overline{A}_\vartheta$.  

Using these linearisations  of the kinematic variables, we find the linearisations of the strain tensors. Indeed, since
\begin{equation}\label{equ1}
\overline{Q}_{e,0}^T\nabla m-\nabla y_0 = (\id_3 +\overline{A}_\vartheta^T+\textrm{h.o.t.} )(\nabla v + \nabla y_0) -\nabla y_0 
=\nabla v - \overline{A}_\vartheta\nabla y_0+\textrm{h.o.t.},
\end{equation}
we get for the non-symmetric  \textit{shell strain tensor} (which characterises both the in-plane deformation and the transverse shear deformation)
\begin{equation*} 
\mathcal{E}_{m,s} = ( \overline{Q}_{e,0}^T\nabla m-\nabla y_0\; |\; 0)\; [\nabla\Theta \,]^{-1}\,,
\end{equation*}
the linearization
\begin{align}\label{equ2}
\nonumber
\mathcal{E}_{m,s}^{\rm{lin}} = ( \nabla v - \overline{A}_\vartheta\nabla y_0\; |\; 0)\; [\nabla\Theta \,]^{-1}= (\partial_{x_1} u - \vartheta\times a_1\;|\;  \partial_{x_2} u - \vartheta\times a_2 \;|\; 0)\; [\nabla\Theta \,]^{-1}\;\not\in {\rm Sym}(3).
\end{align}
And for the \textit{shell bending-curvature tensor}
\begin{align} \mathcal{K}_{e,s} := \Big(\mbox{axl}(\overline{Q}_{e,0}^T\partial_{x_1}\overline{Q}_{e,0})\,|\, \mbox{axl}(\overline{Q}_{e,0}^T\partial_{x_2}\overline{Q}_{e,0})\,|\, 0 \Big) \; [\nabla\Theta \,]^{-1}\,,\end{align} 
we calculate
\begin{equation} 
\overline{Q}_{e,0}^T\partial_{x_\alpha}\overline{Q}_{e,0}= (\id_3 - \overline{A}_\vartheta )\,\,\partial_{x_\alpha}\overline{A}_\vartheta+\textrm{h.o.t.}
= \partial_{x_\alpha}\overline{A}_\vartheta +\textrm{h.o.t.} = \underbrace{\overline{A}_{\partial_{x_\alpha}\vartheta}}_{\equiv \;{\rm Anti} \,\partial_{x_\alpha}\vartheta\.=\,\partial_{x_\alpha} {\rm Anti} \, \vartheta}+\,\textrm{h.o.t.}\;,
\end{equation}
i.e.,
\begin{equation}\label{equ6}
\mbox{axl}\big(\overline{Q}_{e,0}^T\partial_{x_\alpha}\overline{Q}_{e,0}\big)= \partial_{x_\alpha} \vartheta+\textrm{h.o.t.}\,,
\end{equation}
and we deduce
\begin{equation}\label{equ71}
\mathcal{K}_{e,s}^{\rm{lin}} \,\,= \,\, (\mbox{axl}\big(\partial_{x_1}\overline{A}_\vartheta\big) \,|\, \mbox{axl}\big(\partial_{x_2}\overline{A}_\vartheta\big)\, |\, 0) \; [\nabla\Theta \,]^{-1}\,,
\end{equation}
together with 
\begin{equation}\label{equ7}
\mathcal{K}_{e,s}^{\rm{lin}} \,\,= \,\, (\partial_{x_1}\vartheta \,|\, \partial_{x_2}\vartheta\, |\, 0) \; [\nabla\Theta \,]^{-1} \,\,= \,\, (\nabla\vartheta\, |\, 0) \; [\nabla\Theta \,]^{-1}\;.
\end{equation}
The form of the energy density remains unchanged upon linearization, since the model is physically linear. Thus, 
the linearization of the  $\Gamma$-limits reads: for a midsurface displacement vector field 
$v:\omega\subset\mathbb{R}^2\to\mathbb{R}^3$ and the micro-rotation vector field $\vartheta:\omega\subset\mathbb{R}^2\to\mathbb{R}^3$:
\begin{align}\label{e89l}
\mathcal{J}_0(m,\overline{Q}_{e,0})\!=\!\! \int_{\omega}   h \,\Big[  &
\overline {W}_{\mathrm{\text{mp}}}^{\text{hom}}\big(    \mathcal{E}_{m,s}^{\rm{lin}}  \big)+ \,
\overline{W}_{\mathrm{curv}}^{{\rm hom}}\big(  \mathcal{K}_{e,s}^{\rm{lin}}  \big) \Big]   {\rm det}(\nabla y_0|n_0)  \,   d \omega\notag- {\overline{\Pi}}^{\rm lin}(u,\vartheta)\,,\notag
\end{align}
where
\begin{align}
\nonumber \overline{W}_{\rm mp}^{\rm hom}
(   \mathcal{E}_{m,s}^{\rm{lin}})&=
\, \mu\,\lVert  \mathrm{sym}\,   \,   \mathcal{E}_{m,s}^{\rm{lin},\parallel}\rVert^2 +  \mu_{\rm c}\,\lVert \mathrm{skew}\,   \,\mathcal{E}_{m,s}^{\rm{lin},\parallel}\rVert^2 +\,\dfrac{\lambda\,\mu\,}{\lambda+2\,\mu\,}\,\big[ \mathrm{tr}    (\mathcal{E}_{m,s}^{\rm{lin},\parallel})\big]^2 +\frac{2\,\mu\, \,  \mu_{\rm c}}{\mu_c\,+\mu\,}\norm{\mathcal{E}_{m,s}^{\rm{lin},T}n_0}^2\\
&=W_{\mathrm{shell}}\big( \mathcal{E}_{m,s}^{\rm{lin},\parallel} \big)+\frac{2\,\mu\, \,  \mu_{\rm c}}{\mu_c\,+\mu\,}\lVert\mathcal{E}_{m,s}^{\rm{lin},\perp}\rVert^2,\\ \overline{W}^{\rm hom}_{\rm curv}(\mathcal{K}_{e,s}^{\rm{lin}})
\nonumber&=\mu L_c^2\Big(b_1\norm{\sym\mathcal{K}_{e,s}^{\rm{lin},\parallel}}^2+b_2\norm{\skew \mathcal{K}_{e,s}^{\rm{lin},\parallel}}^2+\frac{b_1b_3}{(b_1+b_3)}\tr(\mathcal{K}_{e,s}^{\rm{lin},\parallel})^2+\frac{2\,b_1b_2}{b_1+b_2}\norm{\mathcal{K}_{e,s}^{\rm{lin},\perp}}\Big)\,,
\end{align}
and $\overline{\Pi}^{\rm lin}(u,\vartheta)$ is the linearization of the continuous external loading potential $\overline{\Pi}$. 

\subsection{Comparison with the linear Reissner-Mindlin membrane-bending model}
The following model
\begin{align}\label{Reissner}
\nonumber\int_\omega h\Big(\mu\,&\norm{\sym\nabla(v_1,v_2)}^2+\frac{\kappa\.\mu\,}{2}\norm{\nabla v_3-\matr{\theta_1\\\theta_2}}^2+\frac{\mu\,\lambda}{2\,\mu\,+\lambda}\tr(\sym\nabla(v_1,v_2))^2\Big)\\&\quad+\frac{h^3}{12}\Big(\mu\,\norm{\sym\nabla(\theta_1,\theta_2)}^2+\frac{\mu\,\lambda}{2\,\mu\,+\lambda}\tr(\nabla(\theta_1,\theta_2))^2\Big)d\omega \rightarrow \min\;\text{w.r.t}. (v,\theta)\,,\\
\nonumber& \quad v|_{\gamma_0}=u^d(x,y,0)\,,\quad-\theta|_{\gamma_0}=(u_{1,z}^d,u_{2,z}^d,0)^T\,,
\end{align}
is the linear Reissner-Mindlin membrane-bending model which has five degree of freedom, three from the midsurface displacement $v\col \omega\subset \R^2\to \R^3$ and the other two are from the out-of-plane rotation parameter $\theta\col \omega \to \R^2$ that describes the infinitesimal increment of the director and $0<\kappa\leq 1$ is the so called \textit{shear correction factor}. In this model the drill rotations (rotations about the normal) are absent. 

As derived in \cite{neff2007geometrically}, the Reissner-Mindlin membrane-bending model can be obtained as $\Gamma$-limit of the linear Cosserat elasticity model. Neff et al. in \cite{neff2010reissner} applied the nonlinear scaling for the displacement  and linear scaling for the infinitesimal microrotation for the minimization problem with respect to $(u,\overline{A})$:
\begin{align}
I(u,\overline{A})=\int_{\Omega_h}W_\text{mp}(\overline{\varepsilon})+W_{\text{curv}}(\nabla\axl\overline{A})\;dV
%-\iprod{f,u}\;dV-\int_{{\partial\Omega_h}^{\hspace{-0.1cm}{\text{trans}}}\cup\{\gamma_s\times [-\frac{h}{2},\frac{h}{2}]\}}\iprod{N,u}\;dS
\mapsto \min \quad\text{w.r.t}\quad (u,\overline{A})\,,
\end{align}
where $\overline{\varepsilon}=\nabla u-\overline{A}$, %$\partial\Omega_h^{\text{trans}}=\omega\times \{-\frac{h}{2},\frac{h}{2}\}$, with boundary condition $u|_{\mathcal{K}_{e,s}^h}=u_d(x,y,z)$ for $\mathcal{K}_{e,s}^h=\gamma_0\times [-\frac{h}{2},\frac{h}{2}]$, $\gamma_0\subset\partial\omega$, $\gamma_s\cap\gamma_0=\emptyset$ 
and
\begin{align}
\nonumber W_{\text{mp}}(\overline{\varepsilon})&=\mu\,\norm{\sym \overline{\varepsilon}}^2+\mu_c\,\norm{\skew \overline{\varepsilon}}^2+\frac{\lambda}{2}[\tr(\overline{\varepsilon})]^2\,,\\
W_{\text{curv}}(\mathcal{A})&=\mu\,\frac{\widehat{L}_c^2(h)}{2}\Big(\alpha_1\norm{\sym\nabla\axl\overline{A}}^2+\alpha_2\norm{\skew \nabla\axl\overline{A}}^2+\frac{\alpha_3}{2}[\tr(\nabla\axl\overline{A})]^2\Big)\,,
\end{align}
for $\alpha_1,\alpha_2,\alpha_3\geq 0$. Then, they obtained the following minimization problem: 
\begin{align}
I^{\text{hom}}(v,\overline{A})=\int_\omega W_{\text{mp}}^{\text{hom}}(\nabla v,\axl \overline{A})+W_{\text{curv}}^{\text{hom}}(\nabla\axl\overline{A})
%-\iprod{f,v}
\;d\omega\,,
\end{align}
with respect to $(v,\theta)$, where $v\col\omega\subset \R^2\to \R^3$ is the deformation of the midsurface and $\overline{A}\col \omega\subset \R^2\to \so(3)$ as the infinitesimal microrotation of the plate on $\omega$ with the boundary condition $v|_{\gamma_0}=u_d(x,y,0), \gamma_0\subset \partial\omega$ and
\begin{align}\label{Neff hom}
\nonumber W_{\text{mp}}^{\text{hom}}(\nabla v,\theta)&:=\mu\,\norm{\sym \nabla_{(\eta_1,\eta_2)}(v_1,v_2)}^2+2\frac{\mu\,\mu_c\,}{\mu\,+\mu_c\,}\norm{\nabla_{(\eta_1,\eta_2)}v_3-\matr{-\theta_2\\\theta_1}}^2+\frac{\mu\,\lambda}{2\,\mu\,+\lambda}\tr[\nabla_{(\eta_1,\eta_2)}(v_1,v_2)]^2\,,\\
W_{\text{curv}}^{\text{hom}}(\nabla\theta)&:=\mu\,\frac{\widehat{L}_c^2(h)}{2}\Big(\alpha_1\norm{\sym\nabla_{(\eta_1,\eta_2)}(\theta_1,\theta_2)}^2+\frac{\alpha_1\alpha_3}{2\alpha_1+\alpha_3}\tr[\nabla_{(\eta_1,\eta_2)}(\theta_1,\theta_2)]^2\Big)\,.
\end{align}

Comparing the Reissner-Mindlin membrane-bending model with the linearisation of the $\Gamma$-model obtained in the present paper,    it can be seen that the Reissner-Mindlin model is obtained by $\Gamma$-convergence, upon selecting $\alpha_1=\mu\,, \alpha_3=\lambda$ in our model and by neglecting the drilling (the third component of the  director).

In this formula one can recognize the harmonic mean $\mathcal{H}$
\begin{align}
\frac{1}{2}\mathcal{H}(\mu\,,\frac{\lambda}{2})=\frac{\mu\,\lambda}{2\,\mu\,+\lambda}\,,\qquad\mathcal{H}(\mu\,,\mu_c\,)=\frac{2\,\mu\,\mu_c\,}{\mu\,+\mu_c\,}\,,\qquad\frac{1}{2}\mathcal{H}(\alpha_1,\frac{\alpha_3}{2})=\frac{\alpha_1\alpha_3}{2\alpha_1+\alpha_3}\,.
\end{align}
In our paper we used the nonlinear scaling for both deformation and microrotation, while in \cite{neff2010reissner}, they applied linear scaling for microrotation and nonlinear scaling for deformation.
The other comparison is regarding the th elastic shell strain tensor and elastic shell bending curvature tensor which in our model are not de-coupled, and in (\ref{Neff hom}) the in-plane deflections $v_1,v_2$ are not decoupled from $\theta_3$ as well. 

\subsection{Aganovic and Neff's flat shell model}\label{AgaNeff}
Aganovi\'{c} et al.\cite{aganovic2007} proposed a linear Cosserat flat shell model based on asymptotic analysis of the linear isotropic micropolar Cosserat model. They used the nonlinear scaling for both the displacement and infinitesimal microrotations. Therefore, their minimization problem reads:
\begin{align}
\nonumber\int_\omega h&\Big(\mu\,\norm{\sym\big(\nabla(v_1,v_2)-\matr{0&-\theta_3\\\theta_3&0}\big)}^2+\mu_c\,\norm{\skew\big(\nabla(v_1,v_2)-\matr{0&-\theta_3\\\theta_3&0}\big)}^2+\footnotesize\frac{2\,\mu\,\mu_c\,}{\mu\,+\mu_c\,}\norm{\nabla v_3-\matr{-\theta_2\\\theta_1}}^2\\
&\quad+\frac{\mu\,\lambda}{2\,\mu\,+\lambda}\tr(\sym\big(\nabla(v_1,v_2)-\matr{0&-\theta_3\\\theta_3&0}\big)^2\Big)\\
\nonumber&\quad+\mu\,\frac{h \,L_c^2}{2}\Big(\alpha_1\norm{\sym\nabla(\theta_1,\theta_2)}^2+\alpha_2\norm{\skew \nabla(\theta_1,\theta_2)}^2+\frac{2\alpha_1\alpha_2}{\alpha_1+\alpha_2}\norm{\nabla\theta_3}^2+\frac{\alpha_1\alpha_3}{2\alpha_1+\alpha_3}\tr(\nabla(\theta_1,\theta_2))^2\Big)\,d\omega\\
\nonumber&\quad
%-\iprod{f,v}
\rightarrow \min\;\text{w.r.t}. (v,\theta)\,, %\qquad\quad v|_{\gamma_0}=u^d(x,y,0)\,,
\end{align}
where it is assumed that $\alpha_2,\kappa>0$, otherwise this model with the assumption $\alpha_2=0$ will give the Reissner-Mindlin model. This means that we can not ignore the in-plane drill component $\theta_3$ here and in the case of $\alpha_2>0$ one does not obtain the Reissner-Mindlin model.
The asymptotic model coincides with the assumptions of Neff et al. in \cite{neff2007geometrically1}, where their assumption was about scaling the nonlinear Cosserat plate model with nonlinear scaling for both deformation and microrotation. The membrane part of this energy coincides with the homogenized membrane energy of our model with the same coefficients.

 \section{Conclusion}
 In this paper we have considered the $\Gamma$-limit procedure in order to derive a Cosserat thin shell model having a curved reference configuration. The paper is based on the development in \cite{neff2007geometrically1}, where the $\Gamma$-limit was obtained for a flat reference configuration of the shell. Here, the major complication arises from the curvy shell reference configuration. By introducing suitable mappings, we can encode the "curvy" information on a fictitious flat reference configuration. There, we use the nonlinear scaling for both the nonlinear deformation and the microrotation. This leads to a Cosserat membrane model, in which the effect of Cosserat-curvature survive the $\Gamma$-limit procedure. The homogenized membrane and curvature energy expressions are made explicit after some lengthy technical calculations. This is only possible because we use a physically linear, isotropic Cosserat model. Since the limit equations are obtained by $\Gamma$- convergence, they are automatically well-posed. We finally compare the Cosserat membrane shell model with some other dimensionally reduced proposals and linearizations. The full regularity of weak solutions for this Cosserat shell model (for some choice of constitutive parameters) will be established in \cite{Gastel2022}.
 \bigskip
 
 %%%%%%%%%%
 \footnotesize
 \begin{footnotesize}
	\noindent{\bf Acknowledgements:}   This research has been funded by the Deutsche Forschungsgemeinschaft (DFG, German Research Foundation) -- Project no. 415894848: NE 902/8-1 (P. Neff and M. Mohammadi Saem).
%The  work of I.D. Ghiba  was supported by a grant of the Romanian Ministry of Research
% 	and Innovation, CNCS--UEFISCDI, project number
% 	PN-III-P4-PCE-2021-0616, within PNCDI III. 
 \end{footnotesize}
 \section*{References}
\printbibliography[heading=none]
 	\addcontentsline{toc}{section}{References}
\begin{appendix}
	\section{Appendix}
	\subsection{An auxiliary optimization problem}\label{Calculhom} 
	In this section we solve the auxiliary optimization problem (\ref{hom inf}).   We calculate the variation of the energy (\ref{hom inf}) at equilibrium to be minimized over $c\in \mathbb{R}^3$ in order to   determine the minimizer $d^*$. 
	For arbitrary increment $\delta d^*\in \mathbb{R}^3$, we have 
	\begin{align}
	\forall\;\; \delta d^*\in \R^3: \quad\bigl\langle \D {W}_{\rm mp}(\overline{Q}_{e}^{\natural,T}(\nabla_{(\eta_1,\eta_2)} \varphi^\natural|d^*)[(\nabla_x\Theta)^\natural ]^{-1}), \overline{Q}_{e}^{\natural,T}(0|0|\delta d^*)[(\nabla_x\Theta)^\natural ]^{-1}\bigr\rangle&=0.
	\end{align}

	By applying $\D {W}_{\rm mp}$ we obtain%Therefore, for any vector $\delta d^*\in \mathbb{R}^3$
	\begin{align}
	\nonumber&\bigl\langle 2\,\mu\,\Big(\sym (\overline{Q}_{e}^{\natural,T}(\nabla_{(\eta_1,\eta_2)} \varphi^\natural|d^*)[(\nabla_x\Theta)^\natural ]^{-1}-\id_3)\Big), \overline{Q}_{e}^{\natural,T}(0|0|\delta d^*)[(\nabla_x\Theta)^\natural ]^{-1}\bigr\rangle_{\R^{3\times 3}}\\
	&\quad+\bigl\langle 2\,\mu_c\,\Big(\skew(\overline{Q}_{e}^{\natural,T}(\nabla_{(\eta_1,\eta_2)} \varphi^\natural|d^*)[(\nabla_x\Theta)^\natural ]^{-1})\Big), \overline{Q}_{e}^{\natural,T}(0|0|\delta d^*)[(\nabla_x\Theta)^\natural ]^{-1}\bigr\rangle_{\R^{3\times 3}}\\
	&\quad+\lambda\tr\Big(\sym(\overline{Q}_{e}^{\natural,T}(\nabla_{(\eta_1,\eta_2)} \varphi^\natural|d^*)[(\nabla_x\Theta)^\natural ]^{-1}-\id_3)\Big)\, \iprod{\id_3, \overline{Q}_{e}^{\natural,T}(0|0|\delta d^*)[(\nabla_x\Theta)^\natural ]^{-1}}_{\R^{3\times 3}}=0.\nonumber 
	\end{align}
	This is equivalent to
	\begin{align}
	&\bigl\langle 2\,\mu\,\overline{Q}_{e}^\natural\Big(\sym (\overline{Q}_{e}^{\natural,T}(\nabla_{(\eta_1,\eta_2)} \varphi^\natural|d^*)[(\nabla_x\Theta)^\natural ]^{-1}-\id_3)\Big)[(\nabla_x\Theta)^\natural ]^{-T}e_3, \delta d^*\bigr\rangle_{\R^3}\notag\\
	&\quad\quad+\bigl\langle 2\,\mu_c\,\overline{Q}_{e}^\natural\Big(\skew(\overline{Q}_{e}^{\natural,T}(\nabla_{(\eta_1,\eta_2)} \varphi^\natural|d^*)[(\nabla_x\Theta)^\natural ]^{-1})\Big)[(\nabla _x\Theta)^\natural]^{-T}e_3,\delta d^*\bigr\rangle_{\R^3}\\
	&\quad\quad+\lambda\tr\Big(\sym(\overline{Q}_{e}^{\natural,T}(\nabla_{(\eta_1,\eta_2)} \varphi^\natural|d^*)[(\nabla_x\Theta)^\natural ]^{-1}-\id_3)\Big)\,\iprod{\overline{Q}_{e}^\natural[(\nabla_x\Theta)^\natural ]^{-T}e_3, \delta d^*}_{\R^3}=0\,,\notag
	\end{align}
	and it gives
	\begin{align}
	\nonumber & \bigl\langle 2\,\mu\,\overline{Q}_{e}^\natural\Big(\sym (\overline{Q}_{e}^{\natural,T}(\nabla_{(\eta_1,\eta_2)} \varphi^\natural|d^*)[(\nabla_x\Theta)^\natural ]^{-1}-\id_3)\Big)n_0, \delta d^*\bigr\rangle_{\R^3}\\
	&\qquad+\dyniprod{2\,\mu_c\,\overline{Q}_{e}^\natural\Big(\skew(\overline{Q}_{e}^{\natural,T}(\nabla_{(\eta_1,\eta_2)} \varphi^\natural|d^*)[(\nabla_x\Theta)^\natural ]^{-1})\Big)n_0,\delta d^*}_{\R^3}\\
	&\qquad+\lambda\tr\Big(\sym(\overline{Q}_{e}^{\natural,T}(\nabla_{(\eta_1,\eta_2)} \varphi^\natural|d^*)[(\nabla_x\Theta)^\natural ]^{-1}-\id_3)\Big)\iprod{\overline{Q}_{e}^\natural n_0, \delta d^*}_{\R^3}=0.\notag
	\end{align}
	Recall that the \textit{first Piola--Kirchhoff stress tensor} in the reference configuration $\Omega_\xi$ is given by $S_1(F_\xi,\overline{R}_\xi):=\D_{F_\xi} {W}_{\rm mp}(F_\xi,\overline{R}_\xi)$, while
	the $\textit{Biot-type stress tensor}$ is   $T_{\text{Biot}}(\overline{U}_\xi):=\D_{\overline{U}_\xi}W_{\rm mp}(\overline{U}_\xi)$. Since $\D_{F_\xi}\overline{U}_\xi \..\.X=\overline{R}^T_\xi X$ and
	$$\iprod{\D_{F_\xi} {W}_{\rm mp}(F_\xi,\overline{R}_\xi),X}=\iprod{\D_{\overline{U}_\xi}W_{\rm mp}(\overline{U}_\xi),\D_{F_{\xi}} \overline{U}_{\xi} X}\.,\ \forall X\in \mathbb{R}^{3\times 3}\,,$$
	we obtain 
	\begin{align}
	\D_{F_{\xi}} {W}_{\rm mp}(F_{\xi},\overline{R}_{\xi})=\overline{R}_{\xi}\,\D_{\overline{U}_{\xi}}W_{\rm mp}(\overline{U}_{\xi})\,.
	\end{align} Therefore, $S_1(F_\xi,\overline{R}_\xi)=\overline{R}_\xi\, T_{\text{Biot}}(\overline{U}_\xi)$ and $ T_{\text{Biot}}(\overline{U}_\xi)=\overline{R}^T_\xi\, S_1(F_\xi,\overline{R}_\xi)$. 
	Here, we have
	\begin{align}
	T_{\text{Biot}}(\overline{U}_\xi)=2\,\mu\,\sym(\overline{U}_\xi-\id_3)+2\,\mu_c\,\skew
	(\overline{U}_\xi-\id_3)+\lambda\tr(\sym(\overline{U}_\xi-\id_3))\id_3\,,
	\end{align}
	where $\overline{U}_{\xi}(\Theta(x_1,x_2,x_3))=\UE(x_1,x_2,x_3)$.
	Thus, we can express the first Piola Kirchhoff stress tensor 
	\begin{align}
	S_1(F_\xi,\overline{R}_\xi)=\overline{R}_\xi\Big[2\,\mu\,\sym(\overline{R}_\xi^T F_\xi-\id_3)+2\,\mu_c\,\skew
	(\overline{R}_\xi^T F_\xi-\id_3)+\lambda\tr(\sym(\overline{R}_\xi^T F_\xi-\id_3))\id_3\Big]\.,
	\end{align}
	with $\overline{R}_\xi(\Theta(x_1,x_2,x_3))=\overline{Q}_e(x_1,x_2,x_3)$ for the elastic microrotation $\overline{Q}_e\col\Omega_h\rightarrow \text{SO(3)}$.
	Hence, we must have
	\begin{align}
	\forall \delta d^*\in \R^3\col \qquad\iprod{S_1((\nabla_{(\eta_1,\eta_2)} \varphi^\natural|d^*)&[(\nabla_x\Theta)^\natural ]^{-1},\overline{Q}_{e}^\natural)n_0,\delta d^*}_{\R^3}=0,
	\end{align}
	implying
	\begin{align}\label{bcot}
	S_1((\nabla_{(\eta_1,\eta_2)} \varphi^\natural|d^*)&[(\nabla_x\Theta)^\natural ]^{-1},\overline{Q}_{e}^\natural)\,n_0=0 \qquad \forall\, \eta_3\in \left[-\frac{1}{2},\frac{1}{2}\right].
	\end{align}
	\\
	In shell theories, the usual assumption is   that the normal stress on the transverse boundaries are vanishing, that is
	\begin{align}\label{zeronor}
	S_1(F_\xi,\overline{R}_\xi)\big|_{\omega_\xi^\pm}\, (\pm n_0)=0\,, \qquad \text{(normal stress on lower and upper faces is zero)}\,.
	\end{align}
	We notice that the condition \eqref{bcot} is for all $\eta_3\in \left[-\frac{1}{2},\frac{1}{2}\right]$, while the condition \eqref{zeronor} is only for $\eta_3=\pm\frac{1}{2}$. Therefore, it is possible that the Cosserat-membrane type $\Gamma$-limit underestimates the real stresses (e.g., the transverse shear stresses).
	From the relation between the first Piola-Kirchhoff tensor and the Biot-stress tensor we obtain 
	\begin{align}\label{multi n0} T_{\text{Biot}}\Big(\overline{Q}_{e}^{\natural,T}(\nabla_{(\eta_1,\eta_2)} \varphi^\natural|d^*)[(\nabla_x\Theta)^\natural ]^{-1}\Big) n_0=0\,,\qquad \forall \,\eta_3\in [-\frac{1}{2},\frac{1}{2}]\,,
	\end{align}
	or, equivalently,  
	\begin{align}\label{multi n00}
	T_{\text{Biot}}(\overline{U}_{\varphi^\natural,\overline{Q}_{e}^{\natural},d^*})\,n_0=0,
	\end{align}
	where
	\begin{align}\label{Tbiot}
	T_{\text{Biot}}(\overline{U}_{\varphi^\natural,\overline{Q}_{e}^{\natural},d^*})=2\,\mu\,\sym(\overline{U}_{\varphi^\natural,\overline{Q}_{e}^{\natural},d^*}-\id_3)+2\,\mu_c\,\skew(\overline{U}_{\varphi^\natural,\overline{Q}_{e}^{\natural},d^*}-\id_3)+\lambda\tr(\sym(\overline{U}_{\varphi^\natural,\overline{Q}_{e}^{\natural},d^*}-\id_3))\id_3\,,
	\end{align}
	and we have introduced the notation $\overline{U}_{\varphi^\natural,\overline{Q}_{e}^{\natural},d^*}:=\overline{Q}_{e}^{\natural,T}(\nabla_{(\eta_1,\eta_2)} \varphi^\natural|d^*)[(\nabla_x\Theta)^\natural ]^{-1}$.
	With the help of the following decomposition
	\begin{align}
	\overline{U}_{\varphi^\natural,\overline{Q}_{e}^{\natural},d^*}-\id_3&=(\overline{Q}_{e}^{\natural,T}\nabla_{(\eta_1,\eta_2)} \varphi^\natural-(\nabla y_0)^\natural |0)[(\nabla_x\Theta)^\natural]^{-1}+(0|0|\overline{Q}_{e}^{\natural,T}d^*-n_0)[(\nabla_x\Theta)^\natural]^{-1}\notag\\&=  \mathcal{E}_{\varphi^\natural,\overline{Q}_{e}^{\natural} } +(0|0|\overline{Q}_{e}^{\natural,T}d^*-n_0)[(\nabla_x\Theta)^\natural]^{-1}\,,
	\end{align}
	with $\mathcal{E}_{\varphi^\natural,\overline{Q}_{e}^{\natural} }=(\overline{Q}_{e}^{\natural,T}\nabla_{(\eta_1,\eta_2)} \varphi^\natural-(\nabla y_0)^\natural |0)[(\nabla_x\Theta)^\natural]^{-1}$, and relations (\ref{Tbiotsym})-(\ref{Tbiottra}), the relation (\ref{Tbiot}) can be expressed as
	\begin{align}
	\nonumber T_{\text{Biot}}(\overline{U}_{\varphi^\natural,\overline{Q}_{e}^{\natural},d^*})n_0&=\mu\,\Big(  \mathcal{E}^T_{\varphi^\natural,\overline{Q}_{e}^{\natural} } n_0+(\overline{Q}_{e}^{\natural,T}d^*-n_0)+[(\nabla_x\Theta)^\natural]^{-T}(0|0|\overline{Q}_{e}^{\natural,T}d^*-n_0)^Tn_0\Big)\\
	\nonumber&\quad +\mu_c\,\Big(-  \mathcal{E}^T_{\varphi^\natural,\overline{Q}_{e}^{\natural} } n_0+(\overline{Q}_{e}^{\natural,T}d^*-n_0)-[(\nabla_x\Theta)^\natural]^{-T}(0|0|\overline{Q}_{e}^{\natural,T}d^*-n_0)^Tn_0\Big)\\
	\nonumber&\quad+\lambda\Big(\iprod{  \mathcal{E}_{\varphi^\natural,\overline{Q}_{e}^{\natural} } ,\id_3}n_0+(\overline{Q}_{e}^{\natural,T}d^*-n_0)n_0\otimes n_0\Big)\\
	\nonumber&= (\mu\,+\mu_c\,)(\overline{Q}_{e}^{\natural,T}d^*-n_0)+(\mu\,-\mu_c\,)  \mathcal{E}^T_{\varphi^\natural,\overline{Q}_{e}^{\natural} } n_0+(\mu\,-\mu_c\,)((0|0|\overline{Q}_{e}^{\natural,T}d^*-n_0)[(\nabla_x\Theta)^\natural]^{-1})^Tn_0\\
	&\qquad+\lambda\tr(  \mathcal{E}_{\varphi^\natural,\overline{Q}_{e}^{\natural} } )n_0+\lambda(\overline{Q}_{e}^{\natural,T}d^*-n_0)n_0\otimes n_0,
	\end{align}
	and the condition \eqref{multi n00} on $T_{\text{Biot}}$ reads
	\begin{align}\label{result}
	\nonumber(\mu\,+\mu_c\,)(\overline{Q}_{e}^{\natural,T}d^*-n_0)+(\mu\,-\mu_c\,)(\overline{Q}_{e}^{\natural,T}d^*-n_0)n_0\otimes n_0&+\lambda(\overline{Q}_{e}^{\natural,T}d^*-n_0)n_0\otimes n_0\\
	&=-\Big[(\mu\,-\mu_c\,)\mathcal{E}_{\varphi^\natural,\overline{Q}_{e}^{\natural} }^Tn_0+\lambda\tr(  \mathcal{E}_{\varphi^\natural,\overline{Q}_{e}^{\natural} } )n_0\Big],
	\end{align}
	where $((0|0|\overline{Q}_{e}^{\natural,T}d^*-n_0)[(\nabla_x\Theta)^\natural]^{-1})^Tn_0=(\overline{Q}_{e}^{\natural,T}d^*-n_0)n_0\otimes n_0$. Before  continuing the calculations, we introduce the tensor
	\begin{align}\label{AB}
	{\rm A}_{y_0}&:=(\nabla y_0|0)\,\,[(\nabla_x\Theta)(0) \,]^{-1}=\id_3-n_0\otimes n_0\in{\rm Sym}(3),
	\end{align}
	and we notice that, identically as in the proof of Lemma 4.3 in \cite{GhibaNeffPartI}, we can show that
	\begin{align}\label{identitatiPI}\mathcal{E}_{\varphi^\natural,\overline{Q}_{e}^{\natural} } {\rm A}_{y_0}\,=\,\mathcal{E}_{\varphi^\natural,\overline{Q}_{e}^{\natural} } \qquad \Longleftrightarrow \qquad \mathcal{E}_{\varphi^\natural,\overline{Q}_{e}^{\natural} } n_0\otimes n_0=0.
	\end{align}
	Actually, for an arbitrary matrix $X\,=\,(*|*|0)\,[	\nabla_x \Theta(0)]^{-1}$, since 
	${\rm A}_{y_0}^2={\rm A}_{y_0}\in\mathrm{Sym}(3)$ and $X{\rm A}_{y_0}=X$, we have 
	\begin{align*}
	\bigl\langle  (\id_3-{\rm A}_{y_0}) \,X ,  {\rm A}_{y_0} \,X\bigr\rangle = \bigl\langle  ({\rm A}_{y_0}-{\rm A}_{y_0}^2) \,X ,  \,X\bigr\rangle =0,
	\end{align*}
	but also
	\begin{align}
	(\id_3-{\rm A}_{y_0}) \,X^T=\big(X(\id_3-{\rm A}_{y_0})\big)^T=\big(X-X{\rm A}_{y_0}\big)^T=0,
	\end{align}
	and consequently
	\begin{align*}
	\bigl\langle  X^T (\id_3-{\rm A}_{y_0}) , {\rm A}_{y_0} \,X\bigr\rangle  = 0 \qquad \text{as well as} \qquad
	\bigl\langle  X^T (\id_3-{\rm A}_{y_0}) ,(\id_3-{\rm A}_{y_0}) \,X\bigr\rangle  = 0.
	\end{align*}
	In addition,  since 
	${\rm A}_{y_0}\,=\,\id_3-(0|0|n_0)\,(0|0|n_0)^T=\,\id_3-n_0\otimes n_0$,  the following equalities holds
	\begin{align}\label{EKperp}
	\lVert (\id_3-{\rm A}_{y_0})\,X\rVert^2&= \bigl\langle  \,X,(\id_3-{\rm A}_{y_0})^2\,X\bigr\rangle = \bigl\langle  \,X,(\id_3-{\rm A}_{y_0})\,X\bigr\rangle = \bigl\langle  \,X,(0|0|n_0)\,(0|0|n_0)^T\,X\bigr\rangle \notag\\&= \bigl\langle  \,(0|0|n_0)^T X,(0|0|n_0)^T\,X\bigr\rangle =\lVert X\,(0|0|n_0)^T\rVert^2
	=\lVert X^T\,(0|0|n_0)\rVert^2=\lVert X^T\,n_0\rVert^2.
	\end{align}

	We have the following decomposition %for $(\overline{Q}_{e}^{\natural,T}c-n_0)$,
	\begin{align}
	(\overline{Q}_{e}^{\natural,T}d^*-n_0)&=\id_3(\overline{Q}_{e}^{\natural,T}d^*-n_0)=(A_{y_0}+n_0\otimes n_0)(\overline{Q}_{e}^{\natural,T}d^*-n_0)\notag\\
	&= A_{y_0}(\overline{Q}_{e}^{\natural,T}d^*-n_0)+n_0\otimes n_0(\overline{Q}_{e}^{\natural,T}d^*-n_0).
	\end{align}
	By using that 
	\begin{align}
	n_0\otimes n_0 (\overline{Q}_{e}^{\natural,T}d^*-n_0)=n_0\iprod{n_0,(\overline{Q}_{e}^{\natural,T}d^*-n_0)}=\iprod{(\overline{Q}_{e}^{\natural,T}d^*-n_0),n_0}n_0=(\overline{Q}_{e}^{\natural,T}d^*-n_0) n_0\otimes n_0,
	\end{align}
	and with (\ref{result}), we get
	\begin{align}
	\nonumber(\mu\,+\mu_c\,)A_{y_0}(\overline{Q}_{e}^{\natural,T}d^*-n_0)+(\mu\,&+\mu_c\,)n_0\otimes n_0(\overline{Q}_{e}^{\natural,T}d^*-n_0)+(\mu\,-\mu_c\,)n_0\otimes n_0(\overline{Q}_{e}^{\natural,T}d^*-n_0)\\
	\quad&+\lambda\, n_0\otimes n_0(\overline{Q}_{e}^{\natural,T}d^*-n_0)=-\Big[(\mu\,-\mu_c\,) \mathcal{E}^T_{\varphi^\natural,\overline{Q}_{e}^{\natural} } n_0+\lambda\tr(  \mathcal{E}_{\varphi^\natural,\overline{Q}_{e}^{\natural} } )n_0\Big].
	\end{align}
	Therefore,
	\begin{align}\label{invers}
	\Big((\mu\,&+\mu_c\,)A_{y_0}+(2\,\mu\,+\lambda)n_0\otimes n_0\Big)(\overline{Q}_{e}^{\natural,T}d^*-n_0)=-\Big[(\mu\,-\mu_c\,)  \mathcal{E}^T_{\varphi^\natural,\overline{Q}_{e}^{\natural} } n_0+\lambda\tr(  \mathcal{E}_{\varphi^\natural,\overline{Q}_{e}^{\natural} } )n_0\Big].
	\end{align}
	Direct calculation shows
	\begin{align}
	\Big((\mu\,+\mu_c\,)A_{y_0}+(2\,\mu\,+\lambda)n_0\otimes n_0\Big)^{-1}:=\Big(\frac{1}{\mu\,+\mu_c\,}A_{y_0}+\frac{1}{2\,\mu\,+\lambda}n_0\otimes n_0\Big)\,.
	\end{align}
	Next, by using
	\begin{align}
	A_{y_0}n_0&=(\id_3-n_0\otimes n_0)n_0=n_0-n_0\iprod{n_0,n_0}=n_0-n_0=0,\notag\\
	n_0\otimes n_0  \,\mathcal{E}^T_{\varphi^\natural,\overline{Q}_{e}^{\natural} }  n_0 &= (0|0|n_0)(0|0|n_0)^T  \mathcal{E}^T_{\varphi^\natural,\overline{Q}_{e}^{\natural} }  n_0=(0|0|n_0)\Big((\overline{Q}_{e}^{\natural,T}\nabla_{(\eta_1,\eta_2)} \varphi^\natural-(\nabla y_0)^\natural |0)[(\nabla_x\Theta)^\natural]^{-1}(0|0|n_0)\Big)^T n_0\notag\\
	&=(0|0|n_0)\Big((\overline{Q}_{e}^{\natural,T}\nabla_{(\eta_1,\eta_2)} \varphi^\natural-(\nabla y_0)^\natural |0)(0|0|e_3)\Big)^T n_0=0\,,
	\end{align}
	eq. (\ref{invers}) can be written as
	\begin{align}\label{unknown}
	\overline{Q}_{e}^{\natural,T}d^*-n_0&=-\Big[\frac{1}{\mu\,+\mu_c\,}A_{y_0}+\frac{1}{2\,\mu\,+\lambda}n_0\otimes n_0\Big]\times\Big[(\mu\,-\mu_c\,)  \mathcal{E}^T_{\varphi^\natural,\overline{Q}_{e}^{\natural} } n_0+\lambda\tr(  \mathcal{E}_{\varphi^\natural,\overline{Q}_{e}^{\natural} } )n_0\Big]\\
	\nonumber&=-\Big[\frac{\mu\,-\mu_c\,}{\mu\,+\mu_c\,}A_{y_0}  \mathcal{E}^T_{\varphi^\natural,\overline{Q}_{e}^{\natural} } n_0+\frac{\mu\,-\mu_c\,}{2\,\mu\,+\lambda}\,n_0\otimes n_0\, \mathcal{E}^T_{\varphi^\natural,\overline{Q}_{e}^{\natural} } n_0+\frac{\lambda}{\mu\,+\mu_c\,}\tr(  \mathcal{E}_{\varphi^\natural,\overline{Q}_{e}^{\natural} } )A_{y_0}n_0\\
	\nonumber&\quad\qquad+\frac{\lambda}{2\,\mu\,+\lambda}\tr(  \mathcal{E}_{\varphi^\natural,\overline{Q}_{e}^{\natural} } )(n_0\otimes n_0) n_0\Big]=-\Big[\frac{\mu\,-\mu_c\,}{\mu\,+\mu_c\,}A_{y_0}  \mathcal{E}^T_{\varphi^\natural,\overline{Q}_{e}^{\natural} } n_0+\frac{\lambda}{2\,\mu\,+\lambda}\tr(  \mathcal{E}_{\varphi^\natural,\overline{Q}_{e}^{\natural} } )n_0\Big]\,.
	\end{align}

	Simplifying (\ref{unknown}) we obtain
	\begin{align}
	\nonumber d^*&=\Big(1-\frac{\lambda}{2\,\mu\,+\lambda}\iprod{  \mathcal{E}_{\varphi^\natural,\overline{Q}_{e}^{\natural} } ,\id_3}\Big) \overline{Q}_{e}^\natural n_0+\frac{\mu_c\,-\mu\,}{\mu_c\,+\mu\,}\;\overline{Q}_{e}^\natural  \mathcal{E}_{\varphi^\natural,\overline{Q}_{e}^{\natural} }^T n_0.
	\end{align}
	In terms of $\overline{Q}_{e}^\natural=\overline{R}^\natural Q_0^{\natural,T}$ we obtain the following expression for $d^*$
	\begin{align}\label{formula ba}
	\nonumber d^*&=\Big(1-\frac{\lambda}{2\,\mu\,+\lambda}\iprod{(Q_0^{\natural}\overline{R}^{\natural,T}\nabla_{(\eta_1,\eta_2)} \varphi^\natural-(\nabla y_0)^\natural |0)[(\nabla_x\Theta)^\natural]^{-1},\id_3}\Big) \overline{R}^{\natural}Q_0^{\natural,T}n_0\\
	&\qquad+\frac{\mu_c\,-\mu\,}{\mu_c\,+\mu\,}\;\overline{R}^{\natural}Q_0^{\natural,T}\Big((Q_0^{\natural}\overline{R}^{\natural,T}\nabla_{(\eta_1,\eta_2)} \varphi^\natural-(\nabla y_0)^\natural |0)[(\nabla_x\Theta)^\natural]^{-1}\Big)^Tn_0.
	\end{align}

	\subsection{Calculations for the $T_{\text{Biot}}$ stress }
	Here we present the lengthy calculation related to the $T_{\text{Biot}}$ stress tensor in expression (\ref{Tbiot}). We have
	\begin{align}\label{Tbiotsym}
	\nonumber 2\sym (\overline{U}_{\varphi^\natural,\overline{Q}_{e}^{\natural},d^*}&-\id_3)n_0= \Big(2\sym(  \mathcal{E}_{\varphi^\natural,\overline{Q}_{e}^{\natural} } )+2\sym((0|0|\overline{Q}_{e}^{\natural,T}d^*-n_0)[(\nabla_x\Theta)^\natural]^{-1})\Big)n_0\\
	\nonumber&= \Big(  \mathcal{E}_{\varphi^\natural,\overline{Q}_{e}^{\natural} } +  \mathcal{E}^T_{\varphi^\natural,\overline{Q}_{e}^{\natural} } \Big)n_0+\Big((0|0|\overline{Q}_{e}^{\natural,T}d^*-n_0)[(\nabla_x\Theta)^\natural]^{-1}+[(\nabla_x\Theta)^\natural]^{-T}(0|0|\overline{Q}_{e}^{\natural,T}d^*-n_0)^T\Big)n_0\\
	\nonumber&=\underbrace{  \mathcal{E}_{\varphi^\natural,\overline{Q}_{e}^{\natural} }  n_0}_{=0}+  \mathcal{E}^T_{\varphi^\natural,\overline{Q}_{e}^{\natural} } n_0+(0|0|\overline{Q}_{e}^{\natural,T}d^*-n_0)[(\nabla_x\Theta)^\natural]^{-1}n_0+[(\nabla_x\Theta)^\natural]^{-T}(0|0|\overline{Q}_{e}^{\natural,T}d^*-n_0)^Tn_0\\
	&=  \mathcal{E}^T_{\varphi^\natural,\overline{Q}_{e}^{\natural} } n_0+(0|0|\overline{Q}_{e}^{\natural,T}d^*-n_0)e_3+[(\nabla_x\Theta)^\natural]^{-T}(0|0|\overline{Q}_{e}^{\natural,T}d^*-n_0)^Tn_0\\
	\nonumber&=  \mathcal{E}^T_{\varphi^\natural,\overline{Q}_{e}^{\natural} } n_0+(\overline{Q}_{e}^{\natural,T}d^*-n_0)+[(\nabla_x\Theta)^\natural]^{-T}(0|0|\overline{Q}_{e}^{\natural,T}d^*-n_0)^Tn_0,
	\end{align}
	and
	\begin{align}\label{Tbiotskew}
	\nonumber 2\skew (\overline{U}_{\varphi^\natural,\overline{Q}_{e}^{\natural},d^*}&-\id_3)n_0= \Big(2\skew(  \mathcal{E}_{\varphi^\natural,\overline{Q}_{e}^{\natural} } )+2\skew((0|0|\overline{Q}_{e}^{\natural,T}d^*-n_0)[(\nabla_x\Theta)^\natural]^{-1})\Big)n_0\\
	\nonumber&= \Big(  \mathcal{E}_{\varphi^\natural,\overline{Q}_{e}^{\natural} } -  \mathcal{E}^T_{\varphi^\natural,\overline{Q}_{e}^{\natural} } \Big)n_0+\Big((0|0|\overline{Q}_{e}^{\natural,T}d^*-n_0)[(\nabla_x\Theta)^\natural]^{-1}-[(\nabla_x\Theta)^\natural]^{-T}(0|0|\overline{Q}_{e}^{\natural,T}d^*-n_0)^T\Big)n_0\\
	&=-  \mathcal{E}^T_{\varphi^\natural,\overline{Q}_{e}^{\natural} } n_0+(\overline{Q}_{e}^{\natural,T}d^*-n_0)-[(\nabla_x\Theta)^\natural]^{-T}(0|0|\overline{Q}_{e}^{\natural,T}d^*-n_0)^Tn_0.
	\end{align}
	Calculating the trace of $T_{\text{Biot}}$ gives 
	\begin{align}\label{Tbiottra}
	\nonumber\tr(\sym (\overline{U}_{\varphi^\natural,\overline{Q}_{e}^{\natural},d^*}-\id_3))n_0&=\iprod{\sym(\overline{U}_{\varphi^\natural,\overline{Q}_{e}^{\natural},d^*}-\id_3),\id_3}n_0
	%\nonumber&=\Big(\tr\Big(\sym((\overline{Q}_{e}^{\natural,T}\nabla_{(\eta_1,\eta_2)} \varphi^\natural-[\nabla y_0]^\natural |0)[\nabla_x\Theta]^{-1})+\sym((0|0|\overline{Q}_{e}^{\natural,T}c-n_0)[\nabla_x\Theta]^{-1})\Big)\Big)n_0\\
	=\Big(\iprod{  \mathcal{E}_{\varphi^\natural,\overline{Q}_{e}^{\natural} } ,\id_3}+\iprod{(0|0|\overline{Q}_{e}^{\natural,T}d^*-n_0)[(\nabla_x\Theta)^\natural]^{-1},\id_3}\Big)n_0\\
	&=\iprod{  \mathcal{E}_{\varphi^\natural,\overline{Q}_{e}^{\natural} } ,\id_3}n_0+(\overline{Q}_{e}^{\natural,T}d^*-n_0)n_0\otimes n_0,
	\end{align}
	where we have used that $\iprod{(0|0|\overline{Q}_{e}^{\natural,T}d^*-n_0)[(\nabla_x\Theta)^\natural]^{-1},\id_3}_{\R^{3\times 3}}\,n_0%&=\iprod{(0|0|\overline{Q}_{e}^{\natural,T}c-n_0),[\nabla\Theta]^{-T}}_{\R^{3\times 3}}\,n_0\\
	=\iprod{(\overline{Q}_{e}^{\natural,T}d^*-n_0),n_0}_{\R^3}\,n_0=(\overline{Q}_{e}^{\natural,T}d^*-n_0)\.n_0\otimes n_0$.
		
	\subsection{Calculations for the homogenized membrane energy}
	In this part we do the calculations for obtaining the minimizer separately. By inserting $d^*$ in the membrane part of the relation (\ref{boun co}), we have% Also we will have the following replacement for making the terms shorter:
	%$$  \mathcal{E}_{\varphi^\natural,\overline{Q}_{e}^{\natural} } =(\overline{Q}_{e,0}\nabla_{(\eta_1,\eta_2)} \varphi^\natural-[\nabla y_0]^\natural |0)[\nabla_x\Theta]^{-1}. $$ Then,
	\begin{align}\label{reduced}
	\nonumber \norm{\sym (\UHN-\id_3)}^2&=\norm{\sym(\overline{Q}_{e}^{\natural,T}(\nabla_{(\eta_1,\eta_2)} \varphi^\natural|d^*)[(\nabla_x\Theta)^\natural ]^{-1}-\id_3)}^2\\\notag &=\lVert\sym\Big(\underbrace{\overline{Q}_{e}^{\natural,T}(\nabla_{(\eta_1,\eta_2)} \varphi^\natural-[\nabla y_0]^\natural |0)[(\nabla_x\Theta)^\natural ]^{-1}}_{=  \mathcal{E}_{\varphi^\natural,\overline{Q}_{e}^{\natural} } } +(0|0|\overline{Q}_{e}^{\natural,T}d^*-n_0)[(\nabla_x\Theta)^\natural ]^{-1}\Big))\rVert^2\\
	\nonumber&=\norm{\sym  \mathcal{E}_{\varphi^\natural,\overline{Q}_{e}^{\natural} } }^2+ \norm{\sym((0|0|\overline{Q}_{e}^{\natural,T}d^*-n_0)[(\nabla_x\Theta)^\natural ]^{-1})}^2\\
	\nonumber &\quad+2\dyniprod{\sym  \mathcal{E}_{\varphi^\natural,\overline{Q}_{e}^{\natural} } ,\sym((0|0|\overline{Q}_{e}^{\natural,T}d^*-n_0)[(\nabla_x\Theta)^\natural ]^{-1})}\\
	\nonumber &= \norm{\sym  \mathcal{E}_{\varphi^\natural,\overline{Q}_{e}^{\natural} } }^2+\norm{\sym\Big(\frac{\mu_c\,-\mu\,}{\mu_c\,+\mu\,} \mathcal{E}_{\varphi^\natural,\overline{Q}_{e}^{\natural} } ^Tn_0\otimes n_0-\frac{\lambda}{2\,\mu\,+\lambda}\tr(  \mathcal{E}_{\varphi^\natural,\overline{Q}_{e}^{\natural} } )n_0\otimes n_0\Big)}^2\\
	&\quad+2\dyniprod{\sym  \mathcal{E}_{\varphi^\natural,\overline{Q}_{e}^{\natural} }  ,\sym\Big(\frac{\mu_c\,-\mu\,}{\mu_c\,+\mu\,} \mathcal{E}_{\varphi^\natural,\overline{Q}_{e}^{\natural} } ^Tn_0\otimes n_0-\frac{\lambda}{2\,\mu\,+\lambda}\tr(  \mathcal{E}_{\varphi^\natural,\overline{Q}_{e}^{\natural} } )n_0\otimes n_0\Big)}.
	\end{align}
\allowdisplaybreaks	We have
	\begin{align}
	\nonumber \norm{\sym\Big(\frac{\mu_c\,-\mu\,}{\mu_c\,+\mu\,}  \mathcal{E}_{\varphi^\natural,\overline{Q}_{e}^{\natural} } ^Tn_0\otimes n_0&-\frac{\lambda}{2\,\mu\,+\lambda}\tr(  \mathcal{E}_{\varphi^\natural,\overline{Q}_{e}^{\natural} } )n_0\otimes n_0\Big)}^2\\
	\nonumber &=\frac{(\mu_c\,-\mu\,)^2}{(\mu_c\,+\mu\,)^2}\norm{\sym(  \mathcal{E}_{\varphi^\natural,\overline{Q}_{e}^{\natural} }^T n_0\otimes n_0)}^2+\frac{\lambda^2}{(2\,\mu\,+\lambda)^2}\tr(  \mathcal{E}_{\varphi^\natural,\overline{Q}_{e}^{\natural} } )^2\norm{n_0\otimes n_0}^2\\
	\nonumber &\quad-2\,\frac{\mu_c\,-\mu\,}{\mu_c\,+\mu\,}\;\frac{\lambda}{2\,\mu\,+\lambda}\tr(  \mathcal{E}_{\varphi^\natural,\overline{Q}_{e}^{\natural} } )\dyniprod{\sym(  \mathcal{E}_{\varphi^\natural,\overline{Q}_{e}^{\natural} } ^Tn_0\otimes n_0),n_0\otimes n_0}\\
	\nonumber &= \frac{(\mu_c\,-\mu\,)^2}{(\mu_c\,+\mu\,)^2}\dyniprod{\sym(  \mathcal{E}_{\varphi^\natural,\overline{Q}_{e}^{\natural} }^T n_0\otimes n_0),\sym(  \mathcal{E}_{\varphi^\natural,\overline{Q}_{e}^{\natural} }^T n_0\otimes n_0)}+\frac{\lambda^2}{(2\,\mu\,+\lambda)^2}\tr(  \mathcal{E}_{\varphi^\natural,\overline{Q}_{e}^{\natural} } )^2\\\notag
	&\quad-\frac{\mu_c\,-\mu\,}{\mu_c\,+\mu\,}\;\frac{\lambda}{2\,\mu\,+\lambda}\tr(  \mathcal{E}_{\varphi^\natural,\overline{Q}_{e}^{\natural} } )\dyniprod{  \mathcal{E}_{\varphi^\natural,\overline{Q}_{e}^{\natural} }^T n_0\otimes n_0,n_0\otimes n_0}\\&\quad -\frac{\mu_c\,-\mu\,}{\mu_c\,+\mu\,}\;\frac{\lambda}{2\,\mu\,+\lambda}\tr(  \mathcal{E}_{\varphi^\natural,\overline{Q}_{e}^{\natural} } )\dyniprod{n_0\otimes n_0\,  \mathcal{E}_{\varphi^\natural,\overline{Q}_{e}^{\natural} } ,n_0\otimes n_0}\\
	\nonumber &= \frac{(\mu_c\,-\mu\,)^2}{4(\mu_c\,+\mu\,)^2}\iprod{  \mathcal{E}_{\varphi^\natural,\overline{Q}_{e}^{\natural} }^T n_0\otimes n_0,  \mathcal{E}_{\varphi^\natural,\overline{Q}_{e}^{\natural} }^T n_0\otimes n_0}+\frac{(\mu_c\,-\mu\,)^2}{4(\mu_c\,+\mu\,)^2}\iprod{  \mathcal{E}_{\varphi^\natural,\overline{Q}_{e}^{\natural} }^T n_0\otimes n_0,n_0\otimes n_0 \, \mathcal{E}_{\varphi^\natural,\overline{Q}_{e}^{\natural} } }\\
	\nonumber &\quad+\frac{(\mu_c\,-\mu\,)^2}{4(\mu_c\,+\mu\,)^2}\iprod{n_0\otimes n_0\,  \mathcal{E}_{\varphi^\natural,\overline{Q}_{e}^{\natural} } ,  \mathcal{E}_{\varphi^\natural,\overline{Q}_{e}^{\natural} }^T n_0\otimes n_0}+\frac{(\mu_c\,-\mu\,)^2}{4(\mu_c\,+\mu\,)^2}\iprod{n_0\otimes n_0\,  \mathcal{E}_{\varphi^\natural,\overline{Q}_{e}^{\natural} } ,n_0\otimes n_0\,  \mathcal{E}_{\varphi^\natural,\overline{Q}_{e}^{\natural} } }\\
	\nonumber &\quad+\frac{\lambda^2}{(2\,\mu\,+\lambda)^2}\tr(  \mathcal{E}_{\varphi^\natural,\overline{Q}_{e}^{\natural} } )^2=\frac{(\mu_c\,-\mu\,)^2}{2(\mu_c\,+\mu\,)^2}\norm{  \mathcal{E}_{\varphi^\natural,\overline{Q}_{e}^{\natural} }^T n_0}^2+\frac{\lambda^2}{(2\,\mu\,+\lambda)^2}\tr(  \mathcal{E}_{\varphi^\natural,\overline{Q}_{e}^{\natural} } )^2.
	\end{align}
	Since, using \eqref{identitatiPI} we have $\iprod{  \mathcal{E}_{\varphi^\natural,\overline{Q}_{e}^{\natural} } ^T n_0\otimes n_0,n_0\otimes n_0}= \iprod{   n_0\otimes n_0,\mathcal{E}_{\varphi^\natural,\overline{Q}_{e}^{\natural} } n_0\otimes n_0}=0$, 
%	\begin{align}
%	\iprod{  \mathcal{E}_{\varphi^\natural,\overline{Q}_{e}^{\natural} } ^T n_0\otimes n_0,n_0\otimes n_0}= \iprod{   n_0\otimes n_0,\mathcal{E}_{\varphi^\natural,\overline{Q}_{e}^{\natural} } n_0\otimes n_0}&%=
%	%\iprod{\big((*|*|0)[\nabla_x\Theta]^{-1}\big)^T n_0\otimes n_0,n_0\otimes n_0}\\
%	%\nonumber &=\iprod{[\nabla_x\Theta]^{-T}(*|*|0)^T(0|0|n_0)(0|0|n_0)^T, (0|0|n_0)(0|0|n_0)^T}\\
%	%\nonumber &=\iprod{(*|*|0)^T(0|0|n_0)(0|0|n_0)^T,\underbrace{[\nabla_x\Theta]^{-1}(0|0|n_0)}_{(0|0|e_3)}(0|0|n_0)^T}\\
%	%\nonumber &=\iprod{(*|*|0)^T(0|0|n_0)(0|0|n_0)^T,(0|0|e_3)(0|0|n_0)^T}\\
%	%\nonumber &=\iprod{(*|*|0)^T(0|0|n_0)(0|0|n_0)^T,(0|0|n_0)^T}\\
%	%\nonumber %&=\iprod{(*|*|0)^T(0|0|n_0)\underbrace{(0|0|n_0)^T(0|0|n_0)}_{(0|0|e_3)},\id_3}\\
%	%\nonumber &=\dyniprod{\matr{0&0&*\\0&0&*\\0&0&0}(0|0|e_3),\id_3}
%	%=\dyniprod{\matr{0&0&*\\0&0&*\\0&0&0},\id_3}
%	%
%	=0\,,
%	\end{align}
	and since we have used the matrix expression $  \mathcal{E}_{\varphi^\natural,\overline{Q}_{e}^{\natural} } =(*|*|0)[(\nabla_x\Theta)^\natural ]^{-1}$ and $n_0\otimes n_0=(0|0|n_0)[(\nabla_x\Theta)^\natural(0) ]^{-1}$, we deduce
	\begin{align}\label{transpose Ems}
	\nonumber\iprod{  \mathcal{E}_{\varphi^\natural,\overline{Q}_{e}^{\natural} }^T n_0\otimes n_0,  \mathcal{E}_{\varphi^\natural,\overline{Q}_{e}^{\natural} }^T n_0\otimes n_0}\notag&=\bigl\langle  \mathcal{E}_{\varphi^\natural,\overline{Q}_{e}^{\natural} }^T (0|0|n_0)[(\nabla_x\Theta)^\natural(0) ]^{-1},   \mathcal{E}_{\varphi^\natural,\overline{Q}_{e}^{\natural} }^T (0|0|n_0)[(\nabla_x\Theta)^\natural(0) ]^{-1}\bigr\rangle\\
	\nonumber&=\bigl\langle(0|0|  \mathcal{E}_{\varphi^\natural,\overline{Q}_{e}^{\natural} }^T n_0)^T(0|0|  \mathcal{E}_{\varphi^\natural,\overline{Q}_{e}^{\natural} }^T n_0), [(\nabla_x\Theta)^\natural(0) ]^{-1}[(\nabla_x\Theta)^\natural(0) ]^{-T}\bigr\rangle\\
	&=\iprod{(0|0|  \mathcal{E}_{\varphi^\natural,\overline{Q}_{e}^{\natural} }^T n_0)^T(0|0|  \mathcal{E}_{\varphi^\natural,\overline{Q}_{e}^{\natural} }^T n_0),(\widehat{\rm I}_{y_0})^{-1}}\\
	&=\dyniprod{\begin{pmatrix}
		0&0&0\\0&0&0\\&  \mathcal{E}_{\varphi^\natural,\overline{Q}_{e}^{\natural} }^T n_0
		\end{pmatrix}(0|0|  \mathcal{E}_{\varphi^\natural,\overline{Q}_{e}^{\natural} }^T n_0),\begin{pmatrix}
		*&*&0\\ *&*&0\\0&0&1
		\end{pmatrix}}=\iprod{  \mathcal{E}_{\varphi^\natural,\overline{Q}_{e}^{\natural} }^T n_0,  \mathcal{E}_{\varphi^\natural,\overline{Q}_{e}^{\natural} }^T n_0}=\norm{  \mathcal{E}_{\varphi^\natural,\overline{Q}_{e}^{\natural} }^T n_0}^2.\notag
	\end{align}
	On the other hand,
	\begin{align}
	%2&\bigl\langle\sym((\overline{Q}_{e}^{\natural,T}\nabla_{(\eta_1,\eta_2)} \varphi^\natural-[\nabla y_0]^\natural |0)[(\nabla_x\Theta)^\natural ]^{-1}), \sym((0|0|\overline{Q}_{e}^{\natural,T}c^*-n_0)[(\nabla_x\Theta)^\natural ]^{-1})\bigr\rangle\notag\\
	2&\dyniprod{\sym  \mathcal{E}_{\varphi^\natural,\overline{Q}_{e}^{\natural} } ,\sym(\frac{\mu_c\,-\mu\,}{\mu_c\,+\mu\,}\mathcal{E}_{\varphi^\natural,\overline{Q}_{e}^{\natural} }^Tn_0\otimes n_0-\frac{\lambda}{2\,\mu\,+\lambda}\tr(  \mathcal{E}_{\varphi^\natural,\overline{Q}_{e}^{\natural} } )n_0\otimes n_0)}\notag\\
	&=\frac{1}{2}\dyniprod{  \mathcal{E}_{\varphi^\natural,\overline{Q}_{e}^{\natural} } +  \mathcal{E}_{\varphi^\natural,\overline{Q}_{e}^{\natural} }^T,\frac{\mu_c\,-\mu\,}{\mu_c\,+\mu\,}\mathcal{E}_{\varphi^\natural,\overline{Q}_{e}^{\natural} }^Tn_0\otimes n_0+\frac{\mu_c\,-\mu\,}{\mu_c\,+\mu\,}n_0\otimes n_0\;  \mathcal{E}_{\varphi^\natural,\overline{Q}_{e}^{\natural} } -\frac{2\lambda}{2\,\mu\,+\lambda}\tr(  \mathcal{E}_{\varphi^\natural,\overline{Q}_{e}^{\natural} } )n_0\otimes n_0}\notag\\
	&=\frac{\mu_c\,-\mu\,}{2(\mu_c\,+\mu\,)}\dyniprod{  \mathcal{E}_{\varphi^\natural,\overline{Q}_{e}^{\natural} } ,  \mathcal{E}_{\varphi^\natural,\overline{Q}_{e}^{\natural} }^T n_0\otimes n_0}+\frac{\mu_c\,-\mu\,}{2(\mu_c\,+\mu\,)}\dyniprod{  \mathcal{E}_{\varphi^\natural,\overline{Q}_{e}^{\natural} } ,n_0\otimes n_0\;  \mathcal{E}_{\varphi^\natural,\overline{Q}_{e}^{\natural} } }\\
	&\quad-\frac{\lambda}{(2\,\mu\,+\lambda)}\tr(  \mathcal{E}_{\varphi^\natural,\overline{Q}_{e}^{\natural} } )\iprod{  \mathcal{E}_{\varphi^\natural,\overline{Q}_{e}^{\natural} } ,n_0\otimes n_0}+\frac{\mu_c\,-\mu\,}{2(\mu_c\,+\mu\,)}\dyniprod{  \mathcal{E}_{\varphi^\natural,\overline{Q}_{e}^{\natural} }^T,  \mathcal{E}_{\varphi^\natural,\overline{Q}_{e}^{\natural} }^T n_0\otimes n_0}\notag\\
	&\quad+\frac{\mu_c\,-\mu\,}{2(\mu_c\,+\mu\,)}\dyniprod{  \mathcal{E}_{\varphi^\natural,\overline{Q}_{e}^{\natural} }^T,n_0\otimes n_0\,  \mathcal{E}_{\varphi^\natural,\overline{Q}_{e}^{\natural} } }-\frac{\lambda}{(2\,\mu\,+\lambda)}\tr(  \mathcal{E}_{\varphi^\natural,\overline{Q}_{e}^{\natural} } )\iprod{  \mathcal{E}_{\varphi^\natural,\overline{Q}_{e}^{\natural} }^T,n_0\otimes n_0}=\frac{\mu_c\,-\mu\,}{\mu_c\,+\mu\,}\norm{  \mathcal{E}_{\varphi^\natural,\overline{Q}_{e}^{\natural} }^T n_0}^2,\notag
	\end{align}
	due to \eqref{EKperp}.
	Therefore, \eqref{reduced} can be reduced to
	\begin{align}\label{symter}
	\norm{\sym(\overline{Q}_{e}^{\natural,T}&(\nabla_{(\eta_1,\eta_2)} \varphi^\natural|c)[(\nabla_x\Theta)^\natural ]^{-1}-\id_3)}^2\\&=\norm{\sym  \mathcal{E}_{\varphi^\natural,\overline{Q}_{e}^{\natural} } }^2+\frac{(\mu_c\,-\mu\,)^2}{2(\mu_c\,+\mu\,)^2}\norm{  \mathcal{E}_{\varphi^\natural,\overline{Q}_{e}^{\natural} }^T n_0}^2+\frac{\lambda^2}{(2\,\mu\,+\lambda)^2}\tr(  \mathcal{E}_{\varphi^\natural,\overline{Q}_{e}^{\natural} } )^2+\frac{\mu_c\,-\mu\,}{(\mu_c\,+\mu\,)}\norm{  \mathcal{E}_{\varphi^\natural,\overline{Q}_{e}^{\natural} }^T n_0}^2.\notag
	\end{align}

	Now we continue the calculations for the skew symmetric part,
	\begin{align}\label{skewap}
	\nonumber\norm{\skew(\overline{Q}_{e}^{\natural,T}(\nabla_{(\eta_1,\eta_2)} \varphi^\natural|d^*)[(\nabla_x\Theta)^\natural ]^{-1})}^2\notag &=\norm{\skew(\overline{Q}_{e}^{\natural,T}(\nabla_{(\eta_1,\eta_2)} \varphi^\natural|0)[(\nabla_x\Theta)^\natural ]^{-1})}^2\notag + \norm{\skew((0|0|\overline{Q}_{e}^{\natural,T}d^*)[(\nabla_x\Theta)^\natural ]^{-1})}^2\\
	&\quad+2\bigl\langle\skew(\overline{Q}_{e}^{\natural,T}(\nabla_{(\eta_1,\eta_2)} \varphi^\natural|0)[(\nabla_x\Theta)^\natural ]^{-1}), \skew((0|0|\overline{Q}_{e}^{\natural,T}d^*)[(\nabla_x\Theta)^\natural ]^{-1})\bigr\rangle.
	\end{align}
	In a similar manner, we calculate the terms separately. Since $n_0\otimes n_0$ is symmetric, we obtain
	\begin{align}
	\norm{\skew((0|0|\overline{Q}_{e}^{\natural,T}d^*)[(\nabla_x\Theta)^\natural ]^{-1})}^2\notag&=\norm{\skew(n_0\otimes n_0+\frac{\mu_c\,-\mu\,}{\mu_c\,+\mu\,}  \mathcal{E}_{\varphi^\natural,\overline{Q}_{e}^{\natural} }^T \, n_0\otimes n_0-\frac{\lambda}{2\,\mu\,+\lambda}\tr(  \mathcal{E}_{\varphi^\natural,\overline{Q}_{e}^{\natural} }^T )\, n_0\otimes n_0)}^2\\
	&=\frac{(\mu_c\,-\mu\,)^2}{(\mu_c\,+\mu\,)^2}\norm{\skew(  \mathcal{E}_{\varphi^\natural,\overline{Q}_{e}^{\natural} }^T\, n_0\otimes n_0)}^2.\notag
	\end{align}
	But, we have
	\begin{align}
	\norm{\skew(  \mathcal{E}_{\varphi^\natural,\overline{Q}_{e}^{\natural} } ^Tn_0\otimes n_0)}^2&=\frac{1}{4}\dyniprod{  \mathcal{E}_{\varphi^\natural,\overline{Q}_{e}^{\natural} }^T n_0\otimes n_0,  \mathcal{E}_{\varphi^\natural,\overline{Q}_{e}^{\natural} }^T\, n_0\otimes n_0}-\frac{1}{4}\dyniprod{  \mathcal{E}_{\varphi^\natural,\overline{Q}_{e}^{\natural} }^T n_0\otimes n_0,n_0\otimes n_0 \, \mathcal{E}_{\varphi^\natural,\overline{Q}_{e}^{\natural} } }\\
	&\quad-\frac{1}{4}\dyniprod{n_0\otimes n_0 \, \mathcal{E}_{\varphi^\natural,\overline{Q}_{e}^{\natural} } ,  \mathcal{E}_{\varphi^\natural,\overline{Q}_{e}^{\natural} }^T\, n_0\otimes n_0}+\frac{1}{4}\dyniprod{n_0\otimes n_0\,  \mathcal{E}_{\varphi^\natural,\overline{Q}_{e}^{\natural} } ,n_0\otimes n_0 \, \mathcal{E}_{\varphi^\natural,\overline{Q}_{e}^{\natural} } }=\frac{1}{2}\norm{  \mathcal{E}_{\varphi^\natural,\overline{Q}_{e}^{\natural} }^T n_0}^2\,,\notag
	\end{align}
	where we used the fact that $(n_0\otimes n_0)^2=(n_0\otimes n_0)$. The difficulty in the skew symmetric part of (\ref{skewap}) is solved in the following calculation
	\begin{align}\label{appskew}
	\nonumber 2\bigl\langle\skew(\overline{Q}_{e}^{\natural,T}(\nabla_{(\eta_1,\eta_2)} \varphi^\natural|0)&[(\nabla_x\Theta)^\natural ]^{-1}), \skew((0|0|\overline{Q}_{e}^{\natural,T}d^*)[(\nabla_x\Theta)^\natural ]^{-1})\bigr\rangle\\
	\nonumber&= 2\,\frac{(\mu_c\,-\mu\,)}{(\mu_c\,+\mu\,)}\dyniprod{\skew(\overline{Q}_{e}^{\natural,T}(\nabla_{(\eta_1,\eta_2)} \varphi^\natural|0)[(\nabla_x\Theta)^\natural ]^{-1}),\skew(  \mathcal{E}_{\varphi^\natural,\overline{Q}_{e}^{\natural} }^T n_0\otimes n_0)}\\
	&=\frac{(\mu_c\,-\mu\,)}{2(\mu_c\,+\mu\,)}\iprod{\overline{Q}_{e}^{\natural,T}(\nabla_{(\eta_1,\eta_2)} \varphi^\natural|0)[(\nabla_x\Theta)^\natural ]^{-1},  \mathcal{E}_{\varphi^\natural,\overline{Q}_{e}^{\natural} }^T n_0\otimes n_0}\\&\quad \notag-\frac{(\mu_c\,-\mu\,)}{2(\mu_c\,+\mu\,)}\iprod{\overline{Q}_{e}^{\natural,T}(\nabla_{(\eta_1,\eta_2)} \varphi^\natural|0)[(\nabla_x\Theta)^\natural ]^{-1},n_0\otimes n_0\,  \mathcal{E}_{\varphi^\natural,\overline{Q}_{e}^{\natural} }}\\
	\nonumber&\quad-\frac{(\mu_c\,-\mu\,)}{2(\mu_c\,+\mu\,)}\iprod{(\overline{Q}_{e}^{\natural,T}(\nabla_{(\eta_1,\eta_2)} \varphi^\natural|0)[(\nabla_x\Theta)^\natural ]^{-1})^T,  \mathcal{E}_{\varphi^\natural,\overline{Q}_{e}^{\natural} }^T n_0\otimes n_0}\\&\quad +\frac{(\mu_c\,-\mu\,)}{2(\mu_c\,+\mu\,)}\iprod{(\overline{Q}_{e}^{\natural,T}(\nabla_{(\eta_1,\eta_2)} \varphi^\natural|0)[(\nabla_x\Theta)^\natural ]^{-1})^T,n_0\otimes n_0 \, \mathcal{E}_{\varphi^\natural,\overline{Q}_{e}^{\natural} } }=-\frac{(\mu_c\,-\mu\,)}{(\mu_c\,+\mu\,)}\norm{  \mathcal{E}_{\varphi^\natural,\overline{Q}_{e}^{\natural} }^T n_0}^2.\notag
	\end{align}
	Therefore,
	\begin{align}
	2\bigl\langle &\skew(\overline{Q}_{e}^{\natural,T}(\nabla_{(\eta_1,\eta_2)} \varphi^\natural|0)[(\nabla_x\Theta)^\natural ]^{-1}), \skew((0|0|\overline{Q}_{e}^{\natural,T}d^*)[(\nabla_x\Theta)^\natural ]^{-1})\bigr\rangle
	%&= 2\dyniprod{\skew(\overline{Q}_{e}^{\natural,T}(\nabla_{(\eta_1,\eta_2)} \varphi^\natural|0)[(\nabla_x\Theta)^\natural ]^{-1}),\skew(  \mathcal{E}_{\varphi^\natural,\overline{Q}_{e}^{\natural} } n_0\otimes n_0)}\\
	%&=\frac{(\mu_c\,-\mu\,)}{2(\mu_c\,+\mu\,)}\iprod{\overline{Q}_{e}^{\natural,T}(\nabla_{(\eta_1,\eta_2)} \varphi^\natural|0)[(\nabla_x\Theta)^\natural ]^{-1},  \mathcal{E}_{\varphi^\natural,\overline{Q}_{e}^{\natural} } n_0\otimes n_0}-\frac{(\mu_c\,-\mu\,)}{2(\mu_c\,+\mu\,)}\iprod{\overline{Q}_{e}^{\natural,T}(\nabla_{(\eta_1,\eta_2)} \varphi^\natural|0)[(\nabla_x\Theta)^\natural ]^{-1},n_0\otimes n_0  \mathcal{E}_{\varphi^\natural,\overline{Q}_{e}^{\natural} } }\\
	%&\quad-\frac{(\mu_c\,-\mu\,)}{2(\mu_c\,+\mu\,)}\iprod{(\overline{Q}_{e}^{\natural,T}(\nabla_{(\eta_1,\eta_2)} \varphi^\natural|0)[(\nabla_x\Theta)^\natural ]^{-1})^T,  \mathcal{E}_{\varphi^\natural,\overline{Q}_{e}^{\natural} } n_0\otimes n_0}+\frac{(\mu_c\,-\mu\,)}{2(\mu_c\,+\mu\,)}\iprod{(\overline{Q}_{e}^{\natural,T}(\nabla_{(\eta_1,\eta_2)} \varphi^\natural|0)[(\nabla_x\Theta)^\natural ]^{-1})^T,n_0\otimes n_0  \mathcal{E}_{\varphi^\natural,\overline{Q}_{e}^{\natural} } }\\
	=-\frac{(\mu_c\,-\mu\,)}{(\mu_c\,+\mu\,)}\norm{  \mathcal{E}^T_{\varphi^\natural,\overline{Q}_{e}^{\natural} } n_0}^2\,,
	\end{align}
	and we obtain
	\begin{align}\label{skew}
	\nonumber\norm{\skew(\overline{Q}_{e}^{\natural,T}(\nabla_{(\eta_1,\eta_2)} \varphi^\natural|d^*)[(\nabla_x\Theta)^\natural ]^{-1})}^2&=\norm{\skew(\overline{Q}_{e}^{\natural,T}(\nabla_{(\eta_1,\eta_2)} \varphi^\natural|0)[(\nabla_x\Theta)^\natural ]^{-1})}^2 +\frac{(\mu_c\,-\mu\,)^2}{2(\mu_c\,+\mu\,)^2}\norm{  \mathcal{E}^T_{\varphi^\natural,\overline{Q}_{e}^{\natural} } n_0}^2\\
	&\quad-\frac{(\mu_c\,-\mu\,)}{(\mu_c\,+\mu\,)}\norm{  \mathcal{E}^T_{\varphi^\natural,\overline{Q}_{e}^{\natural} } n_0}^2.
	\end{align}
	The last requirement for our calculations, is
	\begin{align}\label{trace}
	\nonumber&\Big[\tr\Big(\sym(\overline{Q}_{e}^{\natural,T}(\nabla_{(\eta_1,\eta_2)} \varphi^\natural|d^*)[(\nabla_x\Theta)^\natural ]^{-1}-\id_3)\Big)\Big]^2\\
	&=\Big(\tr\big(\sym((\overline{Q}_{e}^{\natural,T}\nabla_{(\eta_1,\eta_2)} \varphi^\natural-[\nabla y_0]^\natural |0)[(\nabla_x\Theta)^\natural ]^{-1})\big) +\tr\big(\sym((0|0|\overline{Q}_{e}^{\natural,T}d^*-n_0)[(\nabla_x\Theta)^\natural ]^{-1})\big)\Big)^2\\
	\nonumber&=\Big(\tr(  \mathcal{E}_{\varphi^\natural,\overline{Q}_{e}^{\natural} } )+\frac{(\mu_c\,-\mu\,)}{2(\mu_c\,+\mu\,)}(\iprod{  \mathcal{E}_{\varphi^\natural,\overline{Q}_{e}^{\natural} }^T n_0\otimes n_0,\id_3}+\iprod{n_0\otimes n_0\,  \mathcal{E}_{\varphi^\natural,\overline{Q}_{e}^{\natural} } ,\id_3})-\frac{\lambda}{2\,\mu\,+\lambda}\tr(  \mathcal{E}_{\varphi^\natural,\overline{Q}_{e}^{\natural} } )\underbrace{\iprod{n_0\otimes n_0,\id_3}}_{\iprod{n_0,n_0}=1}\Big)^2\\
	&=\Big(\frac{2\,\mu\,}{2\,\mu\,+\lambda}\tr(  \mathcal{E}_{\varphi^\natural,\overline{Q}_{e}^{\natural} } )+\frac{(\mu_c\,-\mu\,)}{2(\mu_c\,+\mu\,)}(\iprod{  \mathcal{E}_{\varphi^\natural,\overline{Q}_{e}^{\natural} }^T ,n_0\otimes n_0}+\iprod{  \mathcal{E}_{\varphi^\natural,\overline{Q}_{e}^{\natural} } ,n_0\otimes n_0})\Big)^2=\frac{4\mu\,^2}{(2\,\mu\,+\lambda)^2}\tr(  \mathcal{E}_{\varphi^\natural,\overline{Q}_{e}^{\natural} } )^2.\notag
	\end{align}

	\subsection{Homogenized quadratic curvature energy}\label{homcurghiba}
	In \cite{Ghiba2022}, the authors obtained the homogenized curvature energy for the following curvature energy 
	\begin{align}\label{curvenergy}
	W_{\text{curv}}(\Gamma^\natural[(\nabla_x\Theta)^\natural ]^{-1})&=\mu L_c^2\Big(b_1\norm{\sym \Gamma^\natural[(\nabla_x\Theta)^\natural ]^{-1}}^2+b_2\,\norm{\skew\Gamma^\natural[(\nabla_x\Theta)^\natural ]^{-1}}^2+b_3\tr(\Gamma^\natural[(\nabla_x\Theta)^\natural ]^{-1})^2\Big)\,,
	\end{align}
	as
	\begin{align}\label{homocurv}
	\nonumber	W_{\text{curv}}^{\text{hom}}(\mathcal{K}_{e,s})&=\mu L_c^2\Big(b_1\norm{\sym\mathcal{K}_{e,s}}^2+b_2\norm{\skew \mathcal{K}_{e,s}}^2-\frac{(b_1-b_2)^2}{2(b_1+b_2)}\norm{\mathcal{K}_{e,s}^Tn_0}^2+\frac{b_1b_3}{(b_1+b_3)}\tr(\mathcal{K}_{e,s})^2\Big)\\
	\nonumber&=\mu L_c^2\Big(b_1\norm{\sym\mathcal{K}_{e,s}^\parallel}^2+b_2\norm{\skew \mathcal{K}_{e,s}^\parallel}^2-\frac{(b_1-b_2)^2}{2(b_1+b_2)}\norm{\mathcal{K}_{e,s}^Tn_0}^2+\frac{b_1b_3}{(b_1+b_3)}\tr(\mathcal{K}_{e,s}^\parallel)^2+\frac{b_1+b_2}{2}\norm{\mathcal{K}_{e,s}^Tn_0}\Big)\\
	&=\mu L_c^2\Big(b_1\norm{\sym\mathcal{K}_{e,s}^\parallel}^2+b_2\norm{\skew \mathcal{K}_{e,s}^\parallel}^2+\frac{b_1b_3}{(b_1+b_3)}\tr(\mathcal{K}_{e,s}^\parallel)^2+\frac{2b_1b_2}{b_1+b_2}\norm{\mathcal{K}_{e,s}^\perp}\Big)\,,
	\end{align}
	where 
	$\mathcal{K}_{e,s}=(\Gamma_1|\Gamma_2|0)[(\nabla_x\Theta)^\natural ]^{-1}$ with the decomposition 
	\begin{align}
	X=X^\parallel+X^\perp, \qquad  \qquad \qquad X^\parallel\coloneqq{\rm A}_{y_0} \,X,  \qquad \qquad \qquad X^\perp\coloneqq(\id_3-{\rm A}_{y_0}) \,X,
	\end{align}
	for every matrix $X$.

\end{appendix}
\end{document}